\documentclass{amsart}
\usepackage{geometry} 

\usepackage{tikz}
\usetikzlibrary{arrows,positioning} 
\usepackage{tikz-cd}
\usepackage[utf8]{inputenc}
\usepackage[english]{babel}
\usepackage{graphicx}
\usepackage{amssymb}
\usepackage{epstopdf}
\usepackage{enumerate}
\usepackage{mathtools}
\usepackage{setspace}
\usepackage{mathrsfs}
\usepackage{graphicx}
\usepackage{amsthm}
\usepackage{fancyhdr}
\usepackage{thmtools}
\numberwithin{equation}{section}
\graphicspath{{./LaTeX_Images/}}
\theoremstyle{plain}
\newtheorem{thm}{Theorem}[section]
\newtheorem{prop}[thm]{Proposition}
\newtheorem{lem}[thm]{Lemma}
\newtheorem{rem}[thm]{Remark}
\newtheorem{cor}[thm]{Corollary} 
\theoremstyle{definition}
\newtheorem{defn}[thm]{Definition}
\newtheorem{exmp}{Example}[section]
\DeclareSymbolFont{myletters}{OML}{ztmcm}{m}{it}
\DeclareMathSymbol{\uplambda}{\mathord}{myletters}{"15} 
\usepackage[unicode=true]{hyperref}
\hypersetup{
    colorlinks=true,
    linkcolor=blue,
    citecolor=purple,
    filecolor=magenta,      
    urlcolor=cyan,
    pdftitle={Stable Pairs on the Projective Plane},
    pdfpagemode=FullScreen,
    }

\title{Stable Wild Vafa-Witten Bundles on the Projective Plane}

\author{Robert J. Cornea}

\address{Department of Pure Mathematics, University of Waterloo, 200 University Ave W, Waterloo, ON N2L 3G1, Canada}

\email{rcornea@uwaterloo.ca}

\thanks{The author would like to thank their supervisor Ruxandra Moraru for the countless helpful discussions and patience, in which without, this work could not have been done. The author would also like to thank Steven Rayan for help and motivation of this work.}

\keywords{Vafa-Witten Bundles, Schwarzenberger Bundles, Spectral Correspondence, Deformation Theory, Moduli Spaces}

\begin{document}

\begin{abstract}
    This work explores the geometry of stable wild Vafa-Witten bundles over the complex projective plane $\mathbb{P}^2$. Specifically, we consider stable rank-two pairs $(E,\Phi)$, with $E\to\mathbb{P}^2$ a rank-two holomorphic vector bundle and $\Phi\in H^0(\mathbb{P}^2,\mathrm{End}_0E\otimes\mathcal{O}(d))$ for $d\geq0$, and compute the dimension of the moduli space of such stable pairs. Moreover, we classify stable pairs $(E,\Phi)$ when the underlying rank-two bundle $E$ splits or is the push-forward of a line bundle on $\mathbb{P}^1\times\mathbb{P}^1$. Lastly, we examine the fixed point locus of the natural $\mathbb{C}^*$-action on the moduli space.
\end{abstract}

\maketitle 
\newcommand{\op}[1]{\operatorname{\boldsymbol{\mathrm{#1}}}}
\newcommand{\R}{\mathbb{R}}
\newcommand{\C}{\mathbb{C}}
\newcommand{\Q}{\mathbb{Q}}
\newcommand{\Z}{\mathbb{Z}}
\newcommand{\N}{\mathbb{N}}
\newcommand{\Id}{\mathrm{Id}}

\section{Introduction}\label{Intro}

In this paper, we study rank-two \textit{stable wild Vafa-Witten bundles} on $\mathbb{P}^2$. Wild Vafa-Witten bundles on a K\"ahler surface $X$ are pairs $(E,\Phi)$ consisting of a holomorphic vector bundle $E$ on $X$ and a holomorphic bundle map $\Phi :E\to E\otimes L$, called a \textit{Higgs field}, for some fixed holomorphic line bundle $L\to X$. When $L=K_X$, the canonical bundle of $X$, these pairs are simply called \textit{Vafa-Witten bundles} and were introduced by Vafa and Witten while studying the $S$-duality conjecture for $\mathcal{N}=4$ supersymmetric Yang-Mills theory \cite{MR1305096}. Such objects extend the notion of Higgs bundles on Riemann surfaces to K\"ahler surfaces, the wild pairs corresponding to the generalized Higgs bundles studied by Bottacin \cite{MR1334607} and Markman \cite{MR1300764}. The terminology wild originated in \cite{MR3731155} where the authors considered $L=K_X(D)$ for some effective divisor $D$ on a Riemann surface $X$, allowing the Higgs field to have poles at the points of $D$, thus resulting in ``wild'' behaviour.

On the complex projective plane $\mathbb{P}^2$, non-trivial stable Vafa-Witten bundles do not exist; to be precise, when $L=K_X$ and $(E,\Phi)$ is stable, one can show that $\Phi=0$ \cite{MR4204283}. Nonetheless, non-trivial stable wild Vafa-Witten bundles do exist on $\mathbb{P}^2$ when $L=\mathcal{O}(d)$ for $d\geq0$ \cite{Moraru}. We analyze rank-two stable pairs $(E,\Phi)$ with trace-free Higgs field $\Phi :E\to E\otimes \mathcal{O}(d)$, $d >0$, and determine the dimension of certain components of the moduli space of stable wild Vafa-Witten pairs. Our approach parallels Rayan's study of co-Higgs bundles on $\mathbb{P}^1$ \cite{MR3158239} and $\mathbb{P}^2$ \cite{MR3285779}.

We begin by analyzing stable wild Vafa-Witten pairs $(E,\Phi)$ in the case where $E$ splits as a direct sum of two line bundles and $\Phi :E\to E\otimes \mathcal{O}(d)$, $d >0$. And for a fixed split bundle $E$, we compute the dimension of the space of Higgs fields on $E$ modulo conjugation; this corresponds to the dimension of the Zariski tangent space to the moduli space at the point $(E,\Phi)$. Schwarzenberger proved that any rank-two holomorphic vector bundle $E\to\mathbb{P}^2$ arises as the push-forward of a line bundle on a double cover $f: Y^\ell\to\mathbb{P}^2$ branched over a curve of degree $2\ell$ \cite{MR0137712}; these are known as \textit{Schwarzenberger bundles} of type $\ell$. We consider Schwarzenberger bundles of type $\ell=1$ and $\ell=2$.

For fixed $c_1,c_2\in\mathbb{Z}$ and $d \in \mathbb{Z}^{>0}$, we denote $\mathcal{M}_{c_1,c_2,d}^{\mathrm{VW}}$ the moduli space of rank-two stable wild Vafa-Witten bundles $(E,\Phi)$ on $\mathbb{P}^2$ with $c_1(E)=c_1H$, $c_2(E)=c_2H^2$, and $\Phi\in H^0(\mathbb{P}^2,\mathrm{End}_0E\otimes\mathcal{O}(d))$. Note that if $c_1=r+s-1$ and $c_2=\frac{1}{2}(r(r-1)+s(s-1))$ for some $s,r\in\mathbb{Z}$ with $s\geq r$, then $\mathcal{M}^{\mathrm{VW}}_{c_1,c_2,d}$ contains stable pairs $(E,\Phi)$ where $E$ is a Schwarzenberger bundle of type $\ell=1$. We consider moduli spaces with such Chern classes for $d\geq1$.

When $d=1$, we give a complete classification of stable wild Vafa-Witten bundles. Moreover, we have the following: 

\begin{thm}\label{Thm2}
For $r,s\in\mathbb{Z}$ with $s\geq r$, let $c_1=r+s-1$ and $c_2=\frac{1}{2}(r(r-1)+s(s-1))$. The moduli space $\mathcal{M}_{c_1,c_2,1}^{\mathrm{VW}}$ is smooth of complex dimension 6.    
\end{thm}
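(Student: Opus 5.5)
The plan is to use the deformation theory of pairs. I will show that at every point $(E,\Phi)\in\mathcal{M}_{c_1,c_2,1}^{\mathrm{VW}}$ the obstruction space vanishes and the tangent space has dimension exactly $6$. Infinitesimal deformations of $(E,\Phi)$ with $\Phi$ trace-free are governed by the hypercohomology of the two-term complex
\[
C^\bullet:\quad \mathrm{End}_0E \xrightarrow{\;[\,\cdot\,,\Phi]\;} \mathrm{End}_0E\otimes\mathcal{O}(d),
\]
placed in degrees $0$ and $1$. Here $\mathbb{H}^0(C^\bullet)$ is the space of infinitesimal automorphisms, $\mathbb{H}^1(C^\bullet)$ is the Zariski tangent space, and $\mathbb{H}^2(C^\bullet)$ contains the obstructions. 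So it suffices to show $\mathbb{H}^0=\mathbb{H}^2=0$ and to compute $\dim\mathbb{H}^1=-\chi(C^\bullet)$ when $d=1$. Smoothness then follows from unobstructedness, together with the fact that stable pairs have only scalar automorphisms, so the moduli space is locally the Kuranishi space.

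For $\mathbb{H}^0$: it consists of $\psi\in H^0(\mathrm{End}_0E)$ with $[\psi,\Phi]=0$. For a stable pair I will run the usual argument. Take an eigenvalue $\lambda$ of $\psi$, which is constant since $\mathbb{P}^2$ is compact. Then $\ker(\psi-\lambda)$ and $\mathrm{im}(\psi-\lambda)$ are $\Phi$-invariant subsheaves, so pair stability forces $\psi-\lambda=0$. Trace-freeness then gives $\psi=0$.

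For $\mathbb{H}^2$: by Serre duality with $K_{\mathbb{P}^2}=\mathcal{O}(-3)$, and using that $\mathrm{End}_0E$ is self-dual, $\mathbb{H}^2(C^\bullet)^\ast\cong\mathbb{H}^0$ of the dual complex
\[
\mathrm{End}_0E\otimes\mathcal{O}(-d-3)\to\mathrm{End}_0E\otimes\mathcal{O}(-3).
\]
So it consists of $\psi\in H^0(\mathrm{End}_0E(-d-3))$ commuting with $\Phi$. The same argument applies. If $\psi\neq0$, then $\ker\psi$ and the saturation of $\mathrm{im}\,\psi$ are $\Phi$-invariant. Moreover $\psi$ gives a nonzero map from $E/\ker\psi$ into a twist of $\mathrm{im}\,\psi$ by $\mathcal{O}(-d-3)$. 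The stability inequalities for these invariant subsheaves are then incompatible, because the negative twist lowers the slope. Hence $\psi=0$. If $\psi$ is nilpotent, I use $\ker\psi\supseteq\mathrm{im}\,\psi$ together with this slope drop. I expect this case analysis to be the main obstacle. The delicate sub-case is $\Phi=0$ with $E$ stable, and there ordinary slope stability of $E$ suffices, since $\mathrm{Hom}(E,E(-d-3))=0$.

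With both ends vanishing, $\dim\mathbb{H}^1=\chi(\mathrm{End}_0E(d))-\chi(\mathrm{End}_0E)$. By Hirzebruch–Riemann–Roch, $\mathrm{End}_0E$ has rank $3$ and $c_1=0$, so the $c_2$-contributions cancel in the difference. This leaves
\[
\dim\mathbb{H}^1=3\bigl(\chi(\mathcal{O}(d))-\chi(\mathcal{O})\bigr)=3\left(\binom{d+2}{2}-1\right),
\]
which equals $6$ for $d=1$, independently of $r$ and $s$.

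As a consistency check I will compare with the $d=1$ classification via the spectral correspondence. A stable pair with $\Phi\neq0$ has $\det\Phi\in H^0(\mathcal{O}(2))$, which defines a double cover of $\mathbb{P}^2$ branched along a conic, namely $Y^1\cong\mathbb{P}^1\times\mathbb{P}^1$ inside the total space of $\mathcal{O}(1)$. In this picture $E=f_\ast M$ for a line bundle $M$ that is discrete, since $\mathrm{Pic}(\mathbb{P}^1\times\mathbb{P}^1)\cong\mathbb{Z}^2$ and $H^1(\mathcal{O})=0$, and $M$ is pinned down by $c_1,c_2$ through $r,s$. So the pair is parametrized by the conic, i.e.\ by an open subset of the $6$-dimensional space $H^0(\mathcal{O}(2))$, matching the tangent-space count.
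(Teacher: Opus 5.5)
\textbf{The gap is in the vanishing of $\mathbb{H}^2$, and the claimed vanishing is in fact false at some points.}

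You use $\mathbb{H}^2(C^\bullet)^\ast\cong\{\psi\in H^0(\mathrm{End}_0E(-d-3)) : [\psi,\Phi]=0\}$. That is the Serre-duality formula for Higgs bundles on a \emph{curve}. On a surface, $\mathbb{H}^i(C^\bullet)^\ast\cong\mathbb{H}^{2-i}(C^{\bullet\vee}\otimes K)$, where $C^{\bullet\vee}\otimes K$ sits in degrees $-1,0$. So the group you wrote down is $\mathbb{H}^{-1}(C^{\bullet\vee}\otimes K)\cong\mathbb{H}^3(C^\bullet)^\ast$. The hypercohomology sequence of $C^\bullet$ actually gives
\[
0\to\mathrm{coker}\bigl(H^1(\mathrm{End}_0E)\xrightarrow{[\cdot,\Phi]}H^1(\mathrm{End}_0E(d))\bigr)\to\mathbb{H}^2\to\ker\bigl(H^2(\mathrm{End}_0E)\to H^2(\mathrm{End}_0E(d))\bigr)\to0 .
\]
For $d=1$ the right-hand term does vanish. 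Pair stability bounds the Harder--Narasimhan gap of $E$ by $d$, so $\mathrm{Hom}(E,E(-3))=0$. But no slope argument controls the cokernel on $H^1$, and that cokernel need not vanish.

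Your ``delicate sub-case'' is a counterexample. Take $E=E^\rho_{r,r+k}$ with $k\geq4$, which is stable, and $\Phi=0$. Then $\mathbb{H}^2=H^1(\mathrm{End}_0E(1))$ has dimension $k^2-9>0$. Also $\dim\mathbb{H}^1=h^1(\mathrm{End}_0E)+h^0(\mathrm{End}_0E(1))=k^2-3$, by Propositions~\ref{Prop11} and~\ref{3.5}. The condition $\mathrm{Hom}(E,E(-d-3))=0$ that you invoke only kills the dual of $H^2(\mathrm{End}_0E(d))$, which contributes to $\mathbb{H}^3$, not $\mathbb{H}^2$.

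More globally, the pairs $(E,0)$ with $E$ stable form a locus of dimension $4c_2-c_1^2-3=k^2-4>6$. So smoothness of dimension $6$ at \emph{every} point of $\mathcal{M}^{\mathrm{VW}}_{c_1,c_2,1}$ is simply not true for $k\geq4$. What can be proved is the statement at pairs whose Higgs field vanishes nowhere, for instance those with smooth spectral cover. These are the only points the paper's argument actually treats.

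\textbf{The missing ingredient} is surjectivity of $[\cdot,\Phi]\colon H^1(\mathrm{End}_0E)\to H^1(\mathrm{End}_0E(1))$. The paper gets this as follows.
\begin{enumerate}
\item When $s\leq r+2$ it is automatic, since $H^1(\mathrm{End}_0E^\rho_{r,s}(1))=0$.
\item When $s=r+k$ with $k\geq3$, it uses the exact sequences $0\to\mathcal{O}(-1)\xrightarrow{\Phi}\mathrm{End}_0E\to Q\to0$ and $0\to Q\to\mathrm{End}_0E(1)\xrightarrow{\mathrm{tr}(\cdot\,\Phi)}\mathcal{O}(2)\to0$. These are valid because $\Phi$ vanishes nowhere. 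Together with $H^2(\mathcal{O}(-1))=H^1(\mathcal{O}(2))=0$ they give the surjectivity.
\end{enumerate}

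Once this step is inserted and you restrict to such $\Phi$, the rest of your plan is sound and genuinely different from the paper. Your $\mathbb{H}^0=0$ argument is the paper's Lemma~\ref{Lem19}. Your identity $\dim\mathbb{H}^1=\chi(\mathrm{End}_0E(1))-\chi(\mathrm{End}_0E)=6$ replaces the paper's case-by-case computation of $h^0$, $h^1$ of $\mathrm{End}_0E^\rho_{r,s}(d)$ and of the image of $H^0(\mathrm{End}_0E)$. It does not need $E$ to be a Schwarzenberger bundle, and it makes the independence from $r,s$ transparent. Your spectral-correspondence count agrees with this, but it only sees the open locus of smooth spectral covers.
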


When $d>1$, we consider connected components of $\mathcal{M}_{c_1,c_2,d}^{\mathrm{VW}}$ containing pairs $(E,\Phi)$ where $E$ is a Schwarzenberger bundle of type $\ell=1$. We prove the following:

\begin{thm}\label{Thm1}
     Let $d>1$ and $r,s\in\mathbb{Z}$ with $s\geq r$. Let $c_1=r+s-1$ and $c_2=\frac{1}{2}(r(r-1)+s(s-1))$, and let $\mathcal{C}$ be a connected component of $\mathcal{M}_{c_1,c_2,d}^{\mathrm{VW}}$ containing stable pairs whose underlying bundle is a Schwarzenberger bundle of type $\ell=1$. The complex dimension of $\mathcal{C}$ is then $\frac{3}{2}d(d+3)$.

\end{thm}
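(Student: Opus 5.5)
We compute the dimension of $\mathcal{C}$ through deformation theory of pairs, evaluated at a well-chosen point of the component. Deformations of a pair $(E,\Phi)$ with $\Phi\in H^0(\mathrm{End}_0E\otimes\mathcal{O}(d))$ are governed by the hypercohomology of the two-term complex
\[
C^\bullet:\ \mathrm{End}_0E \xrightarrow{\ [\,\cdot\,,\Phi]\ } \mathrm{End}_0E\otimes\mathcal{O}(d).
\]
Here $\mathbb{H}^1(C^\bullet)$ is the Zariski tangent space and $\mathbb{H}^2(C^\bullet)$ contains the obstructions. Stability gives $\mathbb{H}^0(C^\bullet)=0$, since the only infinitesimal automorphisms are scalars, which are removed by the trace-free condition.

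The plan is to choose a sufficiently generic point of $\mathcal{C}$, show that $\mathbb{H}^2(C^\bullet)=0$ there, and read off $\dim\mathbb{H}^1=-\chi(C^\bullet)$. The resulting number is then the dimension of $\mathcal{C}$, because the moduli space is smooth at that point and the Euler characteristic is constant on the component.

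\textbf{Step 1: the Euler characteristic.} By Riemann--Roch on $\mathbb{P}^2$,
\[
\chi(C^\bullet)=\chi(\mathrm{End}_0E)-\chi(\mathrm{End}_0E(d)),
\]
and both terms depend only on $d$ and the discriminant $\Delta=4c_2-c_1^2$. With $c_1=r+s-1$ and $c_2=\tfrac12(r(r-1)+s(s-1))$ one gets $\Delta=(s-r)^2-1$. Put $n=s-r$. Expanding $\chi(\mathrm{End}_0E\otimes\mathcal{O}(k))$, the $n$-dependent terms are of the form $-\Delta$ times a constant and cancel in the difference. What remains is
\[
\chi(C^\bullet)=-\Bigl(3\binom{d+2}{2}-3\Bigr)=-\tfrac32 d(d+3).
\]
This matches the claimed answer. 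It is also consistent with the split and type-one families: $\Delta=n^2-1$ is exactly $\Delta$ for $\mathcal{O}(r-1)\oplus\mathcal{O}(s)$, so one can sanity-check the count against the split computation done earlier in the paper.

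\textbf{Step 2: vanishing of $\mathbb{H}^2$.} By Serre duality, since $\mathrm{End}_0E$ is self-dual and $K=\mathcal{O}(-3)$, the group $\mathbb{H}^2(C^\bullet)$ is dual to $\mathbb{H}^0$ of the dual complex
\[
\mathrm{End}_0E(-d-3)\xrightarrow{\ [\,\cdot\,,\Phi]\ }\mathrm{End}_0E(-3).
\]
Its $\mathbb{H}^0$ consists of sections $\psi\in H^0(\mathrm{End}_0E(-3))$ with $[\psi,\Phi]=0$, together with contributions from $H^1(\mathrm{End}_0E(-d-3))$ mapping to zero.

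I would show this vanishes in two stages. First, prove $H^0(\mathrm{End}_0E(-3))=0$. Second, use the hypercohomology spectral sequence to control the map
\[
H^1(\mathrm{End}_0E(-d-3))\to H^1(\mathrm{End}_0E(-3)).
\]
For a stable pair, a nonzero $\psi$ commuting with $\Phi$ would have to be a polynomial in $\Phi$. Because $\Phi$ is generic and $d>0$, the degrees force $\psi=0$; this is the same type of argument used to show $\Phi=0$ for $L=K$.

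\textbf{Step 3: choice of the point and the main obstacle.} I expect this step to be the hardest. The bundles here can be very unstable, for instance $\mathcal{O}(r-1)\oplus\mathcal{O}(s)$ with large $n$, so $H^0(\mathrm{End}_0E(-3))$ need not vanish: it contains $\mathrm{Hom}(\mathcal{O}(r-1),\mathcal{O}(s-3))$.

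I would therefore work directly with a type-one Schwarzenberger bundle $E=f_*M$, where $f\colon\mathbb{P}^1\times\mathbb{P}^1\to\mathbb{P}^2$ is the double cover. In that setting a Higgs field corresponds to spectral data. The plan is to use the spectral correspondence: with $\Phi$ generic, the commutant of $\Phi$ in $\mathrm{End}_0E$ is a line bundle $N$ coming from the spectral cover. Then $\mathbb{H}^2$ reduces to $H^0$ and $H^1$ of line bundles $N(-3)$ and $N(-d-3)$ on the spectral surface. Their vanishing should follow from Kodaira vanishing or explicit Künneth computations on $\mathbb{P}^1\times\mathbb{P}^1$, using the relation between $\mathcal{O}(d)$, the branch conic and the line bundle $M$.

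If this local obstruction computation fails, the fallback is to parametrize $\mathcal{C}$ explicitly. One would count the Higgs fields modulo $\mathrm{Aut}(E)$ on the Schwarzenberger bundle (or on the split bundle), add the deformations of $E$ within the component, and check that the total equals $-\chi(C^\bullet)=\tfrac32d(d+3)$. Since $\dim\mathbb{H}^1\ge-\chi(C^\bullet)$ always holds, equality at one point yields smoothness there and hence the dimension of $\mathcal{C}$.
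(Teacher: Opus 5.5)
There is a genuine gap. The whole argument rests on $\mathbb{H}^2(C^\bullet)=0$ at some point of $\mathcal{C}$, and Steps 2--3 do not establish it. Step 1 is correct; it is the same Riemann--Roch computation the paper uses for $h^1$. Since $\mathbb{H}^0=\mathbb{H}^3=0$ for stable pairs, you get $\dim\mathbb{H}^1=\tfrac{3}{2}d(d+3)+\dim\mathbb{H}^2$, but the obstruction is never controlled.

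Your Serre-dual bookkeeping is misplaced. Sections of $\mathrm{End}_0E(-d-3)$ commuting with $\Phi$ describe the dual of $\mathbb{H}^3$, which is the term you silently dropped from $\chi$. The $H^0$-piece of the dual of $\mathbb{H}^2$ is a cokernel, and it vanishes simply because $H^0(\mathrm{End}_0E^\rho_{r,s}(-3))=0$. What must actually be proved is that $[\,\cdot\,,\Phi]\colon H^1(\mathrm{End}_0E)\to H^1(\mathrm{End}_0E(d))$ is surjective.

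Your spectral plan does not give this surjectivity. The centralizer of a regular $\Phi$ in $\mathrm{End}_0E$ is $\mathcal{O}(-d)\cdot\Phi$ on $\mathbb{P}^2$, not a line bundle on a spectral surface. The spectral surface of an $\mathcal{O}(d)$-twisted field is branched along a curve of degree $2d$, not $\mathbb{P}^1\times\mathbb{P}^1$. Moreover, if $s-r\geq d+2$ then $h^0(\mathrm{End}_0E^\rho_{r,s}(d))=\tfrac{1}{2}d(d+1)=h^0(\mathcal{O}(d-1))$. So every Higgs field on $E^\rho_{r,s}$ is $h\Phi_0$, with $\Phi_0$ the type-one field and $h\in H^0(\mathcal{O}(d-1))$, and its spectral cover is branched along $h^2\rho$, hence singular. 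Your fallback count is circular: it equals $-\chi$ exactly when $\mathbb{H}^2=0$.

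The paper computes the cohomology of $\mathrm{End}_0E^\rho_{r,s}(d)$ from the sequence $0\to\mathcal{O}(d,d)\to f^*(E^\rho_{r,s})^*(r+d,s+d)\to\mathcal{O}(1-s+r+d,1-r+s+d)\to0$ on $\mathbb{P}^1\times\mathbb{P}^1$, together with Riemann--Roch. It then uses $H^1(\mathrm{End}_0E^\rho_{r,s}(d))=0$ for $d>1$ to kill $\mathbb{H}^2$ at every Schwarzenberger point and reads off $\dim\mathbb{H}^1$ directly.

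Be aware, however, that the same sequence and K\"unneth give $h^1(\mathrm{End}_0E^\rho_{r,r+k}(d))=h^1(\mathcal{O}(1-k+d,1+k+d))=(k-d-2)(k+d+2)$ for $d\leq k-3$. So that vanishing holds only for $d\geq s-r-2$, and in the remaining range a generic-point argument of your type is genuinely needed. One that works is as follows.
\begin{itemize}
\item Let $C_h=\{h=0\}$ and factor $[\,\cdot\,,h\Phi_0]$ as $H^1(\mathrm{End}_0E)\to H^1(\mathrm{End}_0E(1))\xrightarrow{\cdot h}H^1(\mathrm{End}_0E(d))$.
\item The first map is onto by the paper's $d=1$ surjectivity argument.
\item Since $H^2(\mathrm{End}_0E(1))=0$, the cokernel of the second map is $H^1(C_h,\mathrm{End}_0E(d)|_{C_h})$, which is Serre dual to $H^0(C_h,\mathrm{End}_0E(-4)|_{C_h})$.
\item For generic $h$ this group vanishes. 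It injects into $H^0$ of the restriction of $\mathcal{O}(r,s)\otimes f^*(E^\rho_{r,s})^*(-4)$ to the smooth irreducible curve $f^{-1}(C_h)$ of bidegree $(d-1,d-1)$. That restriction is an extension of line bundles of degrees $-6(d-1)$ and $-8(d-1)$, so it has no sections.
\end{itemize}
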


The moduli space of Higgs bundles over Riemann surfaces admits a natural algebraic action of $\mathbb{C}^*$, induced by scaling the Higgs field. This action plays a central role in the computation of various enumerative invariants via localization techniques. An analogous $\mathbb{C}^*$-action is present on the moduli space of wild Vafa-Witten pairs and can likewise be exploited to compute invariants. Among these are the \textit{Tanaka-Thomas invariants}, introduced by Tanaka and Thomas in \cite{MR4158461}, defined as integrals of the virtual Euler class of the virtual normal bundle of the compact $\mathbb{C}^*$-fixed locus of the moduli space of stable wild Vafa-Witten bundles (see \cite{MR1305096,MR4158461,MR4194304,Chen}, for a detailed discussion). We end the paper by describing the $\C^*$-fixed locus of the moduli space of stable wild Vafa-Witten bundles on $\mathbb{P}^2$, following the analysis of Tanaka and Thomas; the computation of the Tanaka-Thomas invariants will appear in a future paper.

The paper is organized as follows. Section \ref{Pre} contains the necessary background for this paper. Section \ref{Sec3} is devoted to the study of stable wild Vafa-Witten bundles when the underlying bundle is split or a Schwarzenberger bundle for $\ell=1,2$. Section \ref{Sec4} contains the deformation theory of stable wild Vafa-Witten bundles and a proof of Theorems \ref{Thm2} and \ref{Thm1}. Lastly, Section \ref{Sec5} contains a description of the $\mathbb{C}^*$-fixed locus of stable wild Vafa-Witten bundles on $\mathbb{P}^2$.    

\section{Preliminaries}\label{Pre}

In this section, a brief overview of the basic objects and concepts needed in this paper are presented.

\subsection*{Definitions.}

We begin by introducing some definitions.
\begin{defn}
    Let $X$ be a complex manifold with canonical bundle $K_X$. A \textit{Vafa-Witten bundle} is a pair $(E,\Phi)$ where $E\to X$ is a holomorphic vector bundle and $\Phi:E\to E\otimes K_X$ is a holomorphic bundle map called a \textit{Higgs field}. Equivalently, $\Phi\in H^0(X,\mathrm{End}E\otimes K_X)$.
\end{defn}

These pairs were first considered by Vafa and Witten \cite{MR1305096} for $X$ a K\"ahler complex surface. In this paper, we consider the more general case where the canonical bundle $K_X$ is replaced by any holomorphic line bundle $L\to X$.

\begin{defn}
    Let $X$ be a complex manifold and $L\to X$ be a fixed holomorphic line bundle. A \textit{wild Vafa-Witten bundle} is a pair $(E,\Phi)$ where $E\to X$ is a holomorphic vector bundle and $\Phi:E\to E\otimes L$ is a holomorphic bundle map called a \textit{Higgs field}. Equivalently, $\Phi\in H^0(X,\mathrm{End}E\otimes L)$.
\end{defn}

This is a higher dimensional analogue of the generalized Higgs bundles considered by Bottacin \cite{MR1334607} and Markman \cite{MR1300764} on Riemann surfaces. Physicists are particularly interested in the case $L=K_X(D)$ for some effective divisor $D$ on $X$ (see for example \cite{MR3731155}). 

\begin{exmp}
    On $X=\mathbb{P}^2$, holomorphic line bundles are of the form $\mathcal{O}(d)$ for some $d\in\mathbb{Z}$. Thus, a wild Vafa-Witten pair $(E,\Phi)$ is such that $\Phi \in H^0(\mathbb{P}^2,\mathrm{End}E \otimes \mathcal{O}(d))$ for some $d\in\mathbb{Z}$. 
\end{exmp}

We are interested in moduli spaces of \textit{stable} wild Vafa-Witten bundles. In order to define stability, we need the notion of degree.

\begin{defn} 
Let $X$ be a complex projective manifold of dimension $n$ and $E\to X$ be a rank $r$ holomorphic vector bundle and choose any ample line bundle $H$ on $X$. We define the \textit{degree} of $E$ to be 

$$\deg_H(E):=c_1(E)\cdot H^{n-1}.$$ 
\end{defn}

\begin{exmp}
    On $X=\mathbb{P}^2$, $n=2$ and so $\deg_H(E)=c_1(E)\cdot H$. We choose the ample line bundle $H=\mathcal{O}(1)$ for the definition of the degree throughout the paper. For $E\to \mathbb{P}^2$ a rank $r$ holomorphic vector bundle, $c_1(E)=c_1(\mathcal{O}(m))$ for some $m\in\mathbb{Z}$. Therefore, $$\deg_H(E)=c_1(\mathcal{O}(m))\cdot c_1(\mathcal{O}(1))=m.$$
\end{exmp}

\begin{defn}
Let $X$ be a complex projective manifold with fixed ample line bundle $H$. Let $(E,\Phi)$ be a wild Vafa-Witten bundle over $X$ with $\Phi\in H^0(X,\mathrm{End}E\otimes L)$ for some line bundle $L$ on $X$. We say that the pair $(E,\Phi)$ is \textit{stable} (with respect to $H$) if for any non-zero proper subsheaf $F\subset E$ satisfying $\Phi(F)\subseteq F\otimes L$, we have that 

\begin{align}\label{eqn2.1}
    \frac{\mathrm{deg}_H(F)}{\mathrm{rank}(F)}<\frac{\mathrm{deg}_H(E)}{\mathrm{rank}(E)}.
\end{align}    

We call the pair \textit{semi-stable} if we allow for equality in the above inequality. 
\end{defn}

\begin{rem}
    Since we are only dealing with rank-two bundles $E$ over $\mathbb{P}^2$, it suffices to only check the inequality \eqref{eqn2.1} for sub-line bundles of $E$. See \cite[p.87]{MR1600388}.
\end{rem}

\begin{exmp}
    On $X=\mathbb{P}^2$, with the ample line bundle $H=\mathcal{O}(1)$, let $(E,\Phi)$ be a rank-two wild Vafa-Witten bundle over $\mathbb{P}^2$ with $\Phi\in H^0(\mathbb{P}^2,\mathrm{End}E\otimes \mathcal{O}(d))$, $d\geq0$. Then, the pair $(E,\Phi)$ is stable if for every sub-line bundle $L\subset E$ satisfying $\Phi(L)\subseteq L\otimes \mathcal{O}(d)$, we have $c_1(L)<\frac{1}{2}c_1(E)$.
\end{exmp}

\begin{rem}\label{rem4}
    It is worth noting that from the definition of stability of a vector bundle that a vector bundle $E$ is stable if and only if there exists a line bundle $L$ such that $E\otimes L$ is stable if and only if for all line bundles $L$, $E\otimes L$ is stable.
\end{rem}

\begin{rem}
    In this paper, we use the terminology stable pairs interchangeably with stable wild Vafa-Witten bundles.
\end{rem}

When classifying stable wild Vafa-Witten bundles, one does so up to isomorphism.

\begin{defn}
    Let $X$ be a complex projective manifold with fixed ample line bundle $H$. Given two stable wild Vafa-Witten bundles $(E_1,\Phi_1)$ and $(E_2,\Phi_2)$, we say that $(E_1,\Phi_1)$ is isomorphic to $(E_2,\Phi_2)$ if there exists an isomorphism of the underlying holomorphic bundles $f:E_1\to E_2$ such that $\Phi_1=f^{-1}\circ\Phi_2\circ f$.
\end{defn}

\begin{rem} 
In the above, when $E_1=E_2$, pairs $(E,\Phi_1)$ and $(E,\Phi_2)$ are isomorphic if there exists an automorphism $g:E\to E$ such that $\Phi_1=g^{-1}\circ\Phi\circ g$. In this case, pairs are isomorphic when the Higgs fields are equivalent up to conjugation by automorphisms of the bundle.
\end{rem}

\subsection*{The spectral correspondence.}
There is a correspondence relating stable Higgs bundles over Riemann surfaces to spectral covers called the \textit{spectral correspondence} that was first developed by Hitchin \cite{HitchinStableBundles}. This also holds for stable pairs $(E,\Phi)$ on complex projective surfaces $X$ with $\Phi\in H^0(X,\mathrm{End}E\otimes L)$ for $L$ a fixed line bundle on $X$. Let us briefly describe this correspondence.

Suppose $X$ is a complex surface with $L$ a fixed line bundle on $X$. Let $E\to X$ be a rank $r$ holomorphic vector bundle and $\Phi\in H^0(X,\mathrm{End}E\otimes L)$. Consider the pullback bundle $\pi^* L\to\mathrm{Tot}(L)$ over the total space of $L$. The characteristic polynomial of $\Phi$ over $\mathrm{Tot}(L)$ is 

\begin{align}\label{chareqn}
\mathrm{char}_{\eta}(\Phi)(p,\uplambda):=\eta^r(p,\uplambda)+(\eta^{r-1}\otimes\pi^*\phi)(p,\uplambda)+(\eta^{r-2}\otimes\pi^*\phi^2)(p,\uplambda)+\cdots+\pi^*\phi^r(p,\uplambda),
\end{align}

where $\eta:\mathrm{Tot}(L)\to\pi^*L$ is the tautological section of $\pi^*L$, defined by 

$$\eta(p,\uplambda):=((p,\uplambda),\uplambda))\in \pi^*L,$$ 

and 

$$\phi^i:=(-1)^i \mathrm{Tr} (\land^i\Phi)\in H^0(X,L^{\otimes i}).$$ If $\Phi$ has distinct eigenvalues, then the zero locus of \eqref{chareqn} defines a non-singular complex surface we denote $Y$. The map $\pi$ restricted to $Y$ gives an $r$:1 cover $Y\to X$ with branching locus given by the zeroes of \eqref{chareqn}. Given a line bundle $M$ on $Y$, the push-forward $\pi_*M$ is a rank $r$ holomorphic vector bundle on $X$. For open sets $U\subseteq X$, there is the mapping $M|_{\pi^{-1}(U)}\to M\otimes \pi^*L|_{\pi^{-1}(U)}$ defined by $s\mapsto s\otimes\eta$. The push-forward of this map gives a mapping

$$\pi_*M|_U\to\pi_*M\otimes L|_U,$$ where $\pi_*M\otimes L=\pi_*(M\otimes \pi^*L)$ using the projection formula. This map is a Higgs field $\Phi$ for the bundle $\pi_*M$ and the pair $(\Phi,\pi_*M)$ is stable. This implies that if one wants to study rank-two stable wild Vafa-Witten pairs $(E,\Phi)$ on $\mathbb{P}^2$, one needs to study double covers of $\mathbb{P}^2$. For a more in depth presentation on the spectral correspondence, see \cite{HitchinStableBundles,MR1334607,MR1300764,MR3884746,MR3837869,banerjee2025generalizedspectralcorrespondence} and the references therein.

\subsection*{Double covers of $\mathbb{P}^2$.}
The non-singular surfaces $Y$ appearing as double covers $Y\to \mathbb{P}^2$ in the spectral correspondence were studied by Schwarzenberger \cite{MR0137712} to construct rank-two holomorphic vector bundles over $\mathbb{P}^2$. His result is:

\begin{thm}[Schwarzenberger]
    Let $E\to\mathbb{P}^2$ be a rank-two holomorphic vector bundle. Then there exists a non-singular surface $Y$, a line bundle $L\to Y$, and a double cover $f:Y\to \mathbb{P}^2$ such that $E=f_*L$.
\end{thm}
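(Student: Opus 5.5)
The plan is to realize $E$ as the push-forward of a line bundle on a double cover. Since $c_1(E)$ can be changed by twisting, and twisting by $\mathcal{O}(k)$ changes $c_1(E)$ by $2k$, I first reduce to two cases. After a twist, either $c_1(E)=0$ or $c_1(E)=-1$. Because $f_*(L\otimes f^*\mathcal{O}(k))=f_*L\otimes\mathcal{O}(k)$ by the projection formula, it suffices to prove the theorem for these normalized bundles.

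\textbf{Choosing the cover.} Take a section $s\in H^0(\mathbb{P}^2,\mathrm{End}\,E\otimes\mathcal{O}(\ell))$ for $\ell\gg0$. Subtracting half the trace, I may assume $s$ is trace-free. By Serre vanishing and global generation of $\mathrm{End}_0E(\ell)$, a general such $s$ has the following property: its determinant $\det s\in H^0(\mathcal{O}(2\ell))$ vanishes on a smooth curve $B$ of degree $2\ell$. Moreover, $s$ is nowhere zero on $\mathbb{P}^2$; this uses that $\mathrm{rank}\,\mathrm{End}_0E=3>2$, so a general section of a globally generated rank-3 bundle has no zeros on a surface.

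Let $Y\subset\mathrm{Tot}(\mathcal{O}(\ell))$ be the zero locus of $\eta^2+\det s$, as in \eqref{chareqn}. Then $Y$ is a double cover branched along the smooth curve $B$, hence a non-singular surface.

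\textbf{Constructing the line bundle.} On $Y$, consider the map
\[
\pi^*s-\eta\cdot\mathrm{Id}:\ \pi^*E\to\pi^*E\otimes\pi^*\mathcal{O}(\ell).
\]
At each point of $Y$ it has rank exactly one. Indeed, its determinant $\eta^2+\det s$ vanishes on $Y$, so the rank is at most one. It cannot be zero: that would force $s=\eta\,\mathrm{Id}$ at that point, hence $s=0$ because $s$ is trace-free, and we chose $s$ nowhere zero. Therefore its kernel $L$ is a line subbundle of $\pi^*E$, namely the eigenline bundle.

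The inclusion $L\hookrightarrow\pi^*E$ corresponds by adjunction to a map $\pi_*L\to E\otimes\pi_*\mathcal{O}_Y$. Composing with the trace $\pi_*\mathcal{O}_Y\to\mathcal{O}$ gives a morphism $\pi_*L\to E$ of rank-two bundles.

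\textbf{Main obstacle: showing $\pi_*L\to E$ is an isomorphism.} This is the step I expect to be hardest. Away from $B$, the two preimages of a point carry the two distinct eigenlines of $s$. These span $E$, so the map is an isomorphism there.

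Along $B$ the eigenlines coincide and the map degenerates, so the direct argument fails. Here I would compare determinants: an isomorphism off $B$ between rank-two bundles is an isomorphism everywhere exactly when $c_1$ agrees. A local computation at a branch point, with $s\sim\begin{pmatrix}0&1\\ t&0\end{pmatrix}$ and $\eta^2=t$, shows that the kernel is spanned by $(1,\eta)$. It then follows that $\pi_*L\cong E$ locally, with the determinant matching.

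If this local route fails, I would instead twist $L$ by a suitable multiple of the ramification divisor $R$, using $\pi_*\mathcal{O}(R)$-type corrections and $c_1(\pi_*L)=\pi_*c_1(L)-\ell H$, so that the Chern classes match. This would give $E=\pi_*L'$ for an appropriately adjusted line bundle $L'$ on $Y$.
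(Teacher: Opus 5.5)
The paper does not prove this theorem; it quotes it from Schwarzenberger \cite{MR0137712}, so your argument has to stand on its own. Your set-up is sound and is the natural modern route: it is the spectral correspondence recalled in the preliminaries, run backwards. For $\ell\gg0$, a general trace-free $s\in H^0(\mathrm{End}_0E(\ell))$ is nowhere zero, $\pi^*s-\eta$ has rank exactly one on $Y$, and its kernel $L$ is a line subbundle of $\pi^*E$.

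The smoothness of $B=\{\det s=0\}$ needs a real argument rather than an appeal to Serre vanishing. The form $\det$ is a nondegenerate quadratic form on the rank-three bundle $\mathrm{End}_0E$, so the nilpotent cone minus the zero section is smooth. Global generation makes evaluation $\mathbb{P}^2\times H^0\to\mathrm{Tot}(\mathrm{End}_0E(\ell))$ a submersion. Generic smoothness then shows that a general $s$, which misses the zero section, meets the cone transversally.

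\textbf{The gap.} The last step is not merely hard but false: $\pi_*L\not\cong E$. Your own local model shows this. With $s=\begin{pmatrix}0&1\\ t&0\end{pmatrix}$ and $\eta^2=t$, $L$ is generated by $v=e_1\otimes1+e_2\otimes\eta$, so $\pi_*L$ has local frame $v$ and $\eta v=e_1\otimes\eta+e_2\otimes t$. Since $\mathrm{tr}(1)=2$ and $\mathrm{tr}(\eta)=0$, your map sends $v\mapsto 2e_1$ and $\eta v\mapsto 2t\,e_2$. Its determinant $4t$ vanishes along $B$, so $E/\pi_*L$ is a line bundle on $B$. By your own criterion, $c_1(E)-c_1(\pi_*L)=[B]=2\ell H\neq0$, so no map $\pi_*L\to E$ whatsoever is an isomorphism. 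For example, $E=\mathcal{O}^2$ with $\ell=1$ gives $L\cong\mathcal{O}(0,-1)$ on $\mathbb{P}^1\times\mathbb{P}^1$ and $\pi_*L\cong\mathcal{O}(-1)^{2}$.

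Your fallback points the right way, but it only matches Chern classes, and that does not produce an isomorphism. What is missing is the correct twist together with an actual map.
\begin{itemize}
\item Take $L'=L\otimes\omega_{Y/\mathbb{P}^2}$, where $\omega_{Y/\mathbb{P}^2}\cong\mathcal{O}_Y(R)\cong\pi^*\mathcal{O}(\ell)$.
\item Apply relative duality, $\mathrm{Hom}(\pi_*L',E)\cong\mathrm{Hom}(L',\pi^*E\otimes\omega_{Y/\mathbb{P}^2})$, to the twisted inclusion.
\item Locally, $\omega_{Y/\mathbb{P}^2}$ is generated by the functional $\varphi$ with $\varphi(1)=0$ and $\varphi(\eta)=1$. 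Note that $\mathrm{tr}=2\eta\varphi$, which is exactly why the trace map drops rank along $R$.
\item The induced map sends $v\varphi\mapsto e_2$ and $\eta v\varphi\mapsto e_1$, so it is an isomorphism. Hence $E\cong\pi_*L'=\pi_*L\otimes\mathcal{O}(\ell)$.
\end{itemize}
Equivalently, follow Beauville--Narasimhan--Ramanan. The cokernel $M$ of $\pi^*s-\eta$ on $\mathrm{Tot}(\mathcal{O}(\ell))$ is a line bundle on $Y$, isomorphic to $L\otimes\pi^*\mathcal{O}(\ell)$, and the natural map $E\to\pi_*M$ is an isomorphism.
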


Let $f:Y^\ell\to\mathbb{P}^2$ be a double cover of the projective plane branched along a smooth curve of positive degree $2\ell$. The geometry of $Y^\ell$ depends on the value of $\ell$. When $\ell=1$, one has $Y^1\cong \mathbb{P}^1\times\mathbb{P}^1$; for $\ell=2$, $Y^2$ is a del Pezzo surface of degree two, equivalently the blow-up of $\mathbb{P}^2$ at seven distinct points in general position; for $\ell=3$, $Y^3$ is a K3 surface; and for $\ell\geq4$, the surfaces $Y^\ell$ are of general type \cite{MR818312}. For any line bundle $L\to Y^\ell$, the push-forward $f_* L$ defines a rank-two vector bundle on $\mathbb{P}^2$. In what follows, we describe the rank-two vector bundles arising from $Y^1$ and $Y^2$. The case $\ell=3$ is only partially understood, and, to the best of the author's knowledge, no results are available for $\ell\geq4$.

Let $\ell=1$. In this case, the double cover $f:\mathbb{P}^1\times\mathbb{P}^1 \to \mathbb{P}^2$ is branched along a conic, i.e., a curve of degree two. The vector bundles of rank-two on $\mathbb{P}^2$ obtained as push-forwards of line bundles on $\mathbb{P}^1\times\mathbb{P}^1$ under $f$ admit an explicit description. Let $\pi_i:\mathbb{P}^1\times\mathbb{P}^1\to\mathbb{P}^1$ be the projection map onto the $i$-th factor for $i=1,2$. Every line bundle on $\mathbb{P}^1\times\mathbb{P}^1$ is of the form 

$$\mathcal{O}(r,s):=\pi_1^*\mathcal{O}(r)\otimes \pi_2^*\mathcal{O}(s),$$

for $r,s\in\mathbb{Z}$. For each such line bundle, we denote the associated rank-two bundle on $\mathbb{P}^2$ by 

$$E_{r,s}:=f_*\mathcal{O}(r,s).$$ 

Given any $\rho\in H^0(\mathbb{P}^2,\mathcal{O}(2))$, its zero locus determines a conic, and hence defines a double cover $f_\rho:\mathbb{P}^1\times\mathbb{P}^1\to\mathbb{P}^2$ branched along this conic. To record the choice of branching curve, we write $E_{r,s}^\rho:=f_{\rho,*}\mathcal{O}(r,s)$ for the resulting bundle. The bundles $E_{r,s}^\rho$ are known as \textit{Schwarzenberger bundles} of type $\ell=1$, originally introduced by Schwarzenberger in \cite{MR0137712}. Their basic properties are discussed in \cite{MR0137712,MR3285779}.

\begin{rem}[\textbf{Some Properties of the Bundles $E_{r,s}^\rho$}]
The bundles $E_{r,s}^\rho$ have the following properties:

\begin{enumerate}[i)]
    \item $E_{r,s}^\rho\cong E_{s,r}^\rho$. Because of this, without loss of generality, we will always assume $s\geq r$ in this paper.

    \item $E_{r,s}^\rho\cong E_{r',s'}^\rho$ if and only if $(r,s)=(r',s')$ or $(r,s)=(s',r')$.
    
    \item $c_1(E_{r,s}^\rho)=(r+s-1)\cdot H$, $c_2(E_{r,s}^\rho)=\frac{1}{2}(r(r-1)+s(s-1))\cdot H^2$ where $H$ is the hyperplane section on $\mathbb{P}^2$ (or, $H=c_1(\mathcal{O}(1))$ for $\mathcal{O}(1)$ on $\mathbb{P}^2$).
    
    \item Given two conics $\rho,\rho'\in H^0(\mathbb{P}^2,\mathcal{O}(2))$ as branching curves for the map $\mathbb{P}^1\times\mathbb{P}^1\to\mathbb{P}^2$, then $E_{r,s}^{\rho'}\cong E_{r,s}^\rho$ for $s=r,r+1,r+2$ and $E_{r,s}^{\rho'}\ncong E_{r,s}^\rho$ for $s=r+k$ with $k>2$.
     
    \item $E_{r,r}^{\rho}\cong \mathcal{O}(r)\oplus \mathcal{O}(r-1)$, $E_{r,r+1}^\rho\cong\mathcal{O}(r)\oplus\mathcal{O}(r)$, $E_{r,r+2}^\rho\cong T{\mathbb{P}^2}\otimes\mathcal{O}(r-1)$, and these isomorphisms are independent of the choice of branching curve $\rho\in H^0(\mathbb{P}^2,\mathcal{O}(2))$.
\end{enumerate}
\end{rem}

Let $\ell=2$. The double cover $g:Y^2\to\mathbb{P}^2$ is now branched over a curve of degree four; we denote the double cover in this case by $g$ to avoid confusion with the map $f$ when $\ell=1$. Referring to \cite{MR0137712}, the smooth surface $Y^2$ is the blow-up of $\mathbb{P}^2$ along seven distinct points $p_1,p_2,\dots,p_7\in\mathbb{P}^2$ in general position (that is, no three points are collinear and no six lie on a conic). As such, we denote $Y^2$ by $\tilde{\mathbb{P}}^2_7$. As for Schwarzenberger bundles of type $\ell=1$, we construct rank-two bundles on $\mathbb{P}^2$ by taking push-forwards of line bundles on $\tilde{\mathbb{P}}^2_7$ to $\mathbb{P}^2$. Line bundles on $\tilde{\mathbb{P}}^2_7$ have the following description \cite{MR0137712}. Denote by $N_1,N_2,\cdots,N_7$ the exceptional divisors in $\tilde{\mathbb{P}}^2_7$ obtained by blowing up the points $p_1,p_2,\dots,p_7$, respectively. Let $H$ be the hyperplane section on $\mathbb{P}^2$ and set $M:=g^*H$. Any divisor on $\tilde{\mathbb{P}}^2_7$ has the form $pM+\sum_{i=1}^7 t_iN_i$ with $p,t_1,t_2,\cdots,t_7\in\Z$. We denote 

$$L^{p,t_i}:=\mathcal{O}\left(pM+\sum_{i=1}^7 t_iN_i\right)$$ 

for any $p,t_1,t_2,\ldots,t_7\in\mathbb{Z}$. Let $g_\rho:\tilde{\mathbb{P}}^2_7\to \mathbb{P}^2$ be the double cover determined by $\rho\in H^0(\mathbb{P}^2,\mathcal{O}(4))$, which is branched along the quartic $\rho=0$, and let 

$$E_{p,t_i}^\rho:=g_{\rho,*}L^{p,t_i}$$ 

be the corresponding rank-two bundle on $\mathbb{P}^2$. We call these bundles Schwarzenberger bundles of type $\ell=2$. Their basic properties are discussed in \cite{MR0137712}.

\begin{rem}[\textbf{Some Properties of the Bundles $E^\rho_{p,t_i}$}]\label{prop2}

    Suppose $p,t_1,t_2,\dots,t_7\in\mathbb{Z}$. Then the bundles $E_{p,t_i}^\rho$ have the following properties:

\begin{enumerate}[i)]

\item If $E_{p,t_i}^\rho=E_{q,s_i}^\rho$ then $p=3\sum_is_i+8q$ and $t_i=-s_i-3q-\sum_js_j$.

    \item $$c_1(E_{p,t_i}^\rho)=\left(\sum_{t=1}^7t_i+3p-2\right)H.$$ \\

    \item $$c_2(E_{p,t_i}^\rho)=\left(4p^2-3p+(3p-1)\sum_{j=1}^7t_j+\sum_{j=1}^7t_j^2+\sum_{i<j}t_it_j\right)H^2.$$ \\ 

    \item  For $p\geq-1$, $$h^0(\mathbb{P}^2,E_{p,t_i}^\rho)=\begin{cases} \frac{1}{2}(p+1)(p+2) & \text{if all $t_i\geq0$} \\ d(p,t_i) & \text{some $t_i<0$} \end{cases}.$$ \\ 

    \item For $p\geq-1$, $$h^1(\mathbb{P}^2,E_{p,t_i}^\rho)=\begin{cases} \frac{1}{2}\sum_{j=1}^7t_j(t_j-1) & \text{if all $t_i\geq0$} \\ d(p,t_i)-\frac{1}{2}(p+1)(p+2)+\frac{1}{2}
    \sum_{j=1}^7t_j(t_j-1)& \text{some $t_i<0$} \end{cases}$$ where, $$d(p,t_i)=\frac{1}{2}(p+1)(p+2)-\frac{1}{2}\sum_{j=1}^7t_j(t_j+1)$$ is the dimension of the vector space of curves of degree $p$ having multiplicity $-t_i$ at the points $p_i$ for which $t_i<0$. If $p<-1$ in properties iv) and v), then one simply uses Serre duality on $h^r(\mathbb{P}^2,E_{p,t_i}^\rho)$ for $r=1,2$. See \cite[Proposition 13]{MR0137712}.

    \end{enumerate}
\end{rem}

Note that certain rank-two holomorphic bundles over $\mathbb{P}^2$ are Schwarzenberger bundles of type both $\ell=1$ and $\ell=2$. For example, Schwarzenberger gives a classification of non-homogeneous rank-two bundles that are Schwarzenberger bundles of types both of $\ell=1$ and $\ell=2$. Recall that a rank-two vector bundle over $\mathbb{P}^2$ is said to be \textit{homogeneous} if it is of the form $\mathcal{O}(a)\oplus\mathcal{O}(b)$, $T{\mathbb{P}^2}\otimes\mathcal{O}(c)$, or $T^*{\mathbb{P}^2}\otimes\mathcal{O}(d)$ for some $a,b,c,d\in\mathbb{Z}$. We have:

\begin{prop}(\cite[Proposition 11]{MR0137712})\label{Prop9}
    Let $E$ be a non-homogeneous rank-two vector bundle on $\mathbb{P}^2$. Suppose that $E=E_{r,s}^\rho$ for some $r,s\in\mathbb{Z}$ and $\rho\in H^0(\mathbb{P}^2,\mathcal{O}(2))$, and that $E=E_{p,t_i}^{\rho'}$ for some $p,t_1,t_2,...,t_7\in\mathbb{Z}$ and $\rho'\in H^0(\mathbb{P}^2,\mathcal{O}(4))$. Then, $p\in\{-7,-6,...,-1\}$, $r=p$, $s=p+3$, and $\sum_{i=1}^7t_i=4-p$. In particular, $E$ is stable.
\end{prop}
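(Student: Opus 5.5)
The approach is to compare the Chern classes of the two descriptions of $E$ and to bring in $h^0$/$h^1$ data, starting from the $\ell=2$ identities in Remark \ref{prop2}.

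First, using Remark \ref{prop2} ii), iii), set $T=\sum_i t_i$ and $Q=\sum_i t_i^2+\sum_{i<j}t_it_j=\tfrac12\bigl(T^2+\sum_i t_i^2\bigr)$. Equating with the $\ell=1$ formulas gives
\begin{align*}
r+s-1&=T+3p-2,\\
\tfrac12\bigl(r(r-1)+s(s-1)\bigr)&=4p^2-3p+(3p-1)T+Q .
\end{align*}
Two relations are not enough to pin down $(r,s,p,t_i)$, so more information is needed. That information comes from cohomology. By Remark \ref{prop2} iv) and v), $h^0$ and $h^1$ of $E^{\rho'}_{p,t_i}$ and of its twists $E\otimes\mathcal{O}(k)$ are known. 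Twisting by $\mathcal{O}(k)$ corresponds to $p\mapsto p+2k$, since $g^*H=M$ and the projection formula applies. On the $\ell=1$ side, $E^\rho_{r,s}\otimes\mathcal{O}(k)=E^\rho_{r+k,s+k}$ and $h^i(E_{r,s})=h^i(\mathbb{P}^1\times\mathbb{P}^1,\mathcal{O}(r,s))$, which the K\"unneth formula gives. Equating $h^0$ and $h^1$ for all twists $k$ then gives many more equations.

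The key step I plan is to compare the smallest twist with a nonzero section. On the $\ell=1$ side, with $s\ge r$, the bundle $E_{r+k,s+k}$ has sections iff $r+k\ge0$, and $h^0(E_{0,s-r})=s-r+1$. On the $\ell=2$ side, Remark \ref{prop2} iv) says that sections appear once there are degree-$(p+2k)$ curves with the prescribed multiplicities. Matching the jump in $h^0$ should force $s-r$ to be small. Since $E$ is non-homogeneous, $s-r\ge3$ by Remark i)--v) for $\ell=1$. Combining these is the main obstacle: it requires a case analysis in which negative $t_i$ impose base conditions at the $p_i$. I would first try to show that all $t_i\ge -1$ and to bound $s-r\le3$ by comparing $h^1$. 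For $\ell=1$, $h^1(E_{0,s-r})=0$, while for $\ell=2$, $h^1=\tfrac12\sum t_j(t_j-1)$. This forces $t_j\in\{0,1\}$ for the nonnegative $t_j$, and the analogous formula with $d(p,t_i)$ handles negative $t_j$.

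With $s=r+3$, substituting into the two Chern equations should give $r=p$ and $T=4-p$, where the constraint on $Q$ uses $t_i\in\{0,1\}$ or similar. The range $p\in\{-7,\dots,-1\}$ then follows from the seven points and the $h^0$/$h^1$ constraints: $T=4-p$ must be realizable with the allowed $t_i$, and $p\ge-7$ comes from at most seven $t_i$ being positive. Finally, stability. Normalize by a twist so that $c_1=2p+2$ is even; then $E(-p-1)=E_{-1,2}$ has $c_1=0$. Stability is equivalent to $h^0(E_{-1,2})=h^0(\mathcal{O}(-1,2))=0$, and this holds by K\"unneth. I would use Remark \ref{rem4} to pass between twists.
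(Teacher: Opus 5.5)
The paper gives no proof of this statement (it only cites Schwarzenberger), so I am judging your sketch on its own. It has genuine gaps.

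\textbf{The twist rule and the $h^0/h^1$ comparison.} Your twist rule is wrong. The paper itself uses $g^*\mathcal{O}(1)=K_{\tilde{\mathbb{P}}^2_7}^{-1}=L^{3,-1}$ in the proof of Proposition \ref{6.2}, so $E^{\rho'}_{p,t_i}(k)=E^{\rho'}_{p+3k,\,t_i-k}$, which is consistent with Remark \ref{prop2} ii). Your rule $p\mapsto p+2k$ would move $c_1$ by $6k$ instead of $2k$; the phrase $M:=g^*H$ in the paper cannot be taken literally. Since the $t_i$ shift under twisting, comparing $h^1(E^\rho_{0,s-r})=h^1(E(-r))$ with $\tfrac12\sum t_j(t_j-1)$ at the untwisted $t_j$ is not legitimate.

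More seriously, the $h^0/h^1$ matching cannot bound $s-r$ at all. By K\"unneth, each $\mathcal{O}(a,b)$ on $\mathbb{P}^1\times\mathbb{P}^1$ has at most one nonzero cohomology group. Hence every twist of $E^\rho_{r,s}$ has natural cohomology, and all $h^i(E(k))$ are determined by $c_1,c_2$ through Riemann--Roch. Matching them therefore yields nothing about $(r,s)$ beyond your two Chern equations. To exclude $s-r\ge4$ this way you would have to show that no $g_*L$ with $c_1^2-4c_2=1-(s-r)^2$ has natural cohomology, and nothing in the sketch does this.

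What does work is the spectral correspondence. $E=g_*L$ carries a trace-free $\Phi\in H^0(\mathrm{End}_0E(2))$ with $\det\Phi=-\rho'$, a smooth quartic. Now take $s-r\ge4$ and run the proof of Proposition \ref{Prop11} with $d=2$; there $h^0(\mathcal{O}(3-s+r,3+s-r))=0$. So every element of $H^0(\mathrm{End}\,E^\rho_{r,s}(2))$ is $f_*$ of multiplication by some $\sigma\in H^0(\mathcal{O}(2,2))$. Its trace-free part has determinant $-\tfrac14(\sigma-\tau^*\sigma)^2$, where $\tau$ is the sheet interchange. Since $\sigma-\tau^*\sigma$ vanishes on the ramification curve, this descends to $c\,\rho h^2$ with $h\in H^0(\mathcal{O}(1))$, which is never smooth. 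Together with non-homogeneity, this gives $s=r+3$.

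\textbf{The normalization of $(p,t_i)$.} Even with $s=r+3$, your step \emph{the Chern equations give $r=p$ and $T=4-p$} fails, because $(p,t_i)$ is not an invariant of $E$. We have $g_*L\cong g_*\iota^*L$ for the covering involution $\iota$ (this is Remark \ref{prop2} i)), and the blow-down of $\tilde{\mathbb{P}}^2_7$ to $\mathbb{P}^2$ is not unique either. For example, Remark \ref{prop2} i) identifies $E^{\rho'}_{-1,(1,1,1,1,1,0,0)}$ with $E^{\rho'}_{7,(-3,-3,-3,-3,-3,-2,-2)}$. Both have $c_1=0$ and $c_2=2$, so both satisfy your equations with $(r,s)=(-1,2)$, but only the first has $r=p$.

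So no argument from invariants of $E$ can give $r=p$ or $p\in\{-7,\dots,-1\}$ for an arbitrary representative. The statement must be read as saying that a suitably normalized representative exists. Your proof would have to construct it, choosing between $L$ and $\iota^*L$ and among blow-down structures. That step is entirely missing, and \emph{$p\ge-7$ because at most seven $t_i$ are positive} is not an argument.

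There is a further warning sign. With $r=p$ and $T=4-p$, the $c_2$ equation becomes $\sum t_j^2=-p^2-8p-2$. Together with your intermediate claim that all $t_j\le1$, this forces $p=-1$, which suggests the $h^1$ comparison was made at the wrong parameters.

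Your stability argument is fine. It only needs $s-r\ge3$, which non-homogeneity already gives via the simplicity criterion stated after Proposition \ref{Prop11}.
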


\subsection*{$\mathbb{C}^*$-locus.}
There is a natural $\mathbb{C}^*$ group action on Higgs bundles over curves, used to study the topology of the moduli space of stable Higgs bundles (see for example \cite{MR887284,MR3884746}). This action also exists in the case of wild Vafa-Witten bundles over $\mathbb{P}^2$. It is defined as follows: given a stable wild Vafa-Witten bundle $(E,\Phi)$ and $c\in\mathbb{C}^*$, we set $c\cdot(E,\Phi)=(E,c\Phi)$. The $\mathbb{C}^*$-fixed points have the following structure:

\begin{thm}\label{Thm8}
    Let $d\geq0$. Given a $\mathbb{C}^*$-invariant stable wild Vafa-Witten bundle $(E,\Phi)$ with $\Phi\in H^0(\mathbb{P}^2,\mathrm{End}E\otimes\mathcal{O}(d))$, the Higgs field $\Phi$ is nilpotent, i.e., $\Phi^n=0$ for some $n \in \mathbb{N}$. Furthermore, the bundle $E$ decomposes as

    $$E=\bigoplus_{i,j}E_{i,j},$$ where $E_{i,j}$ are sub-bundles of $E$ and $\Phi:E_{i,j}\to E_{i-1,j+1}(d)$.
\end{thm}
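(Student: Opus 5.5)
The plan follows the standard Hitchin/Simpson argument, as adapted by Tanaka and Thomas. Suppose $(E,\Phi)$ is stable and fixed by $\mathbb{C}^*$. Then for every $c\in\mathbb{C}^*$ there is an isomorphism $(E,\Phi)\cong(E,c\Phi)$, that is, an automorphism $g_c$ of $E$ with $g_c^{-1}\Phi g_c=c\Phi$.

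\textbf{Nilpotency.} The coefficients of the characteristic polynomial, $\phi^i=(-1)^i\mathrm{Tr}(\wedge^i\Phi)\in H^0(\mathbb{P}^2,\mathcal{O}(id))$, are conjugation invariant. Hence $\phi^i(c\Phi)=c^i\phi^i(\Phi)=\phi^i(\Phi)$ for all $c$, which forces $\phi^i=0$ for every $i\geq1$. The characteristic polynomial is therefore $\eta^r$. By Cayley--Hamilton, applied fibrewise to $\Phi:E\to E(d)$ as a twisted endomorphism, we get $\Phi^r=0$ as a map $E\to E(rd)$.

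\textbf{Building a $\mathbb{C}^*$-action on $E$.} Stability of the pair makes it simple: any endomorphism of $E$ commuting with $\Phi$ is a scalar. The usual argument is to take an eigenvalue at a point and observe that the kernel is a $\Phi$-invariant subsheaf, which contradicts stability unless the endomorphism is scalar. Consequently $g_c$ is unique up to $\mathbb{C}^*$, and $c\mapsto[g_c]$ is a homomorphism $\mathbb{C}^*\to\mathrm{Aut}(E)/\mathbb{C}^*$. This is the main obstacle. We must lift it to an honest algebraic $\mathbb{C}^*$-action on $E$, possibly after passing to a finite cover $t\mapsto t^k$ of $\mathbb{C}^*$, and check that the lifted family is algebraic in $c$.

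For the lift I would follow Simpson and Tanaka--Thomas. The family $\{(E,c\Phi)\}$ over $\mathbb{C}^*$ has a universal pair on the moduli space, and the isomorphisms glue to a linearization. Concretely, $\mathrm{Aut}(E)$ is a linear algebraic group, so the preimage of the image of $\mathbb{C}^*$ is an extension of a one-dimensional torus by $\mathbb{C}^*$. Such an extension is commutative and splits after a finite cover. This yields a homomorphism $\lambda:\mathbb{C}^*\to\mathrm{Aut}(E)$ with $\lambda(t)^{-1}\Phi\lambda(t)=t^k\Phi$, and after reparametrizing we may take $k=1$.

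\textbf{Weight decomposition.} Since $\mathbb{C}^*$ acts trivially on $\mathbb{P}^2$, $E$ splits into weight sub-bundles, $E=\bigoplus_w E_w$. Each $E_w$ is a sub-bundle because it is the image of an idempotent projector built algebraically from $\lambda$. The equivariance relation $\lambda(t)\Phi=t^{-1}\Phi\lambda(t)$, up to sign conventions, shows that $\Phi$ shifts weight by one, $\Phi:E_w\to E_{w-1}(d)$.

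To match the double index $E_{i,j}$ in the statement, set $i$ equal to the weight and $j$ equal to minus the weight plus a constant. The components $E_{i,j}$ are then simply the $E_w$ relabelled so that $\Phi$ maps $E_{i,j}$ to $E_{i-1,j+1}(d)$; this is the Tanaka--Thomas bookkeeping, where $i+j$ is constant along each chain. Nilpotency also follows independently from this grading, since only finitely many weights occur. For rank two this reduces concretely to $E=L_1\oplus L_2$ with $\Phi$ strictly lower triangular, or to $\Phi=0$.
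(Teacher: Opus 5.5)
Your proposal is correct and follows essentially the same route as the paper. The paper gives no proof of its own and cites \cite[Section 7.3]{MR4158461} and \cite[Proposition 2.15]{Chen}; you reproduce that standard Hitchin--Simpson/Tanaka--Thomas argument (invariant coefficients $\phi^i$ vanish, simplicity lets the $\mathbb{C}^*$-action lift to $E$, weight decomposition with $\Phi$ lowering weight by one). One slip: you cannot literally ``reparametrize to $k=1$'' after a finite cover, but no cover is needed, because $G=\{g\in\mathrm{Aut}(E): g^{-1}\Phi g\in\mathbb{C}^*\Phi\}$ is Zariski closed (so algebraicity in $c$ is automatic) and the character $g\mapsto c$ has connected kernel $\mathbb{C}^*\cdot\mathrm{Id}$, so it splits (and a weight shift by $k$ could anyway be regrouped mod $k$).
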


For more details, see \cite[Section 7.3]{MR4158461} and \cite[Proposition 2.15]{Chen}. One can use virtual cycles in the sense of Behrend-Fantechi \cite{MR1437495} on the $\mathbb{C}^*$-fixed locus to compute the Tanaka-Thomas invariants of the moduli space \cite{MR4158461}.

\subsection*{Notation.}
We denote $E(d):=E\otimes \mathcal{O}(d)$ throughout for any holomorphic vector bundle $E$ on $\mathbb{P}^2$ and $d\in\mathbb{Z}$.

\section{Stable Wild Vafa-Witten Bundles}\label{Sec3}

In this section, we consider rank-two stable wild Vafa-Witten bundles $(E,\Phi)$ with $E$ a split bundle or a Schwarzenberger bundle of type $\ell=1$ or $\ell=2$, and $\Phi\in H^0(\mathbb{P}^2,\mathrm{End}E(d))$, $d\geq0$. When $E=\mathcal{O}(m_1)\oplus \mathcal{O}(m_2)$ with $m_1,m_2\in\mathbb{Z}$, by deformation theory, the Zariski tangent space to the moduli space $\mathcal{M}^{\mathrm{VW}}_{m_1+m_2,m_1m_2,d}$ at the point $(E,\Phi)$ corresponds to the space of trace-free Higgs fields $\Phi'\in H^0(\mathbb{P}^2,\mathrm{End}_0E(d))$ with $(E,\Phi')$ stable, up to conjugation by automorphisms of $E$ (see Section \ref{Sec4}); we compute the dimension of this space. When the underlying bundle is a Schwarzenberger bundle of type $\ell=1$, we compute the dimensions $h^i(\mathbb{P}^2,\mathrm{End}_0E(d))$, $i>0$, which appear in the deformation theory of wild Vafa-Witten pairs (see Section \ref{Sec4}). Finally, we determine which Schwarzenberger bundles of type $\ell=2$ are stable; for such bundles, the pair $(E,\Phi)$ is stable for any Higgs field $\Phi$.

\subsection{Split Case}\label{split}

We first consider pairs $(E,\Phi)$ with $E$ the sum of two line bundles and $\Phi\in H^0(\mathbb{P}^2,\mathrm{End}E(d))$, $d\geq0$. Let $E=\mathcal{O}(m_1)\oplus \mathcal{O}(m_2)$ with $m_1,m_2\in\mathbb{Z}$. In this case, the Higgs field may be express globally as 

$$\Phi=\begin{pmatrix} A & B \\ C & D \end{pmatrix}$$ with $A\in H^0(\mathbb{P}^2,\mathcal{O}(d))$, $B\in H^0(\mathbb{P}^2,\mathcal{O}(m_1-m_2+d))$, $C\in H^0(\mathbb{P}^2,\mathcal{O}(m_2-m_1+d))$, and $D\in H^0(\mathbb{P}^2,\mathcal{O}(d))$. As in \cite[Proposition 5.1]{MR3285779}, stability imposes conditions on $m_1$ and $m_2$:

\begin{prop}\label{2.1}
    Let $d\geq0$ and $E=\mathcal{O}(m_1)\oplus \mathcal{O}(m_2)$ with $m_1,m_2\in\mathbb{Z}$. Suppose that there exists a $\Phi\in H^0(\mathbb{P}^2,\mathrm{End}E(d))$ for which the pair $(E,\Phi)$ is stable. Then $|m_1-m_2|\leq d$. 
\end{prop}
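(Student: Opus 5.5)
We argue by contradiction. Suppose $|m_1-m_2|>d$; by symmetry of the roles of the two summands we may assume $m_1\geq m_2$, so that $m_1-m_2>d\geq 0$. We will show that the summand $\mathcal{O}(m_1)$ is a $\Phi$-invariant sub-line bundle that violates the stability inequality, whatever $\Phi$ is.

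Write $\Phi$ in block form as above:
$$\Phi=\begin{pmatrix} A & B \\ C & D \end{pmatrix},$$
with
$$C\in H^0(\mathbb{P}^2,\mathcal{O}(m_2-m_1+d)).$$
Since $m_2-m_1+d<0$ and $H^0(\mathbb{P}^2,\mathcal{O}(k))=0$ for $k<0$, we get $C=0$. Hence $\Phi$ is upper triangular with respect to the splitting $E=\mathcal{O}(m_1)\oplus\mathcal{O}(m_2)$.

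Consider the sub-line bundle $L=\mathcal{O}(m_1)\hookrightarrow E$, included as the first summand. Because $C=0$, the Higgs field sends a local section $(x,0)$ to $(Ax,0)$. Thus $\Phi(L)\subseteq L(d)$, and $L$ is $\Phi$-invariant.

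Compare slopes. We have $c_1(L)=m_1$ and $\tfrac12 c_1(E)=\tfrac12(m_1+m_2)$. Since $m_1>m_2$,
$$m_1>\tfrac12(m_1+m_2).$$
This contradicts the stability criterion for rank-two pairs on $\mathbb{P}^2$ stated in the example above: every $\Phi$-invariant sub-line bundle must satisfy $c_1(L)<\tfrac12 c_1(E)$. Therefore $|m_1-m_2|\leq d$.

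The only step requiring care is the vanishing of the off-diagonal entry $C$. It rests on the standard fact that $\mathcal{O}(k)$ has no nonzero sections on $\mathbb{P}^2$ when $k<0$, together with the strict inequality $m_1-m_2>d$, which makes the degree $m_2-m_1+d$ negative. The argument does not use that $\Phi$ is trace-free, so it holds for arbitrary $\Phi\in H^0(\mathbb{P}^2,\mathrm{End}E(d))$, as the statement requires.
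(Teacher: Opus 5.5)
Your proof is correct and follows the paper's argument: when $m_1-m_2>d$, the entry $C\in H^0(\mathbb{P}^2,\mathcal{O}(m_2-m_1+d))$ vanishes for degree reasons. Hence $\mathcal{O}(m_1)$ is a $\Phi$-invariant sub-line bundle with $m_1>\tfrac12(m_1+m_2)$, which contradicts stability.
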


\begin{proof}
Assume without loss of generality that $m_1\geq m_2$. If $m_1-m_2>d$, then $C=0$ and $\mathcal{O}(m_1)$ becomes $\Phi$-invariant and destabilizing, contradicting the stability of $(E,\Phi)$. 
\end{proof}

For fixed $E$, let us now compute the dimension of the space of Higgs fields $\Phi$ on $E$ with $(E,\Phi)$ stable, up to conjugation by automorphisms of $E$. Up to tensoring by a line bundle, we can assume that $E=\mathcal{O}\oplus\mathcal{O}(m)$ with $m \geq 0$ (see Remark \ref{rem4}).

\begin{prop}\label{Prop10}
    Let $d\geq 0$ and $0\leq m\leq d$. Let $E=\mathcal{O}\oplus\mathcal{O}(m)$. When $m=0$, the dimension of the space of Higgs fields $\Phi\in H^0(\mathbb{P}^2,\mathrm{End}E(d))$ with $(E,\Phi)$ stable, up to conjugation by automorphisms of $E$, is $2d^2+6d+1$. For $m\neq0$, the dimension is $2d^2+6d+\frac{1}{2}m(m-3)+2$.
\end{prop}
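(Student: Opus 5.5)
The plan is a parameter count: compute the dimension of the full space of Higgs fields $H^0(\mathbb{P}^2,\mathrm{End}E(d))$, then subtract the dimension of the group $\mathrm{Aut}(E)$ acting by conjugation, corrected by the dimension of the generic stabilizer. Two facts make this valid. First, stability of $(E,\Phi)$ is an open condition on $\Phi$, and I will check below that it is nonempty, so the stable Higgs fields form a dense open subset of the vector space. Second, a stable pair is simple, so its stabilizer in $\mathrm{Aut}(E)$ is exactly the scalars $\C^*$, which act trivially by conjugation. Hence the dimension of the quotient is
$$h^0(\mathrm{End}E(d))-(\dim\mathrm{Aut}(E)-1).$$
Throughout I use $h^0(\mathbb{P}^2,\mathcal{O}(k))=\frac{1}{2}(k+1)(k+2)$ for $k\geq 0$. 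This applies to every block below, since $0\le m\le d$ forces $d-m\geq 0$.

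\textbf{Case $m=0$.} Here $E=\mathcal{O}\oplus\mathcal{O}$, all four entries $A,B,C,D$ lie in $H^0(\mathcal{O}(d))$, and $\mathrm{Aut}(E)=GL_2(\C)$ has dimension $4$. The count gives
$$4\cdot\tfrac12(d+1)(d+2)-3=2d^2+6d+1.$$
Nonemptiness of the stable locus: the only candidate destabilizing subsheaves are $\Phi$-invariant copies of $\mathcal{O}\hookrightarrow E$. Take $B,C$ to be coprime sections and $A=D=0$ (for $d=0$, take instead a constant matrix with no eigenvector, which is impossible over $\C$). So for $d=0$ I expect to have to argue separately, or to note that the stable locus may be empty there, in which case the statement is vacuous. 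For $d>0$, with $B,C$ generic, no constant vector is an eigenvector of $\Phi$ at every point, so no copy of $\mathcal{O}$ is invariant.

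\textbf{Case $0<m\le d$.} Here $A,D\in H^0(\mathcal{O}(d))$, $B\in H^0(\mathcal{O}(d-m))$ and $C\in H^0(\mathcal{O}(d+m))$. The identity $(x-m)(y-m)+(x+m)(y+m)=2xy+2m^2$, applied with $x=d+1$, $y=d+2$, gives
$$h^0(\mathrm{End}E(d))=2d^2+6d+4+m^2.$$
The group $\mathrm{Aut}(E)$ consists of upper-triangular matrices with entries $\lambda,\mu\in\C^*$ on the diagonal and $h\in H^0(\mathcal{O}(m))$ off the diagonal. So
$$\dim\mathrm{Aut}(E)=2+\tfrac12(m+1)(m+2).$$
Subtracting $1+\frac12(m+1)(m+2)$ yields
$$2d^2+6d+2+\tfrac12 m(m-3),$$
as claimed. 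For nonemptiness, the only destabilizing candidate is the subsheaf $\mathcal{O}(m)$, together with its images under automorphisms. It is enough to choose $C\neq 0$, which is possible since $d+m\geq 0$, and to check that no sub-line bundle $\mathcal{O}(m)\hookrightarrow E$ is $\Phi$-invariant.

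The main obstacle is justifying that the naive subtraction computes the dimension. This requires two things. The first is that the $\mathrm{Aut}(E)/\C^*$-action on the stable locus has finite (indeed trivial) stabilizers; this follows from simplicity of stable pairs. The standard argument: an endomorphism commuting with $\Phi$ has an eigenvalue, and subtracting it gives a non-invertible endomorphism commuting with $\Phi$, whose kernel or image would be a $\Phi$-invariant destabilizing subsheaf. The second is that the stable locus is nonempty, which was checked case by case above. With these two facts, the orbits all have dimension $\dim\mathrm{Aut}(E)-1$, and the dimension of the quotient is the difference computed above.
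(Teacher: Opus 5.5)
Your count is the same as the paper's: $h^0(\mathrm{End}E(d))$ minus $(\dim\mathrm{Aut}(E)-1)$, and your arithmetic is correct. What you add is the justification the paper leaves implicit: the stable locus is open and nonempty, and stabilizers are the scalars by the argument of Lemma~\ref{Lem19}. That justification is the right one, but one step of it fails.

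\textbf{Wrong entry in the nonemptiness check.} For $0<m\le d$, choosing $C\neq 0$ does not help with stability. The only candidate destabilizer is the summand $\mathcal{O}(m)$. Any rank-one subsheaf of positive degree lands in it, since $H^0(\mathcal{O}(-k))=0$ for $k>0$, and $\Phi$-invariance passes to the saturation. This summand is $\Phi$-invariant exactly when the component $\mathcal{O}(m)\to\mathcal{O}(d)$ vanishes, i.e.\ when $B\in H^0(\mathcal{O}(d-m))$ is zero. So $B=0$, $C\neq 0$ gives a non-stable pair, and the stable locus is $\{B\neq 0\}$. Its nonemptiness is exactly where the hypothesis $m\le d$ enters, not $d+m\ge 0$. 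The same transposition appears in your description of $\mathrm{Aut}(E)$. With your convention for $\Phi$ ($B$ in the $(1,2)$ slot lying in $H^0(\mathcal{O}(d-m))$), the $H^0(\mathcal{O}(m))$ entry of an automorphism sits below the diagonal, as in the paper.

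\textbf{The case $d=0$.} Here $m=0$, and the stable locus is empty, since every constant matrix has an eigenvector. So the statement is not vacuous there; it asserts dimension $1$ for an empty set. Without stability your scalar-stabilizer step also fails: a generic $2\times 2$ matrix has a $2$-dimensional centralizer, so $4-3$ is not even the dimension of matrices modulo conjugation. Your argument, and the proposition itself, should be stated for $d>0$.
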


\begin{rem}
    If we let $m\leq0$ in the proposition, a similar computation gives the same dimension. However, stability imposes $|m|\leq d$ in this case.
\end{rem}

\begin{proof} 

We can represent $\Phi$ globally as a $2\times 2$ matrix $$\Phi=\begin{pmatrix} A & B \\ C & D \end{pmatrix},$$ where $A\in H^0(\mathcal{O}(d))$, $B\in H^0(\mathcal{O}(d-m))$, $C\in H^0(\mathcal{O}(d+m))$, and $D\in H^0(\mathcal{O}(d))$. The stability of the pair $(E,\Phi)$ in this case requires $m\leq d$. Counting how many degrees of freedom $\Phi$ has amounts to adding the dimensions of $H^0(\mathbb{P}^2,\mathcal{O}(i))$ for $i=d,d+m,d-m$. This gives

$$\sum_{i\in\{d-m,d,d+m\}}h^0(\mathbb{P}^2,\mathcal{O}(i))=2d^2+6d+m^2+4$$ 

degrees of freedom for $\Phi$. Subtracting by the action of the automorphism group of $E=\mathcal{O}\oplus\mathcal{O}(m)$ counts Higgs fields up to conjugation. If $\Psi$ is an automorphism of $E$, it can be written as 

$$\Psi=\begin{pmatrix} a & b \\ c & d \end{pmatrix}$$ with $a,d\in H^0(\mathbb{P}^2,\mathcal{O})=\mathbb{C}$, $b\in H^0(\mathbb{P}^2,\mathcal{O}(-m))$, and $c\in H^0(\mathbb{P}^2,\mathcal{O}(m))$. Also, conjugation of the Higgs field $\Phi$ is invariant up to scaling by a non-zero constant. I.e. if $c\in\mathbb{C}^*$, then 

$$(c\Psi)\cdot\Phi\cdot(c\Psi)^{-1}=\Psi\cdot\Phi\cdot\Psi^{-1}$$ and conjugation by $\Psi$ is unique up to a choice of an element in $\mathbb{C}^*$.

When $m>0$, then $b=0$ and there are two choices for $a,d$ and $\frac{(m+2)(m+1)}{2}$ choices for $c$. This gives a total of $2+\frac{(m+2)(m+1)}{2}$ degrees of freedom for a choice of $\Psi$. So in order to compute all degrees of freedom for Higgs fields up to conjugation, we must subtract $2d^2+6d+m^2+4$ by $1+\frac{(m+2)(m+1)}{2}$ giving the desired result.

Suppose now $m=0$, we have $a,b,c,d\in H^0(\mathbb{P}^2,\mathcal{O})=\mathbb{C}$. The correction needed to account for conjugation of the Higgs field is $4-1=3$. Thus the total number of Higgs fields for which the pair $(E,\Phi)$ is stable up to conjugation is $2d^2+6d+4-3=2d^2+6d+1$ as required.

\end{proof}

In deformation theory, one considers trace-free Higgs fields. Note that $$H^0(\mathbb{P}^2,\mathrm{End}E(d))=H^0(\mathbb{P}^2,\mathrm{End}_0E(d))\oplus H^0(\mathbb{P}^2,\mathcal{O}(d)),$$ where $\mathrm{End}_0$ denotes trace-free endomorphisms. Thus to get the total number of trace-free Higgs fields such that the pair $(E,\Phi)$ is stable, we must subtract $h^0(\mathbb{P}^2,\mathcal{O}(d))=\frac{1}{2}(d+1)(d+2)$ to the dimensions computed in Proposition \ref{Prop10}. In this case, these represent the dimension of the Zariski tangent space to $\mathcal{M}^{\mathrm{VW}}_{m,0,d}$ at the point $(E,\Phi)$. This yields

\begin{cor} 
Let $d\geq0$ and $0\leq m \leq d$ and let $E=\mathcal{O}\oplus\mathcal{O}(m)$. The dimension of the Zariski tangent space to $\mathcal{M}^{\mathrm{VW}}_{m,0,d}$ at the point $(E,\Phi)$ is $\frac{3}{2}d(d+3)$ when $m=0$ and $\frac{3}{2}d(d+3)+\frac{1}{2}m(m-3)+1$ when $m>0$. 
\end{cor}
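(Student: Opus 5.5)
The corollary follows from Proposition \ref{Prop10} once we remove the trace part, using the decomposition $H^0(\mathbb{P}^2,\mathrm{End}E(d))=H^0(\mathbb{P}^2,\mathrm{End}_0E(d))\oplus H^0(\mathbb{P}^2,\mathcal{O}(d))$ noted just before the statement. The trace summand consists of scalar endomorphisms $h\cdot\mathrm{Id}_E$ with $h\in H^0(\mathbb{P}^2,\mathcal{O}(d))$. These commute with every automorphism $\Psi$ of $E$, so conjugation preserves the decomposition and acts trivially on the second factor. Hence the quotient computed in Proposition \ref{Prop10} is the product of the trace-free quotient with the affine space $H^0(\mathbb{P}^2,\mathcal{O}(d))$.

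Stability is also unaffected by the trace part. A sub-line bundle $L\subset E$ is $\Phi$-invariant if and only if it is $(\Phi-\frac12\mathrm{Tr}\Phi\cdot\mathrm{Id})$-invariant. Therefore the stable locus in $H^0(\mathrm{End}E(d))$ is exactly (stable trace-free locus) $\times\, H^0(\mathcal{O}(d))$. The dimension of trace-free stable Higgs fields modulo conjugation is thus the number from Proposition \ref{Prop10} minus $h^0(\mathbb{P}^2,\mathcal{O}(d))=\frac12(d+1)(d+2)$. This is the dimension that, following the paper's setup (justified later in Section \ref{Sec4}), is identified with the Zariski tangent space to $\mathcal{M}^{\mathrm{VW}}_{m,0,d}$ at $(E,\Phi)$.

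It remains to do the arithmetic. For $m=0$:
\[
2d^2+6d+1-\tfrac12(d^2+3d+2)=\tfrac32 d^2+\tfrac92 d=\tfrac32 d(d+3).
\]
For $m>0$:
\[
2d^2+6d+\tfrac12 m(m-3)+2-\tfrac12(d^2+3d+2)=\tfrac32 d(d+3)+\tfrac12 m(m-3)+1.
\]

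The only step needing care is showing that the quotient dimension splits additively. The trace factor is a trivially acted-on direct summand, so the generic stabilizer of the conjugation action does not change. In particular, the same correction term ($1+\frac{(m+1)(m+2)}2$ or $3$) is subtracted as in Proposition \ref{Prop10}.
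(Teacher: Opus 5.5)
Your proposal is correct and follows the paper's argument: you use the splitting $H^0(\mathrm{End}E(d))=H^0(\mathrm{End}_0E(d))\oplus H^0(\mathcal{O}(d))$ to subtract $h^0(\mathcal{O}(d))=\frac{1}{2}(d+1)(d+2)$ from the counts of Proposition \ref{Prop10}, and your arithmetic checks out in both cases. Your extra observations supply justification the paper leaves implicit: the scalar summand is central, so conjugation does not affect it, and it plays no role in $\Phi$-invariance of subsheaves.
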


From now on, we assume that the Higgs field $\Phi$ is trace-free.

\subsection{Wild Vafa-Witten Pairs With $\ell=1$}\label{3.2}

Let $(E,\Phi)$ be a rank-two wild Vafa-Witten bundle with $\Phi\in H^0(\mathbb{P}^2,\mathrm{End}_0E(1))$. In this case, $\det \Phi \in H^0(\mathbb{P}^2,\mathcal{O}(2))$ and the spectral cover of $\Phi$ is a double cover $f:Y\to \mathbb{P}^2$ branched along the conic $\rho=\det \Phi$. By the spectral correspondence, if the conic is smooth, then $(E,\Phi)$ is stable, and there exists a line bundle $L\to Y$ such that $E=f_* L$ and $\Phi$ is determined by the multiplication map induced by the tautological section on $\mathrm{Tot}(L)$. Since $f$ is a double cover of $\mathbb{P}^2$ branched along a smooth conic, Schwarzenberger's classification implies that $Y\cong\mathbb{P}^1\times\mathbb{P}^1$ \cite{MR0137712}. The push-forwards of line bundles on $\mathbb{P}^1\times\mathbb{P}^1$ are explicitly known; they coincide with the bundles $E_{r,s}^\rho$ introduced in Section \ref{Pre}. 
In this section, we compute the dimensions $h^i(\mathbb{P}^2,\mathrm{End}_0E_{r,s}^\rho(d))$ for $i\geq0$ and $d\geq0$, which are necessary for the deformation theory of pairs $(E_{r,s}^\rho,\Phi)$ (see Section \ref{Sec4}). The case where $s=0$ and $r\geq3$ is treated in \cite[Proposition 6.1 and Corollary 6.3]{MR3285779} and \cite[Corollary 5.4]{SSR:11}; the proof for the remaining cases proceeds analogously and is included here for completeness.

\begin{prop}\label{Prop11}
    Let $d\geq0$ and assume without loss of generality that $s\geq r$. The dimension of the zeroth cohomology group for $\mathrm{End}_0E_{r,s}^\rho(d)$ is given by 

$$h^0(\mathbb{P}^2,\mathrm{End}_0E_{r,s}^\rho(d))=\frac{1}{2}d(d+1)+\delta_{rs,d}(2-s+r+d)(2-r+s+d),$$ where $\delta_{rs,d}=1$ if $d\geq s-r-1$ and zero otherwise.
\end{prop}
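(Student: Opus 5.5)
The plan is to lift the computation to $\mathbb{P}^1\times\mathbb{P}^1$ through the double cover $f=f_\rho$ and compute $h^0$ there. Write $L=\mathcal{O}(r,s)$, so that $E=E^\rho_{r,s}=f_*L$. Let $\sigma$ be the covering involution and $R\subset\mathbb{P}^1\times\mathbb{P}^1$ the ramification curve. Then $\sigma$ swaps the two factors, so $\sigma^*\mathcal{O}(a,b)=\mathcal{O}(b,a)$. The curve $R$ is the preimage of the conic and $2R=f^*H$, hence $\mathcal{O}(R)=\mathcal{O}(1,1)$.

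\textbf{Step 1: the pullback sequence.} For any line bundle $L$ on a double cover one has
$$0\to \sigma^*L(-R)\to f^*f_*L\to L\to 0,$$
where the second map is evaluation. Dualizing and tensoring gives
$$0\to\mathcal{O}(d,d)\to f^*E^*\otimes L\otimes f^*\mathcal{O}(d)\to L\otimes\sigma^*L^{-1}(R)\otimes f^*\mathcal{O}(d)\to0 .$$
With $k=s-r\geq0$ and $f^*\mathcal{O}(d)=\mathcal{O}(d,d)$, the quotient is $\mathcal{O}(d+1-k,\,d+1+k)$. The projection formula gives
$$\mathrm{End}E(d)=E^*\otimes f_*L\otimes\mathcal{O}(d)=f_*\big(f^*E^*\otimes L\otimes f^*\mathcal{O}(d)\big).$$
Since $f$ is finite, $h^0(\mathbb{P}^2,\mathrm{End}E(d))$ equals $h^0$ of the middle term above.

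\textbf{Step 2: cohomology on $\mathbb{P}^1\times\mathbb{P}^1$.} Because $H^1(\mathcal{O}(d,d))=0$ for $d\geq -1$, the long exact sequence shows that $h^0$ of the middle term is
$$(d+1)^2+h^0(\mathcal{O}(d+1-k,d+1+k)).$$
The second summand is $(d+2-k)(d+2+k)$ when $d+1-k\geq0$, that is when $d\geq s-r-1$. Otherwise it is $0$. This is exactly where the indicator $\delta_{rs,d}$ comes from.

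\textbf{Step 3: removing the trace.} Use $\mathrm{End}E(d)=\mathrm{End}_0E(d)\oplus\mathcal{O}(d)$ and subtract $h^0(\mathbb{P}^2,\mathcal{O}(d))=\frac12(d+1)(d+2)$. Since $(d+1)^2-\frac12(d+1)(d+2)=\frac12 d(d+1)$, this gives the stated formula. As a consistency check, the subsection $\mathcal{O}(d,d)$ pushes forward to $f_*\mathcal{O}\otimes\mathcal{O}(d)=\mathcal{O}(d)\oplus\mathcal{O}(d-1)$, and the trace summand sits inside it.

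\textbf{Main obstacle.} The step that needs the most care is Step 1. It requires justifying the pullback sequence with the correct twist $\sigma^*L(-R)$ and the identification $\mathcal{O}(R)=\mathcal{O}(1,1)$. It also requires checking that $\sigma$ exchanges the two rulings, since otherwise the quotient would be $\mathcal{O}(d+1,d+1)$ and would not depend on $r,s$. I would verify the sequence locally, using $f_*\mathcal{O}=\mathcal{O}\oplus\mathcal{O}(-1)$ and the fact that $f^*f_*L\to L\oplus\sigma^*L$ is an isomorphism off $R$ and drops rank along $R$. Everything after Step 1 is routine Künneth bookkeeping.
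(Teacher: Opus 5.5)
Your proposal is correct and follows essentially the same route as the paper. Both push the computation up to $\mathbb{P}^1\times\mathbb{P}^1$ by the projection formula, dualize and twist the evaluation sequence to get $0\to\mathcal{O}(d,d)\to f^*(E^\rho_{r,s})^*(r+d,s+d)\to\mathcal{O}(d+1-k,d+1+k)\to0$, use $H^1(\mathcal{O}(d,d))=0$, and subtract $h^0(\mathcal{O}(d))$ for the trace.

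The only difference is how the kernel $\mathcal{O}(s-1,r-1)$ is identified. The paper uses a determinant count, $K\otimes\mathcal{O}(r,s)=f^*\det E^\rho_{r,s}=\mathcal{O}(r+s-1,r+s-1)$. This avoids your ``main obstacle'' about $\sigma^*L(-R)$ and the rulings, at the cost of relying on the known value of $c_1$.

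One slip: $f^*C=2R$ for the branch conic $C$, so $2R\sim f^*(2H)$, not $f^*H$. Your conclusion $\mathcal{O}(R)=\mathcal{O}(1,1)$ is still right.
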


\begin{proof} 

Let $d>0$ and notice that 

\begin{align*}
H^0(\mathbb{P}^2,\mathrm{End}E_{r,s}^\rho(d))&= H^0(\mathbb{P}^2,E_{r,s}^\rho\otimes (E_{r,s}^\rho)^*\otimes\mathcal{O}(d)) \\
&=H^0(\mathbb{P}^1\times\mathbb{P}^1,\mathcal{O}(r,s)\otimes f^*((E_{r,s}^\rho)^*\otimes\mathcal{O}(d))) \\
&=H^0(\mathbb{P}^1\times\mathbb{P}^1,f^*(E_{r,s}^\rho)^*\otimes \mathcal{O}(r,s)\otimes\mathcal{O}(d,d)) \\ 
&=H^0(\mathbb{P}^1\times\mathbb{P}^1,f^*(E_{r,s}^\rho)^*(r+d,s+d))
\end{align*}

using the projection formula with $f:\mathbb{P}^1\times\mathbb{P}^1\to\mathbb{P}^2$ and the fact that $E_{r,s}^\rho=f_*\mathcal{O}(r,s)$ and $f^*\mathcal{O}(d) = \mathcal{O}(d,d)$. So we compute the dimension of the group $$H^0(\mathbb{P}^1\times\mathbb{P}^1,f^*(E_{r,s}^\rho)^*(r+d,s+d))$$ as it is easier to compute cohomology on $\mathbb{P}^1$. The map $f^*E_{r,s}^\rho\to \mathcal{O}(r,s)$ is surjective. i.e. it has a kernel, so there is the short exact sequence 

$$0\to K\to f^*E_{r,s}^\rho\to \mathcal{O}(r,s)\to 0$$ where $K$ is the kernel. Since $K$ is a line bundle on $\mathbb{P}^1\times\mathbb{P}^1$, it takes the form $K=\mathcal{O}(a,b)$ for some $a,b\in\mathbb{Z}$. Then, $\det(f^*E_{r,s}^\rho)= K \otimes \mathcal{O}(r,s) = \mathcal{O}(a+r,b+s)$. 
Since
$$\det(f^*E_{r,s}^\rho)\cong\mathcal{O}(c_1(E_{r,s}^\rho),c_1(E_{r,s}^\rho))=\mathcal{O}(r+s-1,r+s-1),$$ this implies $a=s-1$ and $b=r-1$. So, one obtains the short exact sequence 

$$0\to\mathcal{O}(s-1,r-1)\to f^*E_{r,s}^\rho\to\mathcal{O}(r,s)\to 0.$$ Taking the dual sequence and tensoring by $\mathcal{O}(r+d,s+d)$ gives

$$0\to\mathcal{O}(d,d)\to f^*(E_{r,s}^\rho)^*(r+d,s+d)\to\mathcal{O}(1-s+r+d,1-r+s+d)\to0.$$ This induces the long exact sequence of cohomologies 

\begin{align*}
0\to H^0(\mathcal{O}(d,d))\to H^0(f^*(E_{r,s}^\rho)^*(r+d,s+d)) \to H^0(\mathcal{O}(1-s+r+d,1-r+s+d)) &\to \\ \to H^1(\mathcal{O}(d,d))\to\cdots.
\end{align*}

But $H^1(\mathcal{O}(d,d))=0$ on $\mathbb{P}^1\times\mathbb{P}^1$, so it becomes the short exact sequence

$$0\to H^0(\mathcal{O}(d,d))\to H^0(f^*(E_{r,s}^\rho)^*(r+d,s+d))\to H^0(\mathcal{O}(1-s+r+d,1-r+s+d))\to0.$$ From this,

\begin{align*}
h^0(f^*(E_{r,s}^\rho)^*(r+d,s+d)) &= h^0(\mathcal{O}(d,d))+h^0(\mathcal{O}(1-s+r+d,1-r+s+d)) \\
&=(d+1)^2+\delta_{rs,d}(2-s+r+d)(2-r+s+d),
\end{align*}

where $\delta_{rs,d}=1$ if $d\geq s-r-1$ and zero otherwise. Subtracting by the trace gives 

$$h^0(\mathbb{P}^2,\mathrm{End}_0E_{r,s}^\rho(d))=\frac{1}{2}d(d+1)+\delta_{rs,d}(2-s+r+d)(2-r+s+d).$$

\end{proof}

Using these calculations, we may determine which of the $E_{r,s}^\rho$ are stable. $E_{r,s}^\rho$ is simple if $$h^0(\mathbb{P}^2,\mathrm{End}_0E_{r,s}^\rho)=0,$$ and rank-two bundles on $\mathbb{P}^2$ are simple if and only if they are stable. Thus, by setting $d=0$ in the above proposition, we get 

$$h^0(\mathbb{P}^2,\mathrm{End}_0E_{r,s}^\rho)=(2-s+r)(2-r+s)$$ for $s\leq r+1$ and $h^0(\mathbb{P}^2,\mathrm{End}_0E_{r,s}^\rho)=1$ for $s> r+1$. So, $E_{r,s}^\rho$ is simple and hence stable if and only if $s\geq r+2$.

\begin{prop}\label{Prop15}
    Let $d\geq0$, $r\in\mathbb{Z}$ and assume without loss of generality that $s\geq r$. Then $H^2(\mathbb{P}^2,\mathrm{End}_0E_{r,s}^\rho(d))=0$.
\end{prop}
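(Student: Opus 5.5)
We follow the same strategy as the proof of Proposition \ref{Prop11}: use the projection formula for $f=f_\rho:\mathbb{P}^1\times\mathbb{P}^1\to\mathbb{P}^2$ to move the computation to $\mathbb{P}^1\times\mathbb{P}^1$, where cohomology of line bundles is given by the K\"unneth formula. Since $f$ is finite, $R^if_*=0$ for $i>0$, so $H^2(\mathbb{P}^2,f_*\mathcal{G})=H^2(\mathbb{P}^1\times\mathbb{P}^1,\mathcal{G})$ for any coherent sheaf $\mathcal{G}$. Applying this as in Proposition \ref{Prop11} gives
$$H^2(\mathbb{P}^2,\mathrm{End}E_{r,s}^\rho(d))\cong H^2(\mathbb{P}^1\times\mathbb{P}^1,f^*(E_{r,s}^\rho)^*(r+d,s+d)).$$
Because $\mathrm{End}E_{r,s}^\rho(d)=\mathrm{End}_0E_{r,s}^\rho(d)\oplus\mathcal{O}(d)$, the group $H^2(\mathbb{P}^2,\mathrm{End}_0E_{r,s}^\rho(d))$ is a direct summand of the above. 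It therefore suffices to show that the right-hand side vanishes.

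For that, I take the short exact sequence already derived in the proof of Proposition \ref{Prop11}:
$$0\to\mathcal{O}(d,d)\to f^*(E_{r,s}^\rho)^*(r+d,s+d)\to\mathcal{O}(1-s+r+d,1-r+s+d)\to0.$$
Its long exact sequence shows that $H^2$ of the middle term vanishes as soon as $H^2$ of both outer terms vanishes. By K\"unneth, $H^2(\mathcal{O}(a,b))=H^1(\mathbb{P}^1,\mathcal{O}(a))\otimes H^1(\mathbb{P}^1,\mathcal{O}(b))$, which is zero whenever $a\geq-1$ or $b\geq-1$.

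The left term $\mathcal{O}(d,d)$ with $d\geq0$ clearly has vanishing $H^2$. For the right term, the key point is that its second index is $1-r+s+d\geq1$, because $s\geq r$ and $d\geq0$. Hence $H^1(\mathbb{P}^1,\mathcal{O}(1-r+s+d))=0$ and $H^2$ of the right term vanishes. This holds even when the first index $1-s+r+d$ is very negative, which happens when $s-r$ is large.

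There is no real obstacle here. The one point needing care is the direct-summand step: we want $H^2$ of $\mathrm{End}_0$ to vanish, not just its Euler characteristic to be computable. This is handled by the splitting of $\mathrm{End}$ into trace-free part and trace, valid in characteristic zero. One could alternatively note $H^2(\mathbb{P}^2,\mathcal{O}(d))=0$, but the summand argument already suffices. The rest is a direct check of the K\"unneth vanishing criterion.
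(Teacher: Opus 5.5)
Your proof is correct, but it takes a different route from the paper.

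\textbf{The paper's route.} The paper uses Serre duality together with the self-duality of $\mathrm{End}_0E$ to write $H^2(\mathrm{End}_0E_{r,s}^\rho(d))^*\cong H^0(\mathrm{End}_0E_{r,s}^\rho(-d-3))$. It then asserts that this space vanishes by ``plugging in'' to Proposition \ref{Prop11}.

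\textbf{Your route.} You stay in cohomological degree two. You move the computation to $\mathbb{P}^1\times\mathbb{P}^1$ using finiteness of $f$ and the projection formula. You then use the same extension $0\to\mathcal{O}(d,d)\to f^*(E_{r,s}^\rho)^*(r+d,s+d)\to\mathcal{O}(1-s+r+d,1-r+s+d)\to0$ and kill $H^2$ of both outer terms by K\"unneth. The key observation is that the second index $1+(s-r)+d$ is positive.

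\textbf{Comparison.} Both arguments rest on the same exact sequence, but yours has a concrete advantage. Proposition \ref{Prop11} is only proved for nonnegative twists, since it uses $h^0(\mathcal{O}(d,d))=(d+1)^2$ and $H^1(\mathcal{O}(d,d))=0$. Literally substituting $\tilde d=-d-3$ into its formula gives a nonzero value; for example, $\tilde d=-3$ gives $\tfrac12\tilde d(\tilde d+1)=3$, and the $\delta$-term is $0$ because $s\geq r$. So the paper's step really requires rerunning the argument using only left exactness. One notes that both $\mathcal{O}(\tilde d,\tilde d)$ and $\mathcal{O}(1-s+r+\tilde d,1-r+s+\tilde d)$ have no sections because their first index is negative.

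Your version needs no duality and no case split on $s-r$. It in fact shows $H^2(\mathrm{End}E_{r,s}^\rho(d))=0$ for all $d\geq-1$. This range is sharp: for $E_{r,r}^\rho\cong\mathcal{O}(r)\oplus\mathcal{O}(r-1)$, the twist $d=-2$ already has a summand $\mathcal{O}(-3)$.
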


\begin{proof}
    Use Serre duality on $H^2(\mathbb{P}^2,\mathrm{End}_0E_{r,s}^\rho(d))^*$ to get 

$$H^2(\mathbb{P}^2,\mathrm{End}_0E_{r,s}^\rho(d))^*\cong H^0(\mathbb{P}^2,\mathrm{End}_0E_{r,s}^\rho(-d-3))=H^0(\mathbb{P}^2,\mathrm{End}_0E_{r,s}^\rho(\tilde{d}))$$ where $\tilde{d}:=-d-3<0$. Plugging in the cases $s=r,r+1,r+2$ and $s=r+k$ for $k>2$ yields $h^0(\mathbb{P}^2,\mathrm{End}_0E_{r,s}^\rho(\tilde{d}))=0$.
\end{proof}

In the remainder, we suppress $\mathbb{P}^2$ from the cohomology groups to simplify notation.  

\begin{prop}\label{3.4}
    Let $d\geq0$. For $s=r$, $s=r+1$, or $s=r+2$ we have $h^1(\mathrm{End}_0E_{r,s}^\rho(d))=0$.
\end{prop}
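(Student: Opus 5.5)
We use property v) of the bundles $E_{r,s}^\rho$: for $s=r,r+1,r+2$ they are homogeneous, namely $\mathcal{O}(r)\oplus\mathcal{O}(r-1)$, $\mathcal{O}(r)\oplus\mathcal{O}(r)$ and $T\mathbb{P}^2\otimes\mathcal{O}(r-1)$. Twisting $E$ by a line bundle does not change $\mathrm{End}_0E$, so it suffices to prove $H^1(\mathrm{End}_0E(d))=0$ for $d\geq 0$ and $E\in\{\mathcal{O}\oplus\mathcal{O}(-1),\ \mathcal{O}\oplus\mathcal{O},\ T\mathbb{P}^2\}$. We treat the three cases separately.

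\textbf{The split cases.} For $E=\mathcal{O}\oplus\mathcal{O}(m)$ with $m\in\{0,-1\}$ we have $\mathrm{End}_0E\cong\mathcal{O}\oplus\mathcal{O}(m)\oplus\mathcal{O}(-m)$. Hence $\mathrm{End}_0E(d)$ is a direct sum of line bundles $\mathcal{O}(k)$ on $\mathbb{P}^2$, and $H^1(\mathbb{P}^2,\mathcal{O}(k))=0$ for every $k\in\mathbb{Z}$. This settles $s=r$ and $s=r+1$ immediately.

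\textbf{The case $s=r+2$.} Here $\mathrm{End}_0T\mathbb{P}^2\cong T\mathbb{P}^2\otimes\Omega^1\big/\mathcal{O}$. We twist the Euler sequence $0\to\mathcal{O}\to\mathcal{O}(1)^{3}\to T\mathbb{P}^2\to0$ by $\Omega^1(d)$ to get
\[0\to\Omega^1(d)\to\Omega^1(d+1)^{3}\to T\mathbb{P}^2\otimes\Omega^1(d)\to0.\]
By Bott's formula, $H^1(\Omega^1(k))=0$ for $k\neq0$ and $H^2(\Omega^1(k))=0$ for $k\geq -2$. So for $d\geq 0$ the long exact sequence gives $H^1(\mathrm{End}\,T\mathbb{P}^2(d))\cong H^1(\Omega^1(d+1))^3=0$; this uses $d+1\ge1$ and $H^2(\Omega^1(d))=0$.

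Next, the trace splitting $\mathrm{End}\,E(d)=\mathrm{End}_0E(d)\oplus\mathcal{O}(d)$ is a direct sum. Therefore $H^1(\mathrm{End}_0E(d))$ is a direct summand of $H^1(\mathrm{End}\,E(d))=0$, and so it vanishes.

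As a cross-check, one could instead argue on $\mathbb{P}^1\times\mathbb{P}^1$ as in the proof of Proposition \ref{Prop11}. However, $H^1$ on $\mathbb{P}^2$ does not agree with $H^1$ on the cover, because $f_*\mathcal{O}=\mathcal{O}\oplus\mathcal{O}(-1)$ introduces an extra twist. For that reason the homogeneous description is cleaner.

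The main obstacle is the case $s=r+2$: we need the precise Bott vanishing for $\Omega^1(k)$, in particular making sure the boundary terms vanish at $d=0$. There $H^1(\Omega^1)=\mathbb{C}$ appears in the sequence but injects harmlessly, since $H^0(T\otimes\Omega^1)\to H^1(\Omega^1)$ is surjective via the identity and trace. Alternatively, we can compute the Euler characteristic $\chi(\mathrm{End}_0T\mathbb{P}^2(d))$ by Riemann--Roch and compare it with $h^0$ from Proposition \ref{Prop11} and $h^2=0$ from Proposition \ref{Prop15}, which gives $h^1=0$ directly. We would use this second route as a check.
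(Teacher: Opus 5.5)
Your proof is correct, but your main argument differs from the paper's. What you offer as a cross-check is exactly the paper's proof. The paper computes $\chi(\mathrm{End}_0E_{r,s}^\rho(d))=\frac{3}{2}d(d+3)+4-(s-r)^2$ by Hirzebruch--Riemann--Roch. It then notes that Proposition~\ref{Prop11} gives $h^0$ equal to this same number when $s-r\leq 2$, and that $h^2=0$ by Proposition~\ref{Prop15}, which forces $h^1=0$.

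You instead pass to the homogeneous models of property v) and compute $H^1$ directly. For $s=r,r+1$ you split into line bundles; for $T\mathbb{P}^2$ you twist the Euler sequence by $\Omega^1(d)$, apply Bott vanishing, and finish with the trace splitting. Your route uses only standard facts on $\mathbb{P}^2$ and does not depend on the cover computations behind Propositions~\ref{Prop11} and~\ref{Prop15}. However, it rests on Schwarzenberger's identification in property v) and says nothing for $s-r>2$, where the bundles are not homogeneous. The paper's Euler-characteristic bookkeeping is uniform in $s-r$ and is reused verbatim for Proposition~\ref{3.5}.

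Some side remarks in your write-up are inaccurate, though none is load-bearing.
\begin{itemize}
\item On $\mathbb{P}^2$, $H^2(\Omega^1(k))=0$ holds only for $k\geq-1$; for instance $H^2(\Omega^1(-2))\cong H^0(\Omega^1(2))^*\cong\mathbb{C}^3$. You only need $k=d\geq0$, so this does not affect the proof.
\item The long exact sequence does not give an isomorphism $H^1(\mathrm{End}\,T\mathbb{P}^2(d))\cong H^1(\Omega^1(d+1))^3$. It places $H^1(\mathrm{End}\,T\mathbb{P}^2(d))$ between $H^1(\Omega^1(d+1))^3=0$ and $H^2(\Omega^1(d))=0$, which suffices. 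So $H^1(\Omega^1)$ never enters, and your separate discussion of $d=0$ is unnecessary.
\item Your reason for avoiding the cover is wrong. Since $f$ is finite and $\mathrm{End}\,E_{r,s}^\rho(d)\cong f_*\big(f^*(E_{r,s}^\rho)^*(r+d,s+d)\big)$ by the projection formula, you get $H^i(\mathbb{P}^2,\mathrm{End}\,E_{r,s}^\rho(d))\cong H^i(\mathbb{P}^1\times\mathbb{P}^1,f^*(E_{r,s}^\rho)^*(r+d,s+d))$ for every $i$. The twist by $f_*\mathcal{O}=\mathcal{O}\oplus\mathcal{O}(-1)$ only appears when comparing $H^i(\mathbb{P}^2,V)$ with $H^i(\mathbb{P}^1\times\mathbb{P}^1,f^*V)$.
\end{itemize}

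In fact the cover gives a third proof. Take the sequence $0\to\mathcal{O}(d,d)\to f^*(E_{r,s}^\rho)^*(r+d,s+d)\to\mathcal{O}(1-k+d,1+k+d)\to0$ from the proof of Proposition~\ref{Prop11}, with $k=s-r\leq2$. Both outer terms have vanishing $H^1$ by K\"unneth, since $1-k+d\geq-1$ and $1+k+d\geq1$, so $h^1=0$ directly.
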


\begin{proof}
    Let $d\geq0$. By Hirzeburch-Riemann-Roch, 

    $$h^1(\mathrm{End}_0E_{r,s}^\rho(d))=h^0(\mathrm{End}_0E_{r,s}^\rho(d))+h^2(\mathrm{End}_0E_{r,s}^\rho(d))-\deg(\mathrm{ch}(\mathrm{End}_0E_{r,s}^\rho(d)) \cdot\mathrm{td}(\mathbb{P}^2))_2.$$
We have
$$\mathrm{ch}(\mathrm{End}_0E_{r,s}^\rho)=3+(c_1(E_{r,s}^\rho)^2-4c_2(E_{r,s}^\rho))$$ with $c_1(E_{r,s}^\rho)=(r+s-1)\cdot H$ and $c_2(E_{r,s}^\rho)=\frac{1}{2}(r(r-1)+s(s-1))\cdot H^2$,

$$\mathrm{ch}(\mathcal{O}(d))=1+dH+\frac{1}{2}d^2H^2,$$
and
$$\mathrm{td}(\mathbb{P}^2)=1+\frac{3}{2}H+H^2.$$ 
Taking the product of the above and extracting the coefficient of $H^2$ yields

$$h^1(\mathrm{End}_0E_{r,s}^\rho(d)) = h^0(\mathrm{End}_0E_{r,s}^\rho(d))+h^2(\mathrm{End}_0E_{r,s}^\rho(d))-\left(\frac{3}{2}d(d+3)-r^2-s^2+2rs+4\right).$$ 

The result then follows from Propositions \ref{Prop11} and \ref{Prop15}.

\end{proof}

We end by considering the case $s>r+2$.

\begin{prop}\label{3.5}
    Let $d\geq0$, $r\in\mathbb{Z}$ and assume $s=r+k$ for $k>2$. Then the following hold:
    \begin{enumerate}[i)]
        \item $h^1(\mathrm{End}_0E_{r,s}^\rho)=k^2-4$;
        \item $h^1(\mathrm{End}_0E_{r,s}^\rho(1))=k^2-9$;
        \item $h^1(\mathrm{End}_0E_{r,s}^\rho(d))=0$ for $d>1$.
    \end{enumerate}
\end{prop}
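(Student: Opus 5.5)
The plan is to repeat the Hirzebruch--Riemann--Roch argument from the proof of Proposition \ref{3.4}, now feeding in the values of $h^0$ and $h^2$ for $s=r+k$ with $k>2$. That proof gives, for every $d\geq0$,
$$h^1(\mathrm{End}_0E_{r,s}^\rho(d))=h^0(\mathrm{End}_0E_{r,s}^\rho(d))+h^2(\mathrm{End}_0E_{r,s}^\rho(d))-\left(\tfrac{3}{2}d(d+3)-(s-r)^2+4\right).$$
By Proposition \ref{Prop15} the $h^2$ term vanishes. Substituting $s-r=k$, the formula reduces to
$$h^1=h^0(\mathrm{End}_0E_{r,s}^\rho(d))-\tfrac{3}{2}d(d+3)+k^2-4.$$
What remains is to evaluate $h^0$ using Proposition \ref{Prop11}, splitting according to whether the indicator $\delta_{rs,d}$ equals $0$ or $1$.

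For $d=0$: since $k>2$ we have $0<k-1$, so $\delta_{rs,0}=0$ and $h^0=0$. This is consistent with $E_{r,s}^\rho$ being simple, as noted after Proposition \ref{Prop11}. The formula then gives $h^1=k^2-4$, which is part i).

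For $d=1$: since $k\geq3$ we have $1<k-1$, so again $\delta=0$ and $h^0=\tfrac12\cdot1\cdot2=1$. Hence $h^1=1-6+k^2-4=k^2-9$, which is part ii).

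For $d>1$, Proposition \ref{Prop11} as stated gives
$$h^0=\tfrac12 d(d+1)+\delta_{rs,d}\,(2-k+d)(2+k+d).$$
This is where I expect the main obstacle to lie. When $\delta=1$, the sum is $\tfrac32 d(d+3)-k^2+4$, so it cancels exactly and gives $h^1=0$. When $2\leq d<k-1$, however, $\delta=0$, and a naive substitution gives the nonzero value $h^1=k^2-4-d(d+4)$. So the vanishing in iii) requires a closer look at the $h^0$ count itself.

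The mechanism should be that the product $(2-k+d)(2+k+d)$ counts sections of $\mathcal{O}(1-k+d,\,1+k+d)$ on $\mathbb{P}^1\times\mathbb{P}^1$, and this count is correct whenever $d\geq k-2$. For smaller $d$ it must be replaced by the true $h^0$, namely $0$. I would therefore redo the long exact sequence from the proof of Proposition \ref{Prop11}, keeping track of the $H^1$ terms of $\mathcal{O}(1-k+d,\,1+k+d)$ and of the Künneth contributions. I would also recheck the Chern character coefficient: the $\mathrm{ch}_2$ term of $\mathrm{End}_0$ is $c_1^2-4c_2$, and $\mathrm{td}$ enters the degree-two part through the rank $3$. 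My aim would be to show that for $d\geq2$ the relevant $h^1$ on $\mathbb{P}^1\times\mathbb{P}^1$ pushes down to zero on $\mathbb{P}^2$.

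If this reconciliation fails, I would instead establish iii) directly. The dual sequence
$$0\to\mathcal{O}(d,d)\to f^*(E_{r,s}^\rho)^*(r+d,s+d)\to\mathcal{O}(1-k+d,\,1+k+d)\to0$$
computes $H^1(\mathrm{End}\,E_{r,s}^\rho(d))$ via the projection formula. By Künneth, $H^1(\mathcal{O}(1-k+d,\,1+k+d))=H^1(\mathcal{O}(1-k+d))\otimes H^0(\mathcal{O}(1+k+d))$. I would then compare this with the trace summand and check the relevant dimensions, being careful about exactly where the vanishing threshold in $d$ falls.
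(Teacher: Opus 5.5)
Your treatment of i) and ii) is the paper's argument (the Hirzebruch--Riemann--Roch identity from Proposition \ref{3.4}, $h^2=0$ from Proposition \ref{Prop15}, and $h^0$ from Proposition \ref{Prop11} with $\delta_{rs,d}=0$), and it is correct. For iii), the obstacle you found is real and cannot be removed. Your value $h^1=k^2-(d+2)^2$ in the range $\delta_{rs,d}=0$ is the true value. It vanishes at $d=k-2$, so iii) fails exactly when $2\le d\le k-3$, which happens for every $k\ge 5$.

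Neither of your proposed repairs can work. The $h^0$ count needs no correction: when $1-k+d<0$ one has $h^0(\mathcal{O}(1-k+d,1+k+d))=0$, which is precisely what $\delta_{rs,d}=0$ records. The Euler characteristic $\chi(\mathrm{End}_0E^\rho_{r,r+k}(d))=\frac32 d(d+3)+4-k^2$ is also correct. Nor can anything ``push down to zero'': $f$ is finite, so $H^1(\mathbb{P}^2,\mathrm{End}\,E^\rho_{r,s}(d))\cong H^1(\mathbb{P}^1\times\mathbb{P}^1,f^*(E^\rho_{r,s})^*(r+d,s+d))$.

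Your K\"unneth fallback, carried out, settles the question. Since $H^1$ and $H^2$ of $\mathcal{O}(d,d)$ vanish for $d\ge 0$, and $H^1(\mathbb{P}^2,\mathcal{O}(d))=0$ so the trace summand contributes nothing, the dual sequence gives
\[
h^1(\mathrm{End}_0E^\rho_{r,r+k}(d))=h^1(\mathbb{P}^1,\mathcal{O}(1-k+d))\cdot h^0(\mathbb{P}^1,\mathcal{O}(1+k+d))=\max\{0,(k-d-2)(k+d+2)\}.
\]
This agrees with the HRR computation. For instance, $k=5$, $d=2$ gives $h^0=3$, $\chi=-6$, and $h^1=9\neq 0$.

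The paper's own proof has the same hole. It is the same one-line substitution and silently takes $\delta_{rs,d}=1$ for every $d>1$. The same assumption reappears in Section \ref{Sec4}, where $h^0(\mathrm{End}_0E^\rho_{r,r+k}(d))=\frac32 d(d+3)+4-k^2$ is used for all $d>1$. So the paper does not prove iii) either.

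The correct conclusion is that $h^1(\mathrm{End}_0E^\rho_{r,r+k}(d))=0$ if and only if $d\ge k-2$. Consequently iii) as stated holds only for $k\in\{3,4\}$. Rather than searching for a reconciliation, your write-up should record this corrected statement together with the counterexample.
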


\begin{proof}
    From Proposition \ref{3.4}, we have the equation $$h^1(\mathrm{End}_0E_{r,r+k}^\rho(d))=-\frac{3}{2}d(d+3)+r^2+(r+k)^2-2r(r+k)-4+h^0(\mathrm{End}_0E_{r,r+k}^\rho(d)).$$ Combined with Proposition \ref{Prop11}, this yields the desired result. 
\end{proof}

\subsection{Wild Vafa-Witten Pairs With $\ell=2$}\label{3.3}

Let $(E,\Phi)$ be a rank-two wild Vafa-Witten bundle with $\Phi\in H^0(\mathbb{P}^2,\mathrm{End}_0E(2))$. This time, $\det \Phi \in H^0(\mathbb{P}^2,\mathcal{O}(4))$ and the spectral cover of $\Phi$ is a double cover $g:Y\to \mathbb{P}^2$ branched along the quartic $\rho=\det\Phi$. Again, the spectral correspondence tells us that if the quartic is smooth, then $(E,\Phi)$ is stable and there exists a line bundle $L\to Y$ such that $E=g_* L$ and $\Phi$ is given by the multiplication map on $\mathrm{Tot}(L)$. And since $g$ is a double cover of $\mathbb{P}^2$ branched along a smooth quartic, Schwarzenberger's classification implies that $Y$ is the blow-up $\tilde{\mathbb{P}}^2_7$ of $\mathbb{P}^2$ along seven distinct points in general position \cite{MR0137712}. The push-forwards of the line bundles $L^{p,t_i}$ on $\tilde{\mathbb{P}}^2_7$ are the bundles $E_{p,t_i}^\rho$ introduced in Section \ref{Pre}. In this case, we are not able to compute the dimensions $h^i(\mathbb{P}^2,\mathrm{End}_0E_{p,t_i}^\rho(d))$ for $i\geq0$ and $d\geq0$. In the case $\ell=1$, the calculation of $h^0(\mathbb{P}^2,\mathrm{End}_0E_{r,s}^\rho(d))$ in Proposition \ref{Prop11} is greatly simplified by the fact that $h^1(\mathbb{P}^1\times\mathbb{P}^1,\mathcal{O}(d,d))=0$, for all $d\geq0$. For the $\ell=2$ case, an analogous simplification does not occur because $h^1(\tilde{\mathbb{P}}^2_7,L^{3d,-d})=7d$, which is non-zero when $d>0$. We can nonetheless determine which of the bundles $E_{p,t_i}^\rho$ are stable. Note that when $E_{p,t_i}^\rho$ is stable, then $(E_{p,t_i}^\rho,\Phi)$ is a stable wild Vafa-Witten pair for any $d\geq0$ and any $\Phi\in H^0(\mathbb{P}^2,\mathrm{End}_0E_{p,t_i}^\rho(d))$. 

\begin{prop} \label{6.2}

    Suppose $p\geq-1$ and $t_1,t_2,\dots,t_7\in\mathbb{Z}$, then the bundles $E_{p,t_i}^\rho$ are stable if and only if 

\begin{align*}
\sum_{j=1}^7t_j=\frac{8-7p}{3} \,\ \text{or} \,\ \frac{7-7p}{3}&, \,\ \text{if $c_1+2t_i\geq0$ for all $i$}
\end{align*}
or
\begin{align*}
\sum_{j=1}^7 [(c_1+2t_j)^2+(c_1+2t_j)]=(-3c_1+2p+1)(-3c_1+2p+2), \,\ \text{if $c_1+2t_i<0$ for some $i$,}
\end{align*}
where $c_1=\sum_{i=1}^7 t_i+3p-2$.
    
\end{prop}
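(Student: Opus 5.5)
The plan is to reduce stability to a vanishing statement for $h^0$ of a normalized twist, and then read that vanishing off the formulas in Remark \ref{prop2}. The first tool is the standard criterion for rank-two bundles on $\mathbb{P}^2$: if $E_{\mathrm{norm}}=E(k)$ is the unique twist with $c_1(E_{\mathrm{norm}})\in\{0,-1\}$, then $E$ is stable if and only if $h^0(\mathbb{P}^2,E_{\mathrm{norm}})=0$. By Remark \ref{rem4}, twisting does not affect stability, so this applies directly to $E^\rho_{p,t_i}$.

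\textbf{Step 1: compute the twist on $\tilde{\mathbb{P}}^2_7$.} By the projection formula,
$$E^\rho_{p,t_i}(k)=g_{\rho,*}\bigl(L^{p,t_i}\otimes g_\rho^*\mathcal{O}(k)\bigr)=g_{\rho,*}L^{p+k,t_i}=E^\rho_{p+k,t_i},$$
since $g_\rho^*H=M$. By Remark \ref{prop2} ii), twisting by $\mathcal{O}(k)$ shifts $c_1$ by $2k$.

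\textbf{Step 2: choose the normalizing twist.} Choose $k$ so that $c_1+2k\in\{0,-1\}$, i.e. $k=-\lceil c_1/2\rceil$, according to the parity of $c_1=\sum_j t_j+3p-2$. The question then becomes when $h^0(E^\rho_{p',t_i})=0$ for $p'=p+k$. Here I would also use property i) of Remark \ref{prop2}, the relabelling $E_{p,t_i}=E_{q,s_i}$ (the $W(E_7)$-type reflection), to move to a representative with $p'\ge -1$. The ambiguity in this relabelling is presumably where the expressions $c_1+2t_i$ in the statement come from: $s_i$ is linear in $c_1$ and $t_i$.

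\textbf{Step 3: split into the two cases of Remark \ref{prop2} iv).}
\begin{itemize}
\item If all the relevant coefficients are nonnegative (the case $c_1+2t_i\ge0$), then $h^0=\tfrac12(p'+1)(p'+2)$. This vanishes exactly when $p'\in\{-1,-2\}$. Substituting $p'$ back in terms of $p$ and $\sum t_j$ should give the two linear conditions $\sum t_j=\tfrac{8-7p}{3}$ and $\sum t_j=\tfrac{7-7p}{3}$, one for each parity of $c_1$.
\item If some coefficient is negative, then $h^0=d(p',t_i)=\tfrac12(p'+1)(p'+2)-\tfrac12\sum_j t_j'(t_j'+1)$. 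Setting this to zero and clearing denominators gives the quadratic identity in the statement, where the new coefficients are $t_j'=c_1+2t_j$ and $p'$ is rewritten via $-3c_1+2p$.
\end{itemize}
Both directions of the ``if and only if'' follow, because the stability criterion is itself an equivalence.

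\textbf{Main obstacle.} The hard part is the bookkeeping in Steps 2 and 3: identifying exactly which relabelled pair $(p',t_i')$ the normalized twist corresponds to, so that $p'\ge -1$ and Remark \ref{prop2} iv) applies. The values $p'=-2$ and $p'<-1$ need separate care via Serre duality, as indicated in Remark \ref{prop2} v). I would first check the relabelling formula against the expressions $c_1+2t_i$ and $-3c_1+2p$ appearing in the statement. I would then test the resulting conditions against Proposition \ref{Prop9}, where $\sum t_i=4-p$ must give stable bundles, to confirm the conventions.
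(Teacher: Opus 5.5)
Your stability criterion is valid: $E$ is stable iff $h^0(E_{\mathrm{norm}})=0$. The argument breaks in Step 1, and even once that is repaired, this route cannot produce the displayed conditions.

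\textbf{Step 1 uses the wrong twist.} The sentence ``$M:=g^*H$'' in Section~\ref{Pre} cannot be read literally. Remark~\ref{prop2} ii), the identification $K_{\tilde{\mathbb{P}}^2_7}=L^{-3,1}$ and the paper's proof all use $g_\rho^*\mathcal{O}(1)=K_{\tilde{\mathbb{P}}^2_7}^{-1}=L^{3,-1}$, where $M$ is the pull-back of a line under the blow-down. Your own check exposes the problem: Remark~\ref{prop2} ii) gives $c_1(E^\rho_{p+k,t_i})=c_1+3k$, not $c_1+2k$. The correct twist is $E^\rho_{p,t_i}(k)=E^\rho_{p+3k,\,t_i-k}$. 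With your formula, $p'=-1$ gives $\sum_j t_j=4-p$ for $c_1$ even, not $\frac{8-7p}{3}$.

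\textbf{Even corrected, the formulas come out different.} With $k=-\lceil c_1/2\rceil$ the relevant line bundle is $L^{p+3k,\,t_i-k}$. Running Remark~\ref{prop2} iv) on it gives:
\begin{itemize}
\item for $c_1$ even, $\sum_j t_j\in\{\frac{8-7p}{3},\frac{10-7p}{3}\}$;
\item for $c_1$ odd, $\sum_j t_j\in\{\frac{5-7p}{3},\frac{7-7p}{3}\}$, with the case split $c_1+2t_i\ge -1$;
\item for $c_1$ even, the identity $(2p-3c_1+2)(2p-3c_1+4)=\sum_j(c_1+2t_j)(c_1+2t_j+2)$ instead of the stated one.
\end{itemize}
The relabelling of Remark~\ref{prop2} i) does not supply $c_1+2t_i$ either: the covering involution sends $t_i\mapsto -t_i-c_1-2$.

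\textbf{The missing idea.} The quantities $-3c_1+2p$ and $c_1+2t_i$ are the coefficients of $\mathrm{Hom}(K,L^{p,t_i})$, where $K$ is the kernel of $g^*E^\rho_{p,t_i}\to L^{p,t_i}$. This line bundle controls simplicity, not $h^0(E_{\mathrm{norm}})$. The paper uses stable $\Leftrightarrow$ simple. The condition $\det g^*E^\rho_{p,t_i}=g^*\mathcal{O}(c_1)=L^{3c_1,-c_1}$ forces $K=L^{3c_1-p,\,-c_1-t_i}$. Dualizing and tensoring with $L^{p,t_i}$ gives
\[
0\to\mathcal{O}\to L^{p,t_i}\otimes g^*(E^\rho_{p,t_i})^*\to L^{-3c_1+2p,\,c_1+2t_i}\to 0 .
\]
Since $H^1(\mathcal{O}_{\tilde{\mathbb{P}}^2_7})=0$, this yields $h^0(\mathrm{End}\,E^\rho_{p,t_i})=1+h^0(L^{-3c_1+2p,\,c_1+2t_i})$. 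Applying Remark~\ref{prop2} iv) to this line bundle gives exactly the two displayed conditions. For $c_1$ even, this bundle is the square of your corrected normalized one. That is why the two criteria agree in substance, while Remark~\ref{prop2} iv) returns different numbers for them.

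\textbf{Your concern about $p'<-1$.} This worry is justified, and it affects the paper's proof as well. Since $M$ is nef and $(p'M+\sum_i t_i'N_i)\cdot M=p'$, we have $h^0(L^{p',t_i'})=0$ for every $p'<0$. So in the case $c_1+2t_i\ge 0$ the honest condition is $-3c_1+2p\le -1$, not $-3c_1+2p\in\{-1,-2\}$. For example, $p=1$, $t=(1,0,\dots,0)$ gives $c_1=2$ and $-3c_1+2p=-4$. This bundle is simple, hence stable, yet $\sum_j t_j=1$. The displayed equalities come out only when Remark~\ref{prop2} iv) is applied outside its range.
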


\begin{proof}
    From the surjective map $g^*E_{p,t_i}^\rho\to L^{p,t_i}$, one obtains the following short exact sequence 

    $$0\to K\to g^*E_{p,t_i}^\rho\to L^{p,t_i}\to0$$ where $K$ is the kernel bundle, which we may write as $K=L^{q,s_i}$ for some $q,s_1,s_2,\cdots,s_7\in\Z$. From $\det(g^*E_{p,t_i}^\rho)=L^{p,t_i}\otimes L^{q,s_i}=L^{p+q,t_i+s_i}$ and 

    $$\det(g^*E_{p,t_i}^\rho)=g^*\det(E_{p,t_i}^\rho)=g^*\mathcal{O}((t+3p-2)\cdot H)=K_{\tilde{\mathbb{P}}^2_7}^{-c_1}$$ we get that $L^{p+q,s_i+t_i}=K_{\tilde{\mathbb{P}}^2_7}^{-c_1}$ where $K_{\tilde{\mathbb{P}}^2_7}$ is the canonical bundle of $\tilde{\mathbb{P}}^2_7$. The canonical bundle $K_{\tilde{\mathbb{P}}^2_7}$ may be associated to the divisor $-3M+N_1+\cdots+N_7$, and we may write $K_{\tilde{\mathbb{P}}^2_7}=L^{-3,1}$ and so $K_{\tilde{\mathbb{P}}^2_7}^{-c_1}=L^{3c_1,-c_1}$. Thus,

    $$L^{3c_1,-c_1}=L^{p+q,t_i+s_i}$$ which implies $3c_1=p+q$ and $-c_1=t_i+s_i$ and so we have the short exact sequence

    $$0\to L^{3c_1-p,-c_1-t_i}\to g^*E_{p,t_i}^\rho\to L^{p,t_i}\to0.$$ Taking the dual sequence and then tensoring by $L^{p,t_i}\otimes K_{\tilde{\mathbb{P}}^2_7}^{-d}$ yields 

    $$0\to L^{3d,-d}\to L^{p,t_i}\otimes g^*((E_{p,t_i}^\rho)^*\otimes\mathcal{O}(d))\to L^{-3c_1+2p+3d,c_1+2t_i-d}\to0.$$ Taking the long exact sequence in cohomology gives

    \begin{align*}
        0\to H^0(\tilde{\mathbb{P}}^2_7,L^{3d,-d})\to H^0(\tilde{\mathbb{P}}^2_7,L^{p,t_i}\otimes g^*((E_{p,t_i}^\rho)^*(d))) &\to H^0(\tilde{\mathbb{P}}^2_7,L^{-3c_1+2p+3d,c_1+2t_i-d}) \\
        &\to H^1(\tilde{\mathbb{P}}^2_7,L^{3d,-d})\to\cdots
    \end{align*}    
        
        where using the properties from \ref{prop2} iv) and v), $h^1(\tilde{\mathbb{P}}^2_7,L^{3d,-d})=7d$ and $h^0(\tilde{\mathbb{P}}^2_7,L^{3d,-d})=d^2+8d+1$. So for $d=0$: 

    $$h^0(\mathbb{P}^2,\mathrm{End}E_{p,t_i}^\rho)=h^0(\tilde{\mathbb{P}}^2_7,L^{p,t_i}\otimes g^*(E_{p,t_i}^\rho)^*)=1+h^0(\tilde{\mathbb{P}}^2_7,L^{-3c_1+2p,c_1+2t_i})$$ and so the result follows from \ref{prop2} iv) and the fact that $$h^0(\tilde{\mathbb{P}}^2_7,L^{p,t_i}\otimes g^*(E_{p,t_i}^\rho)^*\otimes\mathcal{O}(d)))=h^0(\mathbb{P}^2,\mathrm{End}E_{p,t_i}^\rho(d)).$$ 
\end{proof}

\subsection{Relationship Between \texorpdfstring{$\ell=1$}{l=1} and \texorpdfstring{$\ell=2$}{l=2}}

Let $E\to\mathbb{P}^2$ be a rank-two holomorphic vector bundle arising both as $E=f_*\mathcal{O}(r,s)$ for a double cover $f:\mathbb{P}^1\times\mathbb{P}^1\to\mathbb{P}^2$ when $\ell=1$, and as $E=g_*L^{p,t_i}$ for a double cover $g:\tilde{\mathbb{P}}^2_7\to\mathbb{P}^2$ when $\ell=2$, as described in Sections \ref{3.2} and \ref{3.3}, respectively. As noted at the end of Remark \ref{prop2}, such bundles exist. Since both coverings are smooth, the spectral correspondence yields stable pairs in each case. When $\ell=1$, the correspondence produces a Higgs field $\Phi\in H^0(\mathbb{P}^2,\mathrm{End}_0E(1))$, and for $\ell=2$, it produces a Higgs field $\tilde{\Phi}\in H^0(\mathbb{P}^2,\mathrm{End}_0E(2))$. However, given any $h \in H^0(\mathbb{P}^2,\mathcal{O}(1))$, we can construct the Higgs field $\Phi\otimes h \in H^0(\mathbb{P}^2,\mathrm{End}_0E(2))$, and the pair $(E,\Phi\otimes h)$ defines a stable wild Vafa-Witten bundle with $d=2$ whose Higgs field is a multiple of a Higgs field with $d=1$. Therefore, when the underlying bundle $E$ is a Schwarzenberger bundle for both $\ell=1$ and $\ell=2$, Higgs fields with $d=2$ can be obtained via the spectral correspondence for $\ell=1$ after tensoring by an $h\in H^0(\mathbb{P}^2,\mathcal{O}(1))$, or via the spectral correspondence for $\ell=2$. In both cases, the wild Vafa-Witten pair is stable. However, in the first case, if $\Phi\in H^0(\mathbb{P}^2,\mathrm{End}_0E(1))$ and $h\in H^0(\mathbb{P}^2,\mathcal{O}(1))$, the spectral cover of $\Phi\otimes h$ is singular since its branching curve is the zero locus of $h^2\det\Phi$ and therefore reducible. Whereas, the Higgs fields coming from the $\ell=2$ construction have smooth spectral covers.

\subsection{Wild Vafa-Witten Bundles With $\ell\geq3$ or Singular Spectral Covers With $\ell\geq1$}
    When $\ell\geq3$, consider a smooth double cover $f:X\to\mathbb{P}^2$ branched along a smooth curve $\rho\in H^0(\mathbb{P}^2,\mathcal{O}(2\ell))$. Suppose $(E,\Phi)$ is a rank-two Vafa-Witten bundle with $\Phi\in H^0(\mathbb{P}^2,\mathrm{End}_0E(\ell))$ and $\det\Phi=\rho$. Then the spectral correspondence states that $E$ is obtained as the push-forward of some line bundle $L$ on $X$ with $\Phi$ induced from the multiplication of the tautological section of $\mathrm{Tot}(\mathcal{O}(\ell))$ on $L$. From Schwarzenberger \cite{MR0137712}, when $\ell=3$, $X$ is a non-singular K3 surface and when $\ell\geq4$, $X$ is a surface of general type.

Finally, consider a double cover $f:X\to\mathbb{P}^2$ branched along a singular curve $\rho\in H^0(\mathbb{P}^2,\mathcal{O}(2\ell))$ with $\ell\geq1$. In this case, $X$ is singular. Can one still construct a stable rank-two wild Vafa-Witten pair $(E,\Phi)$ with $\Phi\in H^0(\mathbb{P}^2,\mathrm{End}_0E(\ell))$ such that $\det\Phi=\rho$ using a spectral type correspondence? This can be done by adapting the techniques used to study Higgs bundles over Riemann surfaces with singular spectral curves (see \cite{MR4201792,MR3031827,MR3787793} and the references therein). This will be studied in future work.

\section{Moduli Space of Stable wild Vafa-Witten Bundles}\label{Sec4}

Let $c_1,c_2\in\mathbb{Z}$ and $d\in\mathbb{Z}^{\geq0}$. We denote $\mathcal{M}^{\mathrm{VW}}_{c_1,c_2,d}$ the moduli space of stable wild Vafa-Witten pairs $(E,\Phi)$ on $\mathbb{P}^2$ with $c_1(E)=c_1\cdot H,c_2(E)=c_2\cdot H^2$ and $\Phi\in H^0(\mathbb{P}^2,\mathrm{End}_0E(d))$. In this section, we consider moduli spaces with $c_1=r+s-1$ and $c_2=\frac{1}{2}(r(r-1)+s(s-1))$. Moreover, by the spectral correspondence, any stable pair $(E,\Phi)\in \mathcal{M}^{\mathrm{VW}}_{c_1,c_2,1}$ with smooth spectral cover and Chern classes  

$$c_1(E)=(r+s-1)\cdot H, \,\ c_2(E)=\left(\frac{1}{2}(r(r-1)+s(s-1))\right)\cdot H^2$$
for some integers $r,s\in\mathbb{Z}$ with $s\geq r$, is such that $E=E_{r,s}^\rho$ for some $\rho\in H^0(\mathbb{P}^2,\mathcal{O}(2))$. Given that the cohomology of $E_{r,s}^\rho$ is well understood, we get the following result by studying the deformation theory of such pairs: 

\begin{thm}\label{Thm16}
For $r,s\in\mathbb{Z}$ with $s\geq r$, let $c_1=r+s-1$ and $c_2=\frac{1}{2}(r(r-1)+s(s-1))$. The moduli space $\mathcal{M}_{c_1,c_2,1}^{\mathrm{VW}}$ is smooth with complex dimension 

$$\dim_\C \mathcal{M}_{c_1,c_2,1}^{\mathrm{VW}}=6.$$
\end{thm}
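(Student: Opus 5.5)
The plan is to use the deformation theory of wild Vafa-Witten pairs via the two-term complex
$$C^\bullet(E,\Phi):\quad \mathrm{End}_0E\xrightarrow{\;[\,\cdot\,,\Phi]\;}\mathrm{End}_0E(1).$$
Its hypercohomology $\mathbb{H}^1$ is the Zariski tangent space to $\mathcal{M}^{\mathrm{VW}}_{c_1,c_2,1}$ at $(E,\Phi)$, and $\mathbb{H}^2$ contains the obstructions. Stability of $(E,\Phi)$ forces $\mathbb{H}^0=0$, since an infinitesimal automorphism commuting with $\Phi$ must be a scalar, and scalars are removed by trace-freeness. The hypercohomology long exact sequence then reads
$$0\to H^0(\mathrm{End}_0E)\to H^0(\mathrm{End}_0E(1))\to\mathbb{H}^1\to H^1(\mathrm{End}_0E)\to H^1(\mathrm{End}_0E(1))\to\mathbb{H}^2\to H^2(\mathrm{End}_0E)\to H^2(\mathrm{End}_0E(1))\to\mathbb{H}^3\to 0.$$
I will show that $\mathbb{H}^2=0$ at every stable point, which gives smoothness. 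Then $\dim\mathbb{H}^1=-\chi(C^\bullet)=\chi(\mathrm{End}_0E)\cdot(-1)+\chi(\mathrm{End}_0E(1))$ up to the vanishing terms. This Euler characteristic depends only on Chern classes and can be computed with the Hirzebruch--Riemann--Roch formula from the proof of Proposition \ref{3.4}.

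\textbf{Step 1 (the dimension).} From the proof of Proposition \ref{3.4},
$$\chi(\mathrm{End}_0E(d))=\tfrac32 d(d+3)-(s-r)^2+4.$$
Hence $\chi(\mathrm{End}_0E(1))-\chi(\mathrm{End}_0E)=6-0=6$. With $\mathbb{H}^0=\mathbb{H}^2=0$, the exact sequence gives $\dim\mathbb{H}^1=6$ if also $\mathbb{H}^3=0$. For $\mathbb{H}^3$, Serre duality identifies $\mathbb{H}^3$ with the dual of $\mathbb{H}^0$ of the dual complex $\mathrm{End}_0E(-4)\to\mathrm{End}_0E(-3)$. This vanishes because $H^0(\mathrm{End}_0E(-3))=0$ for every stable $E$, and indeed for all $E$ with these Chern classes by the $\tilde d<0$ argument in Proposition \ref{Prop15}.

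\textbf{Step 2 (vanishing of $\mathbb{H}^2$), the main obstacle.} By Serre duality, $\mathbb{H}^2(C^\bullet)\cong\mathbb{H}^1(C^{\bullet\vee}\otimes K)^*$, where the dual complex is $\mathrm{End}_0E(-4)\to\mathrm{End}_0E(-3)$. I would first try to control the terms directly. The map $H^1(\mathrm{End}_0E)\to H^1(\mathrm{End}_0E(1))$ must be shown surjective, and $H^2(\mathrm{End}_0E)\to H^2(\mathrm{End}_0E(1))$ injective.

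The $H^2$ terms vanish for Schwarzenberger bundles by Proposition \ref{Prop15}. For a general $E$ with these Chern classes, they vanish because $H^2(\mathrm{End}_0E(d))^*=H^0(\mathrm{End}_0E(-d-3))$. A nonzero section of this would give $\Phi$-independent maps $E\to E(-d-3)$, which are impossible for $E$ arising in a stable pair with $|m_1-m_2|\le 1$ (Proposition \ref{2.1}), or for semistable-type $E$.

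For $H^1(\mathrm{End}_0E(1))$, I would split into two classes of points. Points with smooth spectral conic have $E=E^\rho_{r,s}$. For these, Propositions \ref{3.4} and \ref{3.5} give $h^1(\mathrm{End}_0E(1))=0$ when $s\le r+2$, and $k^2-9$ when $s=r+k$. The latter case needs the surjectivity of $[\cdot,\Phi]$ on $H^1$. I expect this is precisely where the real work lies, and I would try to prove it on $\mathbb{P}^1\times\mathbb{P}^1$ by pulling back and using the sequence $0\to\mathcal{O}(s-1,r-1)\to f^*E\to\mathcal{O}(r,s)\to0$.

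Points with singular spectral conic must be handled separately. If the conic is degenerate, I would argue that stability forces the split or $T\mathbb{P}^2$-type bundles, so $s\le r+2$. For these, every $h^1$ term vanishes and the $H^0$ computation of Proposition \ref{Prop11} gives the same count of $6$ directly.

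If the general surjectivity argument fails, the fallback is to show that the Euler characteristic argument still gives the expected dimension $6$ on each component. One would then check smoothness through the explicit parametrization: a $5$-dimensional family of conics $\rho$ up to scaling, plus the $\mathbb{C}^*$ scaling of $\Phi$, with the line bundle fixed by $(r,s)$. This recovers $6$ and provides an independent dimension check.
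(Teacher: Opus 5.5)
The gap is in Step 2, in how you dispose of pairs whose spectral conic is degenerate. Your reduction itself is fine: $\mathbb{H}^0=\mathbb{H}^3=0$ at a stable pair, so $\dim\mathbb{H}^1=6+\dim\mathbb{H}^2$ everywhere. However, the claim that a degenerate $\det\Phi$ forces $s\le r+2$ is false.

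Take $\Phi=0$, so $\det\Phi\equiv0$. Since $E^\rho_{r,r+k}$ is stable for $k\ge2$, the pair $(E^\rho_{r,r+k},0)$ is stable. There the differential of your complex is zero, so $\mathbb{H}^2=H^1(\mathrm{End}_0E(1))$, which has dimension $k^2-9$ by Proposition~\ref{3.5}. Correspondingly $\dim\mathbb{H}^1=h^0(\mathrm{End}_0E(1))+h^1(\mathrm{End}_0E)=1+(k^2-4)$. For $k=3$ this is still $6$, but for $k\ge4$ it is not.

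In fact, the whole moduli space of stable bundles with these Chern classes sits inside $\mathcal{M}^{\mathrm{VW}}_{c_1,c_2,1}$ as the locus $\Phi=0$. Its dimension is $4c_2-c_1^2-3=(s-r)^2-4\ge 12$. It also meets the closure of the Schwarzenberger locus, since $(E^\rho_{r,r+k},t\Phi)\to(E^\rho_{r,r+k},0)$ as $t\to0$. So ``$\mathbb{H}^2=0$ at every stable point'' cannot be proved, and the statement read literally for the whole moduli space fails when $s-r\ge4$.

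Your fallback count (five parameters for the conic plus the $\mathbb{C}^*$-scaling) only describes the open locus of pairs with smooth spectral conic, not these points. What is provable, and what the paper's argument actually covers, is smoothness and dimension $6$ at the pairs $(E^\rho_{r,s},\Phi)$. This holds for $s\le r+2$ with any $\Phi$ giving a stable pair, since all the relevant $H^1$ and $H^2$ vanish. It also holds for $s\ge r+3$ with $\Phi$ coming from a smooth spectral conic. You need to restrict to that locus.

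Even on that locus, Step 2 leaves the essential point unproved. For $s=r+k$ with $k\ge4$ you need $[\,\cdot\,,\Phi]\colon H^1(\mathrm{End}_0E)\to H^1(\mathrm{End}_0E(1))$ to be surjective, and you only announce an attempt on $\mathbb{P}^1\times\mathbb{P}^1$.

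The paper's key input is Lemma~\ref{lem20}. A smooth conic $\det\Phi$ forces $\Phi$ to vanish nowhere, because a zero of $\Phi$ is a singular point of $\det\Phi$. Hence $[\,\cdot\,,\Phi]$ has constant rank two, and there are exact sequences $0\to\mathcal{O}(-1)\xrightarrow{\Phi}\mathrm{End}_0E\xrightarrow{[\cdot,\Phi]}Q\to0$ and $0\to Q\to\mathrm{End}_0E(1)\xrightarrow{\mathrm{tr}(\Phi\,\cdot)}\mathcal{O}(2)\to0$. Since $H^1$ and $H^2$ of $\mathcal{O}(-1)$ and $H^1(\mathcal{O}(2))$ all vanish, this gives $H^1(\mathrm{End}_0E)\cong H^1(Q)\twoheadrightarrow H^1(\mathrm{End}_0E(1))$.

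With that supplied, your Euler-characteristic route is a genuinely cleaner alternative to the paper's case-by-case count. The paper computes $H^0(\mathrm{End}_0E(1))$ modulo the image of $H^0(\mathrm{End}_0E)$, plus the kernel on $H^1$; your route gives $6$ uniformly without computing any $h^0$.

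One smaller repair concerns the vanishing $H^0(\mathrm{End}_0E(-m))=0$ for $m\ge2$, which you need for $H^2$ and $\mathbb{H}^3$. It should be argued directly rather than via Proposition~\ref{2.1}, which only concerns split bundles. A nonzero such section is nilpotent, and its kernel is a $\Phi$-invariant destabilizing subsheaf, contradicting stability of the pair.
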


For $d>0$, not every stable pair $(E,\Phi)$ is such that $E=E_{r,s}^\rho$, and little is known about the cohomology of $E$ when $E\neq E_{r,s}^\rho$. We can nonetheless compute the dimension of the connected components of $\mathcal{M}_{c_1,c_2,d}^{\mathrm{VW}}$ containing pairs $(E_{r,s}^\rho,\Phi)$.

\begin{thm}\label{4.1}
    Let $d>0$ and $r,s\in\mathbb{Z}$ with $s\geq r$. Let $c_1=r+s-1$ and $c_2=\frac{1}{2}(r(r-1)+s(s-1))$. If $\mathcal{C}$ is a connected component of $\mathcal{M}_{c_1,c_2,d}^{\mathrm{VW}}$ containing pairs of the form $(E_{r,s}^\rho,\Phi)$, then

    $$\dim_\C \mathcal{C}=\frac{3}{2}d(d+3).$$
\end{thm}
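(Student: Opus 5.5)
The plan is to compute the dimension of $\mathcal{C}$ through the deformation theory of pairs. At a stable pair $(E,\Phi)$ with trace-free Higgs field, infinitesimal deformations and obstructions are governed by the two-term complex
$$C^\bullet:\ \mathrm{End}_0E\xrightarrow{\ [\,\cdot\,,\Phi]\ }\mathrm{End}_0E(d),$$
placed in degrees $0$ and $1$. The hypercohomology $\mathbb{H}^1(C^\bullet)$ is the Zariski tangent space, $\mathbb{H}^2(C^\bullet)$ contains the obstructions, and $\mathbb{H}^0(C^\bullet)$ is the space of infinitesimal automorphisms. I would show, at every point $(E_{r,s}^\rho,\Phi)\in\mathcal{C}$, that $\mathbb{H}^0=\mathbb{H}^2=0$, and then compute $\dim\mathbb{H}^1=-\chi(C^\bullet)$ by Hirzebruch--Riemann--Roch.

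First, $\mathbb{H}^0(C^\bullet)$ consists of trace-free endomorphisms commuting with $\Phi$. For a stable pair such an endomorphism is a scalar: an eigenvalue argument applies, since a non-scalar one would have a $\Phi$-invariant kernel or image destabilizing $E$. Being trace-free, it therefore vanishes, so $\mathbb{H}^0=0$.

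Second, for $\mathbb{H}^2$ I would use Serre duality for hypercohomology. This gives $\mathbb{H}^2(C^\bullet)^*\cong\mathbb{H}^0\big((C^\bullet)^\vee\otimes K_{\mathbb{P}^2}\big)$, where the dual complex is $\mathrm{End}_0E(-d-3)\to\mathrm{End}_0E(-3)$, using $\mathrm{End}_0E^*\cong\mathrm{End}_0E$ via the trace pairing. Its $\mathbb{H}^0$ injects into $H^0(\mathrm{End}_0E_{r,s}^\rho(-d-3))$, which vanishes by the computation in the proof of Proposition \ref{Prop15}. Hence $\mathbb{H}^2=0$.

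This route avoids the long exact sequence containing $H^1(\mathrm{End}_0E(d))\to\mathbb{H}^2$. That sequence would fail at $d=1$ with $s-r>3$, since Proposition \ref{3.5} gives $h^1(\mathrm{End}_0E(1))=k^2-9\neq 0$ there.

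With $\mathbb{H}^0=\mathbb{H}^2=0$, we get $\dim\mathbb{H}^1=\chi(\mathrm{End}_0E(d))-\chi(\mathrm{End}_0E)$. The HRR formula in the proof of Proposition \ref{3.4} gives
$$\chi(\mathrm{End}_0E_{r,s}^\rho(d))=\tfrac32 d(d+3)-(s-r)^2+4,$$
and the $d$-independent terms cancel in the difference, leaving $\tfrac32 d(d+3)$. Since the obstruction space vanishes, the moduli space is smooth at $(E_{r,s}^\rho,\Phi)$ of this dimension.

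The main obstacle is passing from these points to the whole component $\mathcal{C}$, because at pairs whose bundle is not a Schwarzenberger bundle we do not control $\mathbb{H}^2$. I would argue as follows. The quantity $\dim\mathbb{H}^1-\dim\mathbb{H}^2=-\chi(C^\bullet)$ depends only on $c_1,c_2,d$, so it is constant on the component. By standard obstruction theory, every irreducible component through any point has dimension at least $-\chi=\tfrac32 d(d+3)$. At the Schwarzenberger points the space is smooth of exactly that dimension, so the component through them has dimension $\tfrac32 d(d+3)$. Stating this as the dimension of $\mathcal{C}$ also needs that no other irreducible component of $\mathcal{C}$ has larger dimension. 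I would handle this by appealing to the convention that $\dim\mathcal{C}$ is the dimension at the pairs $(E_{r,s}^\rho,\Phi)$, or else by showing that the locus of such pairs is open and dense in $\mathcal{C}$. I expect this last step to need the most care.
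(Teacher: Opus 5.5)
There is a genuine gap in your $\mathbb{H}^2$ step: it rests on a degree-shift error in Serre duality. The isomorphism $\mathbb{H}^2(C^\bullet)^*\cong\mathbb{H}^0\bigl((C^\bullet)^\vee\otimes K_{\mathbb{P}^2}\bigr)$ holds only with the dual complex placed in degrees $-1,0$, i.e.\ with $\mathrm{End}_0E(-d-3)$ in degree $-1$. Its $\mathbb{H}^0$ is then built from $H^0(\mathrm{End}_0E(-3))$ and $\ker\bigl(H^1(\mathrm{End}_0E(-d-3))\to H^1(\mathrm{End}_0E(-3))\bigr)$, not from $H^0(\mathrm{End}_0E(-d-3))$. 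The group you showed to vanish, $\ker\bigl(H^0(\mathrm{End}_0E(-d-3))\to H^0(\mathrm{End}_0E(-3))\bigr)$, is dual to $\mathbb{H}^3(C^\bullet)$. That fact is still useful, since $\dim\mathbb{H}^1-\dim\mathbb{H}^2=-\chi(C^\bullet)$ also needs $\mathbb{H}^3=0$. Doing the bookkeeping correctly, or arguing directly from the triangle $\mathrm{End}_0E(d)[-1]\to C^\bullet\to\mathrm{End}_0E$ together with $H^2(\mathrm{End}_0E)=0$, gives
\[
\mathbb{H}^2(C^\bullet)\cong\mathrm{coker}\bigl(H^1(\mathrm{End}_0E)\xrightarrow{[\,\cdot\,,\Phi]}H^1(\mathrm{End}_0E(d))\bigr).
\]
This is exactly the group $E_2^{1,1}$ the paper has to kill, so the dual route does not bypass the difficulty you identified. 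The case $\Phi=0$ shows that some input about $\Phi$ is unavoidable. For $s=r+k$ with $k\ge4$ the bundle $E^\rho_{r,s}$ is stable, so $(E^\rho_{r,s},0)$ is a stable pair of the stated form. For $d=1$ its obstruction space is all of $H^1(\mathrm{End}_0E^\rho_{r,s}(1))$, of dimension $k^2-9\neq0$, even though $H^0(\mathrm{End}_0E^\rho_{r,s}(-4))=0$.

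What you actually need is surjectivity of $[\,\cdot\,,\Phi]$ on $H^1$. This is automatic when $H^1(\mathrm{End}_0E^\rho_{r,s}(d))=0$. Combining Propositions \ref{Prop11} and \ref{Prop15} with your HRR formula gives $h^1(\mathrm{End}_0E^\rho_{r,r+k}(d))=\max\bigl(0,k^2-(d+2)^2\bigr)$, so the automatic case is exactly $k\le d+2$; in particular, part iii) of Proposition \ref{3.5} cannot be invoked when $k>d+2$. Outside that range you must use $\Phi$. The paper does this for $d=1$, $k>3$, using that $\Phi$ has a smooth spectral conic. Lemma \ref{lem20} then gives $0\to\mathcal{O}(-1)\to\mathrm{End}_0E\to Q\to0$ and $0\to Q\to\mathrm{End}_0E(1)\to\mathcal{O}(2)\to0$, and $H^2(\mathcal{O}(-1))=H^1(\mathcal{O}(2))=0$ force surjectivity. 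Once $\mathbb{H}^2=0$ is secured, your Euler-characteristic count of $\dim\mathbb{H}^1$ is a cleaner substitute for the paper's case-by-case computation and avoids its appeal to $d_2=0$.

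The points with $\Phi=0$ also block the density argument you propose for passing to all of $\mathcal{C}$. The pair $(E^\rho_{r,s},0)$ is the limit of $(E^\rho_{r,s},c\Phi)$ as $c\to0$, so for $k\ge2$ the component $\mathcal{C}$ contains the pairs $(E',0)$ with $E'$ stable near $E^\rho_{r,s}$. That locus has dimension $h^1(\mathrm{End}_0E^\rho_{r,s})=k^2-4$, which exceeds $\frac32 d(d+3)$ once $k$ is large (already for $d=1$, $k\ge4$). The paper's proof is likewise only pointwise at the pairs $(E^\rho_{r,s},\Phi)$ and does not address this step.
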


\begin{rem}
    We are only considering the case when $d >0$ in Theorem \ref{4.1} as when $d=0$, the only traceless endomorphisms $\Phi$ for which the pair $(E,\Phi)$ is stable is $\Phi=0$ (see \cite[Lemma 3.2]{MR4204283}).
\end{rem}

\subsection{Hypercohomology Computations}

To determine the smooth loci of the moduli space of stable wild Vafa-Witten pairs whose underlying bundle is a Schwarzenberger bundle of type $\ell=1$, we consider the deformation theory of such pairs. By analyzing the associated spectral sequence arising from the deformation complex of a stable wild Vafa-Witten pair with underlying bundle a Schwarzenberger bundle $E_{r,s}^\rho$ of type $\ell=1$, we identify the first hypercohomology group of this complex with the Zariski tangent space to the moduli space $\mathcal{M}_{r,s,d}^{\mathrm{VW}}$ at the corresponding point. In particular, the dimension of the first hypercohomoloy coincides with the dimension of the Zariski tangent space at a stable pair $(E_{r,s}^\rho,\Phi)$. Let $d>0$, and fix a Higgs field $\Phi\in H^0(\mathrm{End}_0E_{r,s}^\rho(d))$. Consider the complex: 

$$0\to\mathrm{End}_0E_{r,s}^\rho\overset{-\land\Phi}{\to}\mathrm{End}_0E_{r,s}^\rho\wedge\mathcal{O}(d)\overset{-\land\Phi}{\to}\mathrm{End}_0E_{r,s}^\rho\wedge^2\mathcal{O}(d)\to\cdots.$$ Since $\mathcal{O}(d)$ is a line bundle, the above complex reduces to 

\begin{align}\label{sequence4.1}
    0\to \mathrm{End}_0E_{r,s}^\rho \overset{-\land\Phi}{\to}\mathrm{End}_0E_{r,s}^\rho(d)\to0.
\end{align}

In particular, the condition $\Phi\land\Phi=0$ is automatically satisfied, so the operation \\ $-\land\Phi$ defines a differential on the complex of \v{C}ech cochains associated to $\mathrm{End}_0E_{r,s}^\rho\otimes\wedge^\bullet\mathcal{O}(d)$. Denoting this complex of cochains by $C^\bullet(\mathrm{End}_0E_{r,s}^\rho\wedge^\bullet\mathcal{O}(d))$, we obtain the standard \v{C}ech resolution

$$0\to C^0(\mathrm{End}_0E_{r,s}^\rho\wedge^\bullet\mathcal{O}(d))\overset{\delta}{\to}C^1(\mathrm{End}_0E_{r,s}^\rho\wedge^\bullet\mathcal{O}(d))\overset{\delta}{\to} C^2(\mathrm{End}_0E_{r,s}^\rho\wedge^\bullet\mathcal{O}(d))\to\cdots ,$$

where $\delta$ denotes the ordinary \v{C}ech coboundry operator. We regard this as the zeroth sheet of the spectral sequence computing the hypercohomology of \ref{sequence4.1}, and denote by 

$$E^{p,q}_1=H^p_\delta(\mathrm{End}_0E_{r,s}^\rho\wedge^q\mathcal{O}(d))$$ 

the first-sheet terms, where $H_\delta$ indicates \v{C}ech cohomology with respect to $\delta$. Since $\wedge^q\mathcal{O}(d)=0$ for $q\geq2$, we immediately obtain $E^{p,q}_1=0$ for all such $q$. Moreover, as we are working on a complex surface, $H^p_\delta(\mathrm{End}_0E_{r,s}^\rho\wedge^q\mathcal{O}(d))=0$ for $p>2$; hence $E^{p,q}_1=0$ for $p>2$. Consequently, the only possible nontrivial terms on the $E_1$-page occur for $0\leq p\leq 2$ and $0\leq q \leq 1$. The second sheet of the associated spectral sequence is obtained by taking cohomology with respect to the differential $-\wedge\Phi$. Explicitly,

$$E^{p,q}_2=H^p_{-\land\Phi}(H^q_\delta(\wedge^\bullet\mathcal{O}(d)))=\frac{\ker(H^q_\delta(\mathrm{End}_0E_{r,s}^\rho\wedge^p\mathcal{O}(d))\overset{-\land\Phi}{\to}H^q_\delta(\mathrm{End}_0E_{r,s}^\rho\wedge^{p+1}\mathcal{O}(d)))}{\mathrm{im}(H^q_\delta(\mathrm{End}_0E_{r,s}^\rho\wedge^{p-1}\mathcal{O}(d))\overset{-\land\Phi}{\to}H^q_\delta(\mathrm{End}_0E_{r,s}^\rho\wedge^p\mathcal{O}(d)))}.$$

The terms $E_2^{p,q}$ thus encode the hypercohomology groups of the complex \ref{sequence4.1}, and fit into the exact sequence described in \cite[Proposition 3.2]{MR3285779}

\begin{align}\label{sequence4.2}
0\to E_2^{1,0}\to\mathbb{H}^1\to E_2^{0,1}\overset{d_2}{\to}E_2^{2,0}\to\mathbb{H}^2\to E_2^{1,1}\to 0.
\end{align}

The map $d_2:E_2^{p,q}\to E_2^{p+2,q-1}$ is a differential operator such that $d_2^2=0$, and understanding this map is needed in order to decipher the first hypercohomology group. For Schwarzenberger bundles, we have the following: 

$$E_2^{0,1}=\ker\left(H^1(\mathrm{End}_0E_{r,s}^\rho)\overset{-\land\Phi}{\to}H^1(\mathrm{End}_0E_{r,s}^\rho(d))\right)\subseteq H^1(\mathrm{End}_0E_{r,s}^\rho).$$

From Proposition \ref{3.4}, $H^1(\mathrm{End}_0E_{r,s}^\rho)=0$ for $s=r,r+1,r+2$, and thus, $\left(d_2:E^{0,1}_2\to E^{2,0}_2\right)\subseteq H^1(\mathrm{End}_0E_{r,s}^\rho)=0$ is the zero map. For $s=r+k$ with $k>2$, tt is not clear that $d_2=0$ as $H^1(\mathrm{End}_0E_{r,r+k}^\rho)$ is non-zero. One can show however that $d_2=0$ in this case as well (see Section 2.1 in \cite{SSR:11}). This then implies that there is a splitting 

$$0\to E^{1,0}_2\to\mathbb{H}^1\to E_2^{0,1}\to 0,$$ and so one can compute the first hypercohomology group of the complex as $\mathbb{H}^1=E^{1,0}_2\oplus E^{0,1}_2$. The two groups are 

$$E^{0,1}_2=\ker\left(H^1(\mathrm{End}_0E_{r,s}^\rho)\to H^1(\mathrm{End}_0E_{r,s}^\rho(d))\right)$$ and 

$$E^{1,0}_2=\frac{H^0(\mathrm{End}_0E_{r,s}^\rho(d))}{\mathrm{im}(H^0(\mathrm{End}_0E_{r,s}^\rho)\to H^0(\mathrm{End}_0E_{r,s}^\rho(d)))}.$$ For $s=r,r+1,r+2$, $H^1(\mathrm{End}_0E_{r,s}^\rho(d))=0$ for all $d\geq0$, and so $E^{0,1}_2$ vanishes. Thus,

$$\mathbb{H}^1\cong\frac{H^0(\mathrm{End}_0E_{r,s}^\rho(d))}{\mathrm{im}(H^0(\mathrm{End}_0E_{r,s}^\rho)\to H^0(\mathrm{End}_0E_{r,s}^\rho(d)))}$$ for these values of $s$ in which we can interpret as follows. The vector space $H^0(\mathrm{End}_0E_{r,s}^\rho(d))$ parametrizes trace-free global Higgs fields on the Schwarzenberger bundles $E_{r,s}^\rho$. The bottom image corresponds to the infinitesimal action of the automorphism group $\mathrm{Aut}(E_{r,s}^\rho)$ on these Higgs field via conjugation. Equivalently, the first hypercohomology group $\mathbb{H}^1$ of the deformation complex classifies infinitesimal deformations of stable pairs $(E_{r,s}^\rho,\Phi)$, or trace-free Higgs fields modulo conjugation by $\mathrm{Aut}(E_{r,s}^\rho)$. When the obstruction group $\mathbb{H}^2$ vanishes, the Zariski tangent space at a point $(E_{r,s}^\rho,\Phi)$ is identified with the quotient of $H^0(\mathrm{End}_0E_{r,s}^\rho(d))$ by this conjugation action.

We compute the dimension of this space in the cases $s=r$, $s=r+1$, or $s=r+2$ first then deal with the case $s=r+k$ with $k>2$ last. When $s=r+k$ for $k>2$, the group $E^{0,1}_2$ is non-zero and the first Hypercohomology group is $\mathbb{H}^1=E^{1,0}_2\oplus E^{0,1}_2$. The following known lemma is useful for these computations:

\begin{lem}\label{Lem19}
Let $V$ be a rank-two holomorphic vector bundle on $\mathbb{P}^2$. For any stable $(V,\Phi)$ with $\Phi\in H^0(\mathrm{End}V(d))$, the subspace of $H^0(\mathrm{End}V)$ consisting of endomorphisms that commute with $\Phi$ is generated by $\mathrm{Id}_V$.
\end{lem}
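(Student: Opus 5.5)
The plan is to argue by contradiction from stability, using the fact that a non-scalar endomorphism of $V$ commuting with $\Phi$ produces a $\Phi$-invariant subsheaf that is destabilizing. Let $\psi\in H^0(\mathrm{End}V)$ with $\psi\circ\Phi=\Phi\circ\psi$, where both sides are viewed as sections of $\mathrm{End}V(d)$. Choose a point $x\in\mathbb{P}^2$ and an eigenvalue $\lambda$ of $\psi_x$. Replace $\psi$ by $\psi'=\psi-\lambda\,\mathrm{Id}_V$. This still commutes with $\Phi$, and $\det\psi'\in H^0(\mathcal{O})=\mathbb{C}$ vanishes at $x$, hence vanishes identically. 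So $\psi'$ is nowhere invertible. It suffices to show $\psi'=0$, which gives $\psi=\lambda\,\mathrm{Id}_V$.

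Suppose $\psi'\neq0$. Consider the subsheaves $\ker\psi'$ and $\mathrm{im}\,\psi'$ of $V$. Since $\det\psi'=0$ and $\psi'\neq0$, both have rank one. The kernel $\ker\psi'$ is saturated and reflexive, so it is a line bundle $N$. The image $I=\mathrm{im}\,\psi'$ is a torsion-free rank-one sheaf, hence of the form $I=M\otimes\mathcal{I}_Z$ for a line bundle $M$ and a zero-dimensional subscheme $Z$.

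Both subsheaves are $\Phi$-invariant. If $\psi'(v)=0$, then $\psi'(\Phi v)=\Phi(\psi' v)=0$, so $\Phi(N)\subseteq N(d)$. Similarly, $\Phi(\psi' v)=\psi'(\Phi v)$ lies in $\mathrm{im}(\psi')(d)$, so $\Phi(I)\subseteq I(d)$. Neither is zero or all of $V$.

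Stability applied to these proper non-zero $\Phi$-invariant subsheaves gives
$$c_1(N)<\tfrac12 c_1(V),\qquad c_1(I)=c_1(M)<\tfrac12 c_1(V).$$
On the other hand, the exact sequence $0\to N\to V\to I\to 0$ gives $c_1(V)=c_1(N)+c_1(I)$. Adding the two strict inequalities yields $c_1(V)<c_1(V)$, a contradiction. Hence $\psi'=0$.

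The main point needing care is that stability in the paper is formulated for subsheaves, with the remark that checking sub-line bundles suffices. The image $I$ may fail to be saturated, so it may not itself be a sub-line bundle. I would handle this by passing to the saturation $I^{\mathrm{sat}}$, which is a line bundle, is still $\Phi$-invariant (invariance extends across codimension two), and satisfies $c_1(I^{\mathrm{sat}})\ge c_1(I)$. Applying stability to $I^{\mathrm{sat}}$ therefore gives $c_1(I)\le c_1(I^{\mathrm{sat}})<\tfrac12 c_1(V)$, which is the inequality needed above.

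Finally, in the $\mathrm{End}_0$ setting the conclusion is that the trace-free part of the commutant is zero. This is how the lemma will be used, to show that $H^0(\mathrm{End}_0E)\to H^0(\mathrm{End}_0E(d))$, $\psi\mapsto[\psi,\Phi]$, is injective.
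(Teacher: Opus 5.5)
Your proof is correct and is the standard stable-implies-simple argument that the paper defers to by citation: subtract an eigenvalue so that the constant $\det\psi'$ vanishes, note that $\ker\psi'$ and $\mathrm{im}\,\psi'$ are rank-one $\Phi$-invariant subsheaves, and add the two stability inequalities against $c_1(V)=c_1(\ker\psi')+c_1(\mathrm{im}\,\psi')$. The saturation step is unnecessary, because the paper's stability condition already applies to all $\Phi$-invariant subsheaves; also, your parenthetical reason for it is slightly off, since $I^{\mathrm{sat}}/I$ can be supported on a curve (e.g.\ when $\psi'$ is nilpotent), though the claim still holds because $I^{\mathrm{sat}}\to V\overset{\Phi}{\to}V(d)\to (V/I^{\mathrm{sat}})(d)$ factors through the torsion sheaf $I^{\mathrm{sat}}/I$ and so vanishes, the target being torsion-free.
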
 

See, for example, the proof of \cite[Proposition 7.1]{MR1085642} for a proof. We treat each case separately.

\subsection{Proofs of Theorems 4.1 and 4.2}
\subsection*{$\mathbf{s=r,r+1}$} Here, 

$$\dim\mathbb{H}^1=h^0(\mathrm{End}_0E_{r,r}^\rho(d))-\dim\left(\mathrm{im}(H^0(\mathrm{End}_0E_{r,r}^\rho)\to H^0(\mathrm{End}_0E_{r,r}^\rho(d)))\right)$$

and 

$$h^0(\mathrm{End}_0E_{r,r}^\rho(d))=\frac{1}{2}d^2+\frac{1}{2}d+4+4d+d^2=\frac{3}{2}d(d+3)+4.$$ For the image, we use Lemma \ref{Lem19}, which states that the dimension of the kernel of the map $H^0(\mathrm{End}_0E_{r,r}^\rho)\overset{\land\Phi}{\to}H^0(\mathrm{End}_0E_{r,r}^\rho(d))$ is one-dimensional, and so 

$$\dim\mathrm{im}((H^0(\mathrm{End}_0E_{r,r}^\rho)\to H^0(\mathrm{End}_0E_{r,r}^\rho(d)))=h^0(\mathrm{End}_0E_{r,r}^\rho)-1=5-1=4.$$ 

Thus, the dimension of the first hypercohomology group $\mathbb{H}^1$ of the spectral sequence when $s=r$ is 

$$\dim\mathbb{H}^1=\frac{3}{2}d(d+3).$$

Now, for $s=r+1$,

$$\dim\mathbb{H}^1=h^0(\mathrm{End}_0E_{r,r+1}^\rho(d))-\dim\left(\mathrm{im}(H^0(\mathrm{End}_0E_{r,r+1}^\rho)\to H^0(\mathrm{End}_0E_{r,r+1}^\rho(d)))\right)$$ and 

$$h^0(\mathrm{End}_0E_{r,r+1}^\rho(d))=\frac{3}{2}d(d+3)+3.$$ The kernel of $H^0(\mathrm{End}_0E_{r,r+1}^\rho)\to H^0(\mathrm{End}_0E_{r,r+1}^\rho(d))$ is one dimensional via Lemma \ref{Lem19}, and as a result

$$\dim\left(\mathrm{im}(H^0(\mathrm{End}_0E_{r,r+1}^\rho)\to H^0(\mathrm{End}_0E_{r,r+1}^\rho(d)))\right)=h^0(\mathrm{End}_0E_{r,r+1}^\rho)-1=4-1=3.$$ The dimension of the first hypercohomology group of the spectral sequence when $s=r+1$ is $\dim\mathbb{H}^1=\frac{3}{2}d(d+3)$.

Now these represent the dimension of the Zariski tangent space $T_{(E_{r,s}^\rho,\Phi)}\mathcal{M}_{c_1,c_2,d}^\mathrm{VW}$ at the point $(E_{r,s}^\rho,\Phi)$ for $s=r$ or $s=r+1$ given that the obstruction group $\mathbb{H}^2$ vanishes. From the exact sequence \ref{sequence4.2} and the fact that $d_2=0$, we have the short exact sequence 

$$0\to E_2^{2,0}\to\mathbb{H}^2\to E_2^{1,1}\to 0,$$ which states $\mathbb{H}^2\cong E_2^{2,0}\oplus E_2^{1,1}$. However, $E_2^{2,0}$ vanishes because $\wedge^2\mathcal{O}(d)=0$. Also,

$$E_2^{1,1}=\frac{H^1(\mathrm{End}_0E_{r,s}^\rho(d))}{\mathrm{im}(H^1(\mathrm{End}_0E_{r,s}^\rho)\overset{\land\Phi}{\to}H^1(\mathrm{End}_0E_{r,s}^\rho(d)))},$$ but $H^1(\mathrm{End}_0E_{r,s}^\rho(d))=0$ for $s=r$ or $s=r+1$. Thus $E_2^{1,1}=0$ and so the obstruction vanishes. This shows that the points $(E_{r,s}^\rho,\Phi)$ in $\mathcal{M}_{c_1,c_2,d}^{\mathrm{VW}}$ for $s=r$ or $s=r+1$ are smooth points with expected dimension $\frac{3}{2}d(d+3)$.

\subsection*{$\mathbf{s=r+2}$} In this case, the computations are simplified since for $s\geq r+2$ the bundles $E_{r,s}^\rho$ are simple and so automatically stable. Thus, 

$$\mathbb{H}^1=E_2^{1,0}=\frac{H^0(\mathrm{End}_0E_{r,s}^\rho(d))}{\mathrm{im}(H^0(\mathrm{End}_0E_{r,s}^\rho)\to H^0(\mathrm{End}_0E_{r,s}^\rho(d)))}=H^0(\mathrm{End}_0E_{r,s}^\rho(d)).$$  Now from Proposition \ref{Prop11}

\begin{align*}
    h^0(\mathrm{End}_0E_{r,r+2}^\rho(d)) = \frac{3}{2}d(d+3),
\end{align*}

and so $\dim \mathbb{H}^1=\frac{3}{2}d(d+3)$. In this case, $\mathbb{H}^2$ also vanishes since $H^1(\mathrm{End}_0E_{r,r+2}^\rho(d))=0$. Thus in $\mathcal{M}^{\mathrm{VW}}_{c_1,c_2,d}$ the point $(E^\rho_{r,r+2},\Phi)$ is smooth with the expected dimension of the Zariski tangent space being
$\frac{3}{2}d(d+3)$. To obtain the result in Theorem \ref{Thm16} for the case $s=r,r+1,r+2$, set $d=1$ in $\frac{3}{2}d(d+3)$.

\subsection*{$\mathbf{s=r+k} \,\ \textbf{for} \,\ \mathbf{k>2}$}

For $s>r+2$, there is still a splitting of \ref{sequence4.2} since $d_2=0$ and as such, the first and second hypercohomology groups are 

$$\mathbb{H}^1=H^0(\mathrm{End}_0E_{r,r+k}^\rho(d))\oplus\ker\left(H^1(\mathrm{End}_0E_{r,r+k}^\rho)\overset{-\land\Phi}{\to} H^1(\mathrm{End}_0E_{r,r+k}^\rho(d))\right)$$ and 

$$\mathbb{H}^2=\frac{H^1(\mathrm{End}_0E_{r,r+k}^\rho(d))}{\mathrm{im}(H^1(\mathrm{End}_0E_{r,r+k}^\rho)\overset{-\land\Phi}{\to}H^1(\mathrm{End}_0E_{r,r+k}^\rho(d)))},$$

Respectively. From Proposition \ref{3.5} we can see that for $d>1$, the obstruction $\mathbb{H}^2$ vanishes and $(E_{r,r+k}^\rho,\Phi)$ is a smooth point with expected dimension 

\begin{align*} 
\dim\mathbb{H}^1=h^0(\mathrm{End}_0E_{r,r+k}^\rho(d))+h^1(\mathrm{End}_0E_{r,r+k}^\rho) &= \frac{3}{2}d(d+3)+4-k^2+k^2-4 \\
&= \frac{3}{2}d(d+3).
\end{align*}

If $d=1$, Proposition \ref{3.5} states that $h^1(\mathrm{End}_0E_{r,r+k}^\rho(1))=k^2-9$, so it is not trivially true that $\mathbb{H}^2=0$ as the other cases. To show that the obstruction $\mathbb{H}^2$ vanishes, we use the following lemma from \cite[Lemma 2.2]{MR3285779} to compute the image of $H^1(\mathrm{End}_0E_{r,s}^\rho)\overset{-\land\Phi}\to H^1(\mathrm{End}_0E_{r,s}^\rho(1))$; 

\begin{lem}\label{lem20}
    Let $X$ be a smooth complex manifold. Let $(E,\Phi)$ be a regular Higgs bundles and $L$ a line bundle both on $X$ with $\Phi\in H^0(X,\mathrm{End}_0E\otimes L)$. Then, we have a short exact sequence
    $$0\to L^*\overset{\Phi}{\to}\mathrm{End}_0E\overset{-\land\Phi}{\to}Q\to0,$$ where $Q$ is the sheaf theoretic image of $-\land\Phi$ in $\mathrm{End}_0E\otimes L$. If one views $\Phi$ as an element of $\Gamma(\mathrm{Hom}(\mathrm{End}_0E\otimes L,L\otimes L))$, then if for some $c\in \mathbb{C}$ we have $\Phi^*=c\Phi$, then $\ker\Phi\cong Q$
\end{lem}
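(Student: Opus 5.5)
The plan is to work locally on the open set where $\Phi$ is regular and then glue. Injectivity of the first map and exactness in the middle are pointwise statements, so I will check them fibrewise on a regular locus, using a local frame of $E$ and a trivialisation of $L$. In such a frame $\Phi$ is a traceless $2\times 2$ matrix of functions, and $-\wedge\Phi$ is the adjoint action $A\mapsto[A,\Phi]$, viewed as a map $\mathrm{End}_0E\to\mathrm{End}_0E\otimes L$. The map $L^*\to\mathrm{End}_0E$ is $t\mapsto t\Phi$, where we use that $\Phi$ is a section of $\mathrm{End}_0E\otimes L$.

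\textbf{Exactness.} First, $[t\Phi,\Phi]=0$, so the composite vanishes. Second, regularity means that at each point the centraliser of $\Phi$ in $\mathfrak{sl}_2$ is one-dimensional and spanned by $\Phi$ itself, since for a traceless $2\times2$ matrix regular means nonzero. Hence $L^*\to\mathrm{End}_0E$ is injective, in fact a subbundle. Moreover, the kernel of $\mathrm{ad}_\Phi$ is exactly the line subbundle $\Phi\cdot L^*$. Exactness at the right holds by definition of $Q$ as the sheaf image. Since the kernel is a subbundle of rank one, $Q$ is locally free of rank two.

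\textbf{Identifying $Q$ with $\ker\Phi$.} Here $\Phi$ is regarded as the map $\mathrm{End}_0E\otimes L\to L\otimes L$ given by the trace pairing $B\mapsto\mathrm{tr}(B\Phi)$. I would first show that $Q\subseteq\ker\Phi$ using $\mathrm{tr}([A,\Phi]\Phi)=\mathrm{tr}(A\Phi^2-\Phi A\Phi)=0$, which is cyclicity of the trace. Next I compare ranks: the map $\mathrm{tr}(\cdot\,\Phi)$ is surjective onto $L^2$ wherever $\Phi\neq0$, because $\mathrm{tr}(\Phi\cdot\Phi)$ may vanish but the trace form is nondegenerate on $\mathfrak{sl}_2$. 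So $\ker\Phi$ is a rank-two subbundle. Then $Q\subseteq\ker\Phi$ is an inclusion of rank-two bundles, and it is an isomorphism as long as $\mathrm{ad}_\Phi$ has rank two at every point. That holds pointwise because $\Phi\neq0$ there.

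The hypothesis $\Phi^*=c\Phi$ is where I expect the subtlety. It is used to identify the transpose of $\Phi\colon L^*\to\mathrm{End}_0E$ with the pairing map, via the Killing form $\mathrm{End}_0E\cong(\mathrm{End}_0E)^*$. Under that identification, $\ker\Phi$ is the annihilator of the image line $\Phi L^*$. I would verify that the Killing-form self-duality carries the sequence to its dual up to the scalar $c$; this gives $\ker\Phi\cong Q$ as the annihilator of $\ker(\mathrm{ad}_\Phi)$. This uses that $\mathrm{ad}_\Phi$ is skew-adjoint for the trace form, so its image is the orthogonal complement of its kernel.

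The main obstacle I anticipate is points where $\Phi$ vanishes, that is, where regularity fails. There $Q$ need not be saturated. I would handle this by assuming, as the statement's ``regular Higgs bundle'' hypothesis suggests, that $\Phi$ is everywhere nonzero. Failing that, I would say the isomorphism holds away from a codimension-two set and extends by reflexivity on the surface.
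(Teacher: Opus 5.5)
Your argument is correct, and it is a genuinely different route: the paper gives no argument at all, only the pointer to Lemma 2.2 of Rayan's $\mathbb{P}^2$ paper.

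\textbf{Your route.} You prove everything fibrewise from two facts about $\mathfrak{sl}_2$. First, a nonzero element is regular with centraliser $\mathbb{C}\Phi$. Hence $\Phi\colon L^*\to\mathrm{End}_0E$ is a line subbundle equal to $\ker(\mathrm{ad}_\Phi)$, and $Q$ is a rank-two subbundle. Second, $\mathrm{ad}_\Phi$ is skew for the nondegenerate trace form. Hence its image is exactly $\Phi^\perp=\ker\bigl(\mathrm{tr}(\cdot\,\Phi)\colon \mathrm{End}_0E\otimes L\to L^2\bigr)$.

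\textbf{What this buys.} The proof is self-contained and valid in any dimension. The conclusion is sharper: $Q=\ker\Phi$ as subsheaves, not merely abstractly isomorphic. It also shows that, under your natural trace-pairing reading, the hypothesis $\Phi^*=c\Phi$ is never used. So the paper's check of that hypothesis via $h^0(\mathrm{End}_0E_{r,r+k}^\rho(1))=1$ is unnecessary. In rank two the hypothesis is automatic anyway: identifying $E^*\cong E\otimes\det E^{-1}$ via $\wedge^2$ turns the transpose of a traceless $\Phi$ into $-\Phi$. You can therefore delete your speculative paragraph about where $\Phi^*=c\Phi$ enters.

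\textbf{Three small corrections.}

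First, you need not \emph{assume} $\Phi$ is nowhere zero. The paper defines ``regular'' as having smooth spectral cover. If $\Phi(p)=0$, then $\det\Phi$ vanishes to order at least two at $p$, so the spectral surface $\eta^2+\det\Phi=0$ is singular at $(p,0)$. Regularity in the paper's sense therefore already excludes zeros of $\Phi$.

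Second, your fallback is wrong as stated. At an isolated zero $p$ of $\Phi$, the sheaf $Q\cong\mathrm{End}_0E/\Phi L^*$ has three-dimensional fibre at $p$, so it is not locally free. By contrast, $\ker\Phi$ is saturated in a locally free sheaf, hence reflexive, hence locally free on a surface. For example, with $\Phi=\bigl(\begin{smallmatrix}0&x\\ y&0\end{smallmatrix}\bigr)$ one finds $\ker\Phi/Q\cong\mathcal{O}/(x,y)$. Reflexivity only gives $Q^{**}\cong\ker\Phi$, not $Q\cong\ker\Phi$. The first point makes this moot.

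Third, state the rank-two assumption explicitly. For rank $n\geq 3$, the centraliser of a regular element of $\mathfrak{sl}_n$ has dimension $n-1$, so the first sequence is not exact in the middle. The lemma is really a rank-two statement, which matches the paper's setting.
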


\begin{proof}
This is Lemma 2.2 in \cite{MR3285779}
\end{proof}

In the lemma, a regular Higgs bundles means that the spectral cover of $\Phi$ is smooth, and $\Phi^*$ is defined as the dual element of $\Phi$ in the vector space $\Gamma(\mathrm{Hom}(E^*,E^*\otimes L))$. We use the lemma for $\Phi\in H^0(\mathrm{End}_0E_{r,s}^\rho(1))$ with $s=r+k$ for $k>2$ to show that $\mathbb{H}^2$ is still zero in this case. From the lemma, we have the short exact sequence;

\begin{align}\label{5.2}
0\to\mathcal{O}(-1)\overset{\Phi}{\to}\mathrm{End}_0E_{r,r+k}^\rho\overset{-\land\Phi}{\to}Q\to 0.
\end{align}

Since $\Phi$ and $\Phi^*$ can be both viewed as living inside $H^0(\mathrm{End}_0E_{r,r+k}^\rho(1))$, combined with the fact that $h^0(\mathrm{End}_0E_{r,r+k}^\rho(1))=1$, we have that $\Phi^*=c\Phi$. From the lemma, this implies that we can view $Q\cong\ker\Phi$ which gives us the following short exact sequence;

\begin{align}\label{5.3}
    0\to Q\to \mathrm{End}_0E_{r,r+k}^\rho(1)\overset{\Phi}{\to}\mathcal{O}(2)\to0.
\end{align}

The long exact sequence in cohomology of \ref{5.2} tells us that $H^0(Q)=0$ and $H^1(\mathrm{End}_0E_{r,r+k}^\rho)=H^1(Q)$. Combining this with the long exact sequence in cohomology from \ref{5.3}, we arrive at the following short exact sequence;

$$0\to H^0(\mathrm{End}_0E_{r,r+k}^\rho(1))\to H^0(\mathcal{O}(2))\to H^1(\mathrm{End}_0E_{r,r+k}^\rho)\overset{-\land\Phi}{\to}H^1(\mathrm{End}_0E_{r,r+k}^\rho(1))\to 0.$$ This shows that the mapping $-\land\Phi: H^1(\mathrm{End}_0E_{r,r+k}^\rho)\to H^1(\mathrm{End}_0E_{r,r+k}^\rho(1))$ is surjective. With this, going back and recalling that

$$\mathbb{H}^2=\frac{H^1(\mathrm{End}_0E_{r,r+k}^\rho(1))}{\mathrm{im}(H^1(\mathrm{End}_0E_{r,r+k}^\rho)\overset{-\land\Phi}{\to}H^1(\mathrm{End}_0E_{r,r+k}^\rho(1)))}$$ which vanishes from the above, showing that for $d=1$, the point $(E_{r,r+k}^\rho,\Phi)$ with $k>2$ is a smooth point in the moduli space with expected dimension $\mathrm{dim}\mathbb{H}^1$. Recall that in this case

$$\mathbb{H}^1=H^0(\mathrm{End}_0E_{r,r+k}^\rho(1))\oplus\ker\left(H^1(\mathrm{End}_0E_{r,r+k}^\rho)\overset{-\land\Phi}{\to} H^1(\mathrm{End}_0E_{r,r+k}^\rho(1))\right),$$ which for $k=3$, $h^1(\mathrm{End}_0E_{r,r+3}^\rho(1))=0$ and so $\dim\mathbb{H}^1=h^0(\mathrm{End}_0E_{r,r+k}^\rho(1))+5=6$.

When $k>3$, using the fact that the mapping $-\land\Phi: H^1(\mathrm{End}_0E_{r,r+k}^\rho)\to H^1(\mathrm{End}_0E_{r,r+k}^\rho(1))$ is surjective

$$\dim\ker\left(H^1(\mathrm{End}_0E_{r,r+k}^\rho)\overset{-\wedge\Phi}{\to}H^1(\mathrm{End}_0E_{r,r+k}^\rho(d))\right)=h^1(\mathrm{End}_0E_{r,sr+k}^\rho)-h^1(\mathrm{End}_0E_{r,s}^\rho(1))=5.$$

Thus, for $d=1$, with $s=r+k$ with $k>3$, the point $(E_{r,r+k}^\rho,\Phi)$ is smooth with expected dimension $\dim\mathbb{H}^1=h^0(\mathrm{End}_0E_{r,r+k}^\rho(1))+5=1+5=6$.
\qed

\section{Fixed Points}\label{Sec5}

Let $c_1,c_2\in\mathbb{Z}$ and $d\in\mathbb{Z}^{\geq0}$. We denote $\mathcal{M}^{\mathrm{VW}}_{c_1,c_2,d}$ the moduli space of stable wild Vafa-Witten pairs $(E,\Phi)$ on $\mathbb{P}^2$ with $c_1(E)=c_1\cdot H,c_2(E)=c_2\cdot H^2$ and $\Phi\in H^0(\mathbb{P}^2,\mathrm{End}_0E(d))$. The multiplicative group $\mathbb{C}^*$ acts naturally on $\mathcal{M}_{c_1,c_2,d}^{\mathrm{VW}}$ by scaling the Higgs field, that is, 

$$\Phi\mapsto c\Phi, \,\ c\in\mathbb{C}^*.$$

The fixed point locus of this action is compact, and, following Tanaka and Thomas \cite{MR4158461}, one can define a numerical invariant on $\mathcal{M}_{c_1,c_2,d}^\mathrm{VW}$ by applying virtual localization in the sense of \cite{MR1666787}. This invariant is realized as a virtual Euler class integrated over the $\mathbb{C}^*$-fixed locus, and may be interpreted heuristically as a virtual count of stable pairs in the moduli space. In particular, the global geometry of $\mathcal{M}_{c_1,c_2,d}^\mathrm{VW}$ can be studied through the structure of these fixed loci. The resulting theory extends the Donaldson-Thomas invariants of \cite{MR1634503} to the setting of wild Vafa-Witten pairs, in which the Higgs field takes values in $\mathcal{O}(d)$ for $d\geq0$. The computation of the Tanaka-Thomas invariants of $\mathcal{M}_{c_1,c_2,d}^\mathrm{VW}$ will be done in a future work; for now, we describe its $\mathbb{C}^*$-fixed locus. Concretely, let $E$ be a rank-two torsion-free sheaf on $\mathbb{P}^2$, and let $\Phi\in H^0(\mathrm{End}_0E(d))$ for some $d\geq0$. Suppose $(E,\Phi)$ is a $\mathbb{C}^*$-fixed point. Then, by Theorem \ref{Thm8}, we may decompose 

$$E=E_1\oplus E_2, \,\ \Phi=\begin{pmatrix} 0 & 0 \\ \phi & 0 \end{pmatrix},$$ where $\phi: E_1\to E_2(d)$ is a non-zero morphism. Since each $E_i$ is a rank-one torsion-free sheaf, we may write $E_i=I_{Z_i}\otimes\mathcal{O}(m_i)$, for some $m_i\in\mathbb{Z}$, where $I_{Z_i}$ denotes the ideal sheaf of a zero-dimensional subscheme $Z_i\subseteq\mathbb{P}^2$ for $i=1,2$. The nontriviality of $\phi$ requires that $Z_2\subseteq Z_1$ as subschemes of $\mathbb{P}^2$. Without loss of generality, we may assume $m_1\geq m_2$. Stability then imposes the relation $m_2=m_1-j$ for $0\leq j\leq d$. Accordingly, we can rewrite the bundle as

$$E=\mathcal{O}(m_1)\otimes(I_{Z_1}\oplus I_{Z_2}(-j))$$ and the Higgs field $\phi\in H^0(\mathrm{Hom}(I_{Z_1},I_{Z_2})\otimes\mathcal{O}(d-j))$. Let 

$$n:=\#|Z_1|+\#|Z_2|$$

denote the total length of the subschemes. A direct computation yields the Chern classes 

$$c_1(E)=2m_1-j, \,\ c_2(E)=m_1(m_1-j)+n.$$ The following proposition, which follows from \cite[Lemma 8.3]{MR4158461}, provides a classification of all possible fixed points of the $\mathbb{C}^*$-action. In what follows, we set $m_1=m$ for notational simplicity.

\begin{prop}\label{6.1}
Let $d\geq 0$, $0\leq j\leq d$, and $m\geq0$. The fixed points of the $\C^*$-action in the moduli space of stable wild Vafa-Witten pairs on $\mathbb{P}^2$ are of the form 

$$E=\mathcal{O}(m)\otimes (I_{Z_1}\oplus I_{Z_2}(-j)), \,\ \Phi=\begin{pmatrix} 0 & 0 \\ \iota\otimes s & 0 \end{pmatrix}$$ where $\iota: I_{Z_1} \hookrightarrow{} I_{Z_2} $ is the inclusion and $s\in H^0(\mathcal{O}(d-j))$.
   
\end{prop}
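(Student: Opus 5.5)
Starting from the description already obtained above via Theorem \ref{Thm8}, the plan is to show that the single component $\phi$ of a $\mathbb{C}^*$-fixed Higgs field must be an inclusion of ideal sheaves twisted by a section of $\mathcal{O}(d-j)$. For a fixed point we already have $E=\mathcal{O}(m)\otimes(I_{Z_1}\oplus I_{Z_2}(-j))$ with $\Phi$ strictly lower triangular, whose only entry is a nonzero $\phi\in \mathrm{Hom}(I_{Z_1},I_{Z_2}(d-j))$. The main task is therefore to compute this $\mathrm{Hom}$ group and show that every nonzero element has the form $\iota\otimes s$.

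\textbf{Step 1: compute the Hom group.} Since $Z_1$ and $Z_2$ are zero-dimensional, $I_{Z_2}$ is reflexive away from codimension two. The natural approach is to restrict to $U=\mathbb{P}^2\setminus(Z_1\cup Z_2)$. On $U$ the map $\phi$ is a section of $\mathcal{O}(d-j)$. By Hartogs this section extends uniquely to some $s\in H^0(\mathcal{O}(d-j))$, because $Z_1\cup Z_2$ has codimension two. Consequently
$$\mathrm{Hom}(I_{Z_1},I_{Z_2}(d-j))\subseteq \mathrm{Hom}(I_{Z_1},\mathcal{O}(d-j))=H^0(\mathcal{O}(d-j)),$$
where the last equality holds because $I_{Z_1}^{\vee}=\mathcal{O}$. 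Every morphism is then multiplication by some $s$. It is nonzero precisely when $s\neq 0$, and it lands in $I_{Z_2}(d-j)$ exactly when $s\cdot I_{Z_1}\subseteq I_{Z_2}(d-j)$.

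\textbf{Step 2: use stability to force $Z_2\subseteq Z_1$.} This is the main obstacle. Step 1 only shows that $sI_{Z_1}\subseteq I_{Z_2}$, and a priori this could hold because $s$ vanishes on $Z_2$ rather than because $I_{Z_1}\subseteq I_{Z_2}$. Following \cite[Lemma 8.3]{MR4158461}, I would first note that in the fixed-point setting the morphism $\phi$ factors as $I_{Z_1}\to I_{Z_2}\otimes\mathcal{O}(d-j)$. I would then argue that, after choosing the decomposition so that the zero scheme of $s$ is absorbed into the data, the map can be written as the inclusion $\iota$ composed with multiplication by $s$. This is where the containment $Z_2\subseteq Z_1$, noted above as required for nontriviality, is used. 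If this identification is delicate, I would fall back on the statement that $\phi=s\cdot\mathrm{incl}$ as a map of subsheaves of $\mathcal{O}(d-j)$, which is exactly $\iota\otimes s$ whenever the inclusion exists.

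\textbf{Step 3: converse.} I would verify that each pair of the stated form is a stable fixed point. It is fixed under the $\mathbb{C}^*$-action because conjugating by the automorphism $\mathrm{diag}(1,c)$ rescales $\Phi$ by $c$. For stability, the only $\Phi$-invariant rank-one subsheaves are those contained in $\ker\Phi$, namely subsheaves of $\mathcal{O}(m)\otimes I_{Z_2}(-j)$. These have degree at most $m-j$, and $m-j<m-j/2$ when $j>0$. The case $j=0$ requires checking that the invariant subsheaf has degree strictly below $c_1/2=m$, so that it is still destabilizing-free, and this should use $s\neq0$. Finally, the inequality $0\le j\le d$ comes from Proposition \ref{2.1}-type reasoning: if $j>d$, then $H^0(\mathcal{O}(d-j))=0$, which forces $\phi=0$, a contradiction.
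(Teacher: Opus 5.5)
Your Step 1 is the paper's argument, done more directly. The paper applies $\mathrm{Hom}(I_{Z_1},-)$ to $0\to I_{Z_2}\to\mathcal{O}\to\mathcal{O}/I_{Z_2}\to0$ to get $h^0(\mathrm{Hom}(I_{Z_1},I_{Z_2})(d-j))\le h^0(\mathcal{O}(d-j))$. Then, taking $Z_2\subseteq Z_1$ for granted, it notes that the maps $\iota\otimes s$ already fill this space. Your Hartogs argument gives the same embedding into $H^0(\mathcal{O}(d-j))$ but identifies each $\phi$ directly as multiplication by a section $s$, so no dimension count is needed. Once $I_{Z_1}\subseteq I_{Z_2}$ is known, multiplication by $s$ is literally $\iota\otimes s$ and the proof is finished.

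\textbf{The gap is Step 2, and your fix cannot work.} The obstacle you flag there is real. Neither $\phi\neq0$ nor stability forces $Z_2\subseteq Z_1$, and there is nothing to ``absorb'', since $Z_1$ and $Z_2$ are determined by $E$. What Step 1 actually proves is $s\cdot I_{Z_1}\subseteq I_{Z_2}(d-j)$, i.e.\ $I_{Z_1}(-C)\subseteq I_{Z_2}$ with $C=\{s=0\}$. This is the nested-Hilbert-scheme condition in the Tanaka--Thomas description, and it is genuinely weaker than $Z_2\subseteq Z_1$ when $j<d$.

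For a concrete counterexample, take $d=2$, $j=1$, $m=0$, $Z_1=\varnothing$, $Z_2=\{p\}$, and $0\neq s\in H^0(\mathcal{O}(1))$ vanishing at $p$. Then the pair $E=\mathcal{O}\oplus I_p(-1)$, $\phi=s\colon\mathcal{O}\to I_p(1)$ is:
\begin{itemize}
\item $\mathbb{C}^*$-fixed, by conjugating with $\mathrm{diag}(1,c)$;
\item stable: for a $\Phi$-invariant rank-one $F$, the restriction $\Phi|_F\in H^0(\mathcal{O}(2))$ squares to zero, so $F\subseteq\ker\Phi=I_p(-1)$, which has degree $-1<-\tfrac12$.
\end{itemize}
Yet $\mathrm{Hom}(\mathcal{O},I_p)=0$, so no inclusion $\iota$ exists.

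So $Z_2\subseteq Z_1$ is an extra hypothesis. It is automatic only when $j=d$, where $s$ is a nonzero constant. The paper's proof imports it from the unproved sentence before the proposition (``the nontriviality of $\phi$ requires $Z_2\subseteq Z_1$''). Your argument inherits the same gap at the same point, and should state it as an assumption rather than claim to derive it from stability.

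Separately, in Step 3 the case $j=0$ cannot be rescued by $s\neq0$. There $\ker\Phi=I_{Z_2}(m)$ is $\Phi$-invariant of degree $m=c_1/2$, so such pairs are never slope-stable. Stable fixed points with $\Phi\neq0$ therefore require $0<j\le d$.
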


\begin{proof}
We already demonstrated how we can write our bundle as $$E=\mathcal{O}(m)\otimes (I_{Z_1}\oplus I_{Z_2}(-j)).$$ We show that we can think of $\phi\in H^0(\mathrm{Hom}(I_{Z_1},I_{Z_2})\otimes\mathcal{O}(d-j))$ globally as $\iota\otimes s$ for $\iota: I_{Z_1}\hookrightarrow\ I_{Z_2}$ the inclusion and $s\in H^0(\mathcal{O}(d-j))$. Consider the short exact sequence for the ideal $I_{Z_2}$, 

$$0\to I_{Z_2}\to\mathcal{O}\to\mathcal{O}/I_{Z_2}\to 0.$$ Applying the functor $\mathrm{Hom}(I_{Z_1},\cdot)$ gives 

$$0\to \mathrm{Hom}(I_{Z_1},I_{Z_2})\to\mathrm{Hom}(I_{Z_1},\mathcal{O})\to Q\to 0$$ where $Q$ is the quotient sheaf. Tensoring by $\mathcal{O}(d-j)$, which preserves exactness, we arrive at

$$0\to \mathrm{Hom}(I_{Z_1},I_{Z_2})\otimes\mathcal{O}(d-j)\to\mathrm{Hom}(I_{Z_1},\mathcal{O})\otimes\mathcal{O}(d-j)\to Q\otimes\mathcal{O}(d-j)\to0.$$ From the long exact sequence in cohomology we arrive at the fact that $$h^0(\mathrm{Hom}(I_{Z_1},I_{Z_2})\otimes\mathcal{O}(d-j))\leq h^0(\mathrm{Hom}(I_{Z_1},\mathcal{O})\otimes\mathcal{O}(d-j)),$$ in which we know that $h^0(\mathrm{Hom}(I_{Z_1},\mathcal{O})\otimes\mathcal{O}(d-j))=h^0(\mathcal{O}(d-j))$. Since $Z_2\subset Z_1$ we know that $\mathrm{Hom}(I_{Z_1},I_{Z_2})$ contains at least one global element, namely the inclusion $\iota: I_{Z_1} \hookrightarrow I_{Z_2}$. In particular, given $s\in H^0(\mathcal{O}(d-j))$, the product $\iota\otimes s\in H^0(\mathrm{Hom}(I_{Z_1},I_{Z_2})\otimes\mathcal{O}(d-j))$. Since we have an $h^0(\mathcal{O}(d-j))$ worth of $s$ to pick from and that $$h^0(\mathrm{Hom}(I_{Z_1},I_{Z_2})\otimes\mathcal{O}(d-j))\leq h^0(\mathcal{O}(d-j)),$$ we must have that all elements of $H^0(\mathrm{Hom}(I_{Z_1},I_{Z_2})\otimes\mathcal{O}(d-j))$ are of the form $\iota\otimes s$ for some $s\in H^0(\mathcal{O}(d-j))$. 

\end{proof}

We illustrate this with a few examples based on the total number of points in the subschemes.

\begin{exmp} 
$n=0$: When $\#|Z_1|+\#|Z_2|=0$, we are in the situation where $Z_1=Z_2=\varnothing$ are both empty as sets. This forces $E=\mathcal{O}(m)\oplus \mathcal{O}(m-j)$ and $\phi:\mathcal{O}(m)\to\mathcal{O}(m-j+d)$ or $\phi\in H^0(\mathcal{O}(d-j))$. 
\end{exmp}

\begin{exmp}
$n=1$: When $\#|Z_1|+\#|Z_2|=1$, since we have $Z_2\subseteq Z_1$, we are in the situation where $Z_1=\{p\}$ and $Z_2=\varnothing$. In this case, the bundle is $$E=\mathcal{O}(m)\otimes (I_p\oplus\mathcal{O}(-j))$$ where $I_p$ is the ideal sheaf supported at a point $p\in\mathbb{P}^2$. The Higgs field is $\phi\in\mathrm{Hom}(I_p(m),\mathcal{O}(m-j+d))$ which we can interpreted as $\phi=\iota_{I_p}\otimes s$ where $s\in H^0(\mathcal{O}(d-j))$ and $\iota:I_p\hookrightarrow \mathcal{O}$ is the inclusion.
\end{exmp}

\begin{exmp}
$n=2$: When $\#|Z_1|+\#|Z_2|=2$ we have two cases. Either $Z_1=\{p_1,p_2\}$ and $Z_2=\varnothing$ or $Z_1=Z_2=\{p\}$. In the former, the bundle is $$E=\mathcal{O}(m)\otimes(I_{Z_1}\oplus\mathcal{O}(-j))$$ and the Higgs field is $\phi\in\mathrm{Hom}(I_{Z_1}(m),\mathcal{O}(m-j+d))$ which can be interpreted as $\phi=\iota_{I_{Z_1}}\otimes s$ with $s\in H^0(\mathcal{O}(d-j))$ where $\iota_{Z_1}:Z_1\hookrightarrow\mathcal{O}$ is the inclusion. In the latter case when $Z_1=Z_2=\{p\}$, the bundle is $$E=\mathcal{O}(m)\otimes(I_p\oplus I_p(-j)).$$ Again, $I_p$ is the ideal sheaf supported on a point $p\in\mathbb{P}^2$. The Higgs field is a section  

$$\phi\in\mathrm{Hom}(I_p(m),I_p(m-j+d))$$ 

which can be interpreted as $\phi=\mathrm{Id}_{I_p}\otimes s$ with $s\in H^0(\mathcal{O}(d-j))$ (here $\iota:I_p \to I_p$ is the identity). 
\end{exmp}

When trying to compute the Tanaka-Thomas invariants for the moduli space of stable wild pairs on $\mathbb{P}^2$, it's better to compute the fixed points by first fixing a Chern class, usually the second Chern class. We demonstrate by showing two examples by computing fixed points for low Chern classes.

\begin{exmp}
When $c_2(E)\leq0$, we have that $m(m-j)+\ell\leq0$ or $\ell\leq-m(m-j)$. Since $\ell\geq0$, we must have $m(m-j)\leq0$. So either $m=0$ or $m=j$ with $\ell=0$ or $\ell=-m(m-j)$ for $0<m< j$. In the former, $E=\mathcal{O}\oplus \mathcal{O}(-j)$ for $m=0$ or $E=\mathcal{O}(j)\oplus\mathcal{O}$ for $m=j$ (the two are equivalent up to tensoring by the line bundle $\mathcal{O}(\pm j)$).
\end{exmp}

\begin{rem} 
When looking to compute the Tanaka-Thomas invariants for $c_2(E)<0$, when $d=1$ the component of the $\mathbb{C}^*$-fixed locus is empty. Since there are no stable pairs with $\phi\neq0$ and $c_2(E)<0$. Also, for $\phi=0$, there are no stable bundles with $c_2(E)<0$. However, when we increase $d$, say $d=2$, there will be contributions when $\phi\neq 0$. In general, for $c_2(E)<0$ and $d\geq2$, the fixed locus will be non-empty.
\end{rem}

\begin{exmp}
When $c_2(E)=1$, we have that $m(m-j)+\ell =1$ or $1-m(m-j)=\ell$. Since $\ell\geq 0$, this implies $m(m-j) \leq 1$. Again, we could have $m=0$ or $m=j$ in which $\ell=1$ in both cases. We already examined the case when $\ell=1$, this corresponds to when $E=\mathcal{O}(m)\otimes (I_p\oplus\mathcal{O}(-j))$ with $p\in\mathbb{P}^2$ a point. For $m=0$ we have $E=I_p\oplus \mathcal{O}(-j)$ and for $m=j$ we have $E=I_p(j)\oplus\mathcal{O}$ (again these are equivalent up to tensoring by a line bundle). For $m\neq 0,j$ we would require $0<m<j$ in which $\ell=1-m(m-j)$ and one can compute what form the bundle $E$ would take for specific values of $d,j$ and $m$.
\end{exmp}

\bibliography{biblio}
\bibliographystyle{acm}

\end{document}